\let\frak\mathfrak
\def\>{\relax\ifmmode\mskip.666667\thinmuskip\relax\else\kern.111111em\fi}
\def\<{\relax\ifmmode\mskip-.333333\thinmuskip\relax\else\kern-.0555556em\fi}
\def\vsk#1>{\vskip#1\baselineskip}
\def\vv#1>{\vadjust{\vsk#1>}\ignorespaces}
\def\vvn#1>{\vadjust{\nobreak\vsk#1>\nobreak}\ignorespaces}
  \let\ssize\scriptstyle
\let\sssize\scriptscriptstyle
\let\Medskip\medskip
\def\medskip{\par\Medskip}
\let\Bigskip\bigskip
\def\bigskip{\par\Bigskip}
\let\Maketitle\maketitle
\def\maketitle{\Maketitle\thispagestyle{empty}\let\maketitle\empty}
\newtheorem{thm}{Theorem}[section]
\newtheorem{cor}[thm]{Corollary}
\newtheorem{lem}[thm]{Lemma}
\newtheorem{defn}[thm]{Definition}
\theoremstyle{definition}                                  
\numberwithin{equation}{section}
\theoremstyle{definition}
\newtheorem*{rem}{Remark}
\newtheorem*{example}{Example}
\let\mc\mathcal
\let\nc\newcommand
\let\dl\delta
\let\Dl\Delta
\let\ka\kappa
\let\la\lambda
\let\La\Lambda
\let\phi\varphi
\let\si\sigma
\let\Om\Omega
\let\der\partial
\let\ox\otimes
\let\ge\geqslant
\let\geq\geqslant
\let\le\leqslant
\let\leq\leqslant
\let\on\operatorname
\let\bi\bibitem
\let\bs\boldsymbol
\def\C{{\mathbb C}}
\def\Z{{\mathbb Z}}
\def\R{{\mathbb R}}
\def\F{{\mathbb F}}   % new Dec 2019
\def\+#1{^{\{#1\}}}
\def\beq{\begin{equation}}
\def\eeq{\end{equation}}
\def\be{\begin{equation*}}
\def\ee{\end{equation*}}
\nc{\bea}{\begin{eqnarray*}}
\nc{\eea}{\end{eqnarray*}}
\nc{\bean}{\begin{eqnarray}}
\nc{\eean}{\end{eqnarray}}
\def\g{{\mathfrak g}}
\let\Ga\Gamma
\nc{\Il}{{\mc I_{\bs\la}}}
\nc{\bla}{{\bs\la}}
\nc{\Fla}{\F_\bla}
\nc{\tfl}{{T^*\Fla}}
\nc{\GL}{{GL_n(\C)}}
\nc{\GLC}{{GL_n(\C)\times\C^*}}
\let\sd s %% \def\sd{\dot s}
\def\ddk_#1{\kk_{#1}\<\>\frac\der{\der\<\>\kk_{#1}}}
\def\bul{\mathbin{\raise.2ex\hbox{$\sssize\bullet$}}}
\def\intt{\mathchoice
{\mathop{\raise.2ex\rlap{$\,\,\ssize\backslash$}{\intop}}\nolimits}
{\mathop{\raise.3ex\rlap{$\,\sssize\backslash$}{\intop}}\nolimits}
{\mathop{\raise.1ex\rlap{$\sssize\>\backslash$}{\intop}}\nolimits}
{\mathop{\rlap{$\sssize\<\>\backslash$}{\intop}}\nolimits}}
\let\kk q %% Q
\let\cc c
\let\Ko K
\def\GZ/{Gelfand-Zetlin}
\def\KZ/{{\slshape KZ\/}}
\def\qKZ/{{\slshape qKZ\/}}
\def\XXX/{{\slshape XXX\/}}
\nc{\A}{{\mc A}}
\def\sll{{\frak{sl}}}
\def\Q{{\mathbb Q}}
\nc{\hsl}{\widehat{{\frak{sl}_2}}}
\nc{\BC}{{ \mathbb C}}
\nc{\lra}{\longrightarrow}
\nc{\CO}{{\mathcal{O}}}
\nc{\BZ}{{ \mathbb Z}}
\nc{\hfn}{\hat{\frak{n}}}
\nc\Zs{{\Z/p^s\Z}}
\nc\Zo{{\Zs[z]^0}}
\nc\gr{{\on{gr}}}
\nc\fD{{\frak D}}
\newcommand{\Cf}{\operatorname{Coeff}}
\begin{document}

\hrule width0pt
\vsk->

\title[Dwork--type congruences and $p$-adic \KZ/ connection]
{Dwork--type congruences and $p$-adic \KZ/ connection}

\author[Alexander Varchenko]
{Alexander Varchenko}

\maketitle

\begin{center}
{\it $^{\star}$ Department of Mathematics, University
of North Carolina at Chapel Hill\\ Chapel Hill, NC 27599-3250, USA\/}

%\vsk.5>
%{\it $^{ \star}$ Faculty of Mathematics and Mechanics, Lomonosov Moscow State
%University\\ Leninskiye Gory 1, 119991 Moscow GSP-1, Russia\/}

%\vsk.5>
% {\it $^{ \star}$ Moscow Center of Fundamental and Applied Mathematics
%\\ Leninskiye Gory 1, 119991 Moscow GSP-1, Russia\/}

\end{center}

\vsk>
{\leftskip3pc \rightskip\leftskip \parindent0pt \Small
{\it Key words\/}:  \KZ/ equations; Dwork--type congruences;  Hasse--Witt matrices.

\vsk.6>
{\it 2020 Mathematics Subject Classification\/}: 11D79 (12H25, 32G34, 33C05, 33E30)
\par}
% 11A07 Congruences; primitive roots; residue systems
% 11B65 Binomial coefficients; factorials; q-identities
% 11D79 Congruences in many variables
% 12H25 p-adic differential equations
% 32G34 Moduli and deformations for ordinary differential equations (e.g., Knizhnik--Zamolodchikov equation)
% 33C05 Classical hypergeometric functions, 2F1
% 33C60 Hypergeometric integrals and functions defined by them (E, G, H and I functions)
% 33C70 Other hypergeometric functions and integrals in several variables
% 33C75 Elliptic integrals as hypergeometric functions
% 33E05 Elliptic functions and integrals
% 33E30 Other functions coming from differential, difference and integral equations
% 33E50 Special functions in characteristic p (gamma functions, etc.)

{\let\thefootnote\relax
\footnotetext{\vsk-.8>\noindent
$^\star\<${\sl E\>-mail}:\enspace anv@email.unc.edu,
supported in part by NSF grant DMS-1954266
}}

\begin{abstract}
We show that the  $p$-adic \KZ/ connection associated with the family of curves
$y^q=(t-z_1)\dots (t-z_{qg+1})$ has an invariant  subbundle of rank $g$, while
the corresponding complex \KZ/ connection has no  nontrivial proper subbundles due to the 
irreducibility  of its monodromy representation.
The construction of the invariant subbundle is based on new Dwork--type  congruences for
associated Hasse--Witt matrices.

\end{abstract}

{\small\tableofcontents\par}

\setcounter{footnote}{0}
\renewcommand{\thefootnote}{\arabic{footnote}}

\section{Introduction}

 The Knizhnik--Zamolodchikov (\KZ/) differential equations  are objects 
of conformal field theory, representation theory, enumerative geometry, see for example \cite{KZ, Dr, EFK, MO, V2}.
The solutions of the \KZ/ equations have the form of multidimensional hypergeometric functions,
see \cite{SV1}. In this paper we discuss the analog of hypergeometric solutions of the KZ equations considered over a $p$-adic field
instead of the field of complex numbers.

\vsk.2>

More precisely, we consider the  \KZ/ equations in the special case, in which the complex hypergeometric
solutions are given by the integrals of the form
\bean
\label{Iq}
I(z_1,\dots,z_{qg+1})=
\int_C\frac{R(t,z_1,\dots, z_{qg+1})\, dt}
{\sqrt[1/q]{(t-z_1)\dots(t-z_{qg+1})}}
\eean
where $q, g$ are positive integer parameters, and  $R(t,z)$ are suitable rational functions.
 
In this case the space of solutions of the \KZ/ equations is a $qg$-dimensional complex vector space.
We also consider the $p$-adic version of the same differential
equations.  We 
assume that $q$ is a prime number (that is a technical assumption) 
and show that the $qg$-dimensional space of
 local solutions of these $p$-adic 
\KZ/ equations has a remarkable $g$-dimensional subspace of
 solutions which can be $p$-adic analytically
continued as a subspace to a large domain $\frak D_{\on{KZ}}^{(m),o}$
in the space where the \KZ/ equations are defined,   see Theorems \ref{thm inv} and \ref{thm rk g} for precise statements.
This $g$-dimensional global subspace of solutions is defined  as the
 uniform $p$-adic limit of a
$g$-dimensional space of polynomial solutions of these \KZ/ equations 
modulo $p^s$ as $s\to\infty$. For $q=2$ and $g=1$
this construction was deduced in \cite{V5} from the classical  B.\,Dwork's paper \cite{Dw}, 
see also \cite{VZ1}. For $q=2$ and any $g$ the corresponding construction was developed in \cite{VZ2}.

\vsk.2>

In \cite{SV2} general \KZ/  equations were considered over the field $\F_p$ and their polynomial solutions
were constructed as $p$-approximations of hypergeometric integrals.
 In the current paper that construction is modified to obtain 
 polynomial solutions modulo $p^s$  of the \KZ/ equations related to the integrals in formula \eqref{Iq}.
The polynomial solutions are vectors of polynomials with integer coefficients. We call them
the  $p^s$-hypergeometric solutions.  
While the complex analytic  integrals in \eqref{Iq} give the whole $qg$-dimensional space of all solutions of the 
complex \KZ/ equations, the $p^s$-hypergeometric  solutions span only a $g$-dimensional subspace.
Then the $p$-adic limit of that
subspace as $s\to\infty$ gives the desired globally defined subspace of solutions.

On other $p$-approximations of hypergeometric periods see \cite{SV2, RV1, RV2, VZ1, VZ2}.

\vsk.2>

In order to prove Theorems \ref{thm inv} and \ref{thm rk g} we develop 
new matrix Dwork--type congruences in Section \ref{sec 2}. 
In Section \ref{sec 3} we show  how our Dwork--type congruences imply the uniform
$p$-adic  convergence of certain sequences of matrices on suitable domains of the space of their parameters.
In Section \ref{sec 4}
we define our \KZ/ equations and construct their complex holomorphic solutions.
In Section \ref{sec 5} we describe the $p^s$-hypergeometric solutions of the same equations.
In Section \ref{sec 6} we formulate and prove the main Theorems \ref{thm inv} and \ref{thm rk g}.

\vsk.2>

This paper may be viewed as a continuation of the paper \cite{VZ2} where the case $q=2$ is developed.

\medskip

The author thanks
 Louis Funar, Toshitake Kohno, Nick Salter, and 
 Wadim Zudilin for useful discussions.
 The author thanks  Max Planck Institute for Mathematics in Bonn for hospitality in May-June of 2022.

\section{Dwork--type congruences}
\label{sec 2}

The Dwork--type congruences were originated by B.\,Dwork in the classical paper \cite{Dw}.
On Dwork--type congruences see for example
 \cite{Dw, Me, MV, Vl, VZ1, VZ2}.

\vsk.2>

In this paper $p$ is an odd prime.  We denote by $\Z_p[w^{\pm1}]$ the ring of
 Laurent polynomials in variables $w$ with coefficients in $\Z_p$.
A congruence $F(w)\equiv G(w)\pmod{p^s}$ for two Laurent polynomials from the ring is understood 
as the divisibility by $p^s$ of all coefficients of $F(w)-\nobreak G(w)$.

\vsk.2>

For a Laurent polynomial $G(w)$ we define $\si(G(w))=G(w^p)$.

\vsk.2>

We denote $x=(t,z)$, where $t=(t_1,\dots,t_r)$ and $z=(z_1,\dots,z_n)$ are two groups of variables.

\subsection{Definition of ghosts}

Let $ {\bf e}=(e_1, \dots, e_{l})$ be a tuple of positive integers and  $\La=(\La_0(x),\La_1(x), \dots, \La_l(x))$ 
a tuple of Laurent polynomials in $\Z_p[x^{\pm1}]$.

Define $V_0(x)=\La_0(x)$. For $s=1,\dots,l$, define 
$V_s(x)$ 
by the recursive formula
\bean
\label{dls+}
&&
\La_0(x)\La_1(x)^{p^{e_1}}\dots \La_s(x)^{p^{e_1+\dots+e_s}}
=V_s(x) + V_{s-1}(x) \La_s(x^{p^{e_1+\dots+e_s}}) +
\\
&&
\notag
\phantom{aaa}
+ V_{s-2}(x) \La_{s-1}(x^{p^{e_1+\dots+e_{s-1}}})
\La_{s}(x^{p^{e_1+\dots+e_{s-1}}})^{p^{e_s}} + \dots +
\\
&&
\notag
\phantom{aaaaaa}
+ V_{0}(x) \La_{1}(x^{p^{e_1}})\La_{2}(x^{p^{e_1}})^{p^{e_2}}\cdots
\La_{s}(x^{p^{e_1}})^{p^{e_2+\dots+e_s}},
\eean
The Laurent polynomials $V_0(x), \dots, V_l(x)  \in \Z_p[x^{\pm1}]$ are called
 the {\it ghosts} associated with the tuples ${\bf e}$ and $\La$.

\vsk.2>

For every $0\leq j\leq s\leq l$, denote
\bea
W_s(x)
&:=&
\La_0(x)\La_1(x)^{p^{e_1}}\cdots \La_s(x)^{p^{e_1+\dots+e_s}},
\\
W_s^{(j)}(x)
&:=&
\La_j(x)\La_{j+1}(x)^{p^{e_{j+1}}}\cdots 
\La_s(x)^{p^{e_{j+1}+\dots +e_s}}.
\eea
Then \eqref{dls+} can be formulates as
\begin{equation}
\label{dls}
W_s(x)
=V_s(x) + \sum_{j=1}^sV_{j-1}(x)W_s^{(j)}(x^{p^{e_{1}+\dots +e_j}}),
\end{equation}
or as 
\begin{equation}
\label{dls-}
W_s(x)
=V_s(x) + \sum_{j=1}^sV_{j-1}(x)\si^{e_{1}+\dots +e_j}(W_s^{(j)}(x)).
\end{equation}

\begin{lem}
\label{lem dl}

For $s=0,1,\dots,l$, we have $V_s(x) \equiv 0 \pmod{p^{s}}$.
\end{lem}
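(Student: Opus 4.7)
The plan is to induct on $s$. The base case $s=0$ is trivial, since $V_0(x) = \Lambda_0(x)$ and congruence modulo $p^{0}=1$ is vacuous.

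For the inductive step, my first task is to reshape \eqref{dls-} into a recursion for $V_s(x)$ that involves only $V_0,\dots,V_{s-1}$ and the single polynomial $\Lambda_s$. Abbreviate $E_j := e_1+\dots+e_j$ (so $E_0 = 0$) and observe the factorizations $W_s(x) = W_{s-1}(x)\,\Lambda_s(x)^{p^{E_s}}$ and $W_s^{(j)}(x) = W_{s-1}^{(j)}(x)\,\Lambda_s(x)^{p^{E_s-E_j}}$ for $j \leq s-1$. Multiplying the $(s-1)$-th instance of \eqref{dls-} by $\Lambda_s(x)^{p^{E_s}}$ to obtain another expression for $W_s(x)$, and comparing with the $s$-th instance, I expect to arrive, after rearrangement and with the convention $W_{s-1}^{(s)}(x) := 1$, at the single-sum identity
\begin{equation*}
V_s(x) \;=\; \sum_{j=1}^{s}\, V_{j-1}(x)\,\sigma^{E_j}\!\bigl(W_{s-1}^{(j)}(x)\bigr)\,
\Bigl[\Lambda_s(x)^{p^{E_s}} - \Lambda_s(x^{p^{E_j}})^{p^{E_s-E_j}}\Bigr].
\end{equation*}

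Next, I will estimate the $p$-adic valuation of each summand. The factor $V_{j-1}(x)$ is divisible by $p^{\,j-1}$ by the inductive hypothesis. For the bracket, Fermat's little theorem gives $\Lambda_s(x)^{p^{E_j}} \equiv \Lambda_s(x^{p^{E_j}}) \pmod{p}$, so $\Lambda_s(x)^{p^{E_j}} = \Lambda_s(x^{p^{E_j}}) + p\,R(x)$ for some $R \in \Z_p[x^{\pm 1}]$. Raising this equality to the $p^{E_s-E_j}$-th power and invoking the standard lifting lemma $(A+pB)^{p^{k}} \equiv A^{p^{k}} \pmod{p^{\,k+1}}$ with $k := E_s-E_j$, the bracket turns out to be divisible by $p^{\,E_s-E_j+1}$; the degenerate case $j=s$ (where $k=0$) reduces to the Fermat congruence itself.

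Combining these two bounds, the $j$-th summand has $p$-adic valuation at least $(j-1) + (E_s-E_j+1) = j + (e_{j+1}+\dots+e_s)$. Since every $e_i \geq 1$, this quantity is at least $j + (s-j) = s$, giving $V_s(x) \equiv 0 \pmod{p^{s}}$. The main obstacle I anticipate is the algebraic bookkeeping that produces the clean one-step identity displayed above; once that identity is written down, the valuation estimate follows mechanically from Fermat's little theorem and the lifting lemma.
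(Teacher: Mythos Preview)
Your proof is correct and follows essentially the same approach as the paper's: both combine the $s$-th and $(s-1)$-th instances of the defining recursion via $W_s = W_{s-1}\cdot\Lambda_s^{p^{E_s}}$, then use the inductive hypothesis $V_{j-1}\equiv 0\pmod{p^{j-1}}$ together with the congruence $\Lambda_s(x)^{p^{E_s}}\equiv \Lambda_s(x^{p^{E_j}})^{p^{E_s-E_j}}\pmod{p^{E_s-E_j+1}}$ and the bound $e_{j+1}+\dots+e_s\ge s-j$. The only difference is organizational: you first derive the exact identity for $V_s$ and then estimate each summand, whereas the paper works modulo $p^s$ from the start, replacing $\Lambda_s(x^{p^{E_j}})^{p^{E_s-E_j}}$ by $\Lambda_s(x)^{p^{E_s}}$ and then factoring to invoke the $(s-1)$-th recursion directly.
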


\begin{proof} 
In the proof we use the congruence $F(x^{p})^{p^{i-1}}\equiv F(x)^{p^{i}}\pmod{p^{i}}$ valid for $i> 0$.

For $s=0$ we have $V_0(x)=\La_0(x)$ and no requirements on divisibility.
For $s=1$, we have 
\bea
V_1(x) = \La_0(x)\La_1(x)^{p^{e_1}} -V_0(x)\La_1(x^{p^{e_1}})
= \La_0(x)(\La_1(x)^{p^{e_1}} -\La_1(x^{p^{e_1}})) ,
\eea
and 
\bean
\label{L1}
&&
\\
&&
\notag
\La_1(x^{ p^{e_1}}) \stackrel{\pmod{p^{}}}{\equiv} \La_1(x^{p^{e_1-1}})^p 
\stackrel{\pmod{p^{2}}}{\equiv} \La_1(x^{p^{e_1-2}})^{p^2} 
\stackrel{\pmod{p^{3}}}{\equiv}    \dots 
\stackrel{\pmod{p^{e_1}}}{\equiv} \La_1(x)^{p^{e_1}}.
\eean
This proves the lemma for $s=1$.

For $s>1$ the proof is by induction on $s$. Assume that the lemma is proved for all $j<s$. Then
similarly to \eqref{L1} we obtain
$\La_s(x^{p^{e_1+\dots+e_j}})^{p^{e_{j+1}+\dots+e_s}}{\equiv}
\La_s(x)^{p^{e_{1}+\dots+e_s}} \pmod{p^{1+e_{j+1}+\dots+e_s}}$
and hence
\bea
V_{j-1}(x)\La_s(x^{p^{e_1+\dots+e_j}})^{p^{e_{j+1}+\dots+e_s}} 
&\equiv&
V_{j-1}(x) \La_s(x)^{p^{e_1+\dots+e_s}} \pmod{p^{j+e_{j+1}+\dots+e_s}}
\\
&\equiv&
V_{j-1}(x) \La_s(x)^{p^{e_1+\dots+e_s}} \pmod{p^{s}}
\eea
since $e_i\geq 1$ for all $i$. Then we  deduce modulo $p^{s}$:
\begin{align*}
V_s(x)
&=W_{s-1}(x)\La_s(x)^{p^{e_{1}+\dots +e_s}}
-\sum_{j=1}^{s-1}V_{j-1}(x) W_{s-1}^{(j)}(x^{p^{e_{1}+\dots +e_j}})
\La_s(x^{p^{e_{1}+\dots +e_j}})^{p^{e_{j+1}+\dots +e_s}}
\\
&
-V_{s-1}(x)\La_s(x^{p^{e_{1}+\dots +e_s}})
\equiv
\\ &
\equiv\bigg(W_{s-1}(x)
-\sum_{j=1}^{s-1}V_{j-1}(x)W_{s-1}^{(j)}(x^{p^{e_{1}+\dots +e_j}})
-V_{s-1}(x)\bigg)\La_s(x)^{p^{e_{1}+\dots +e_s}}
=0,
\end{align*}
obtaining the required statement.
\end{proof}

For a Laurent polynomial $F(t,z)$ in $t,z$, let $N(F)\subset \R^r$ be the Newton polytope of $F(t,z)$ with respect to 
the $t$ \emph{variables only}.

\begin{lem}
\label{lem 1.2}
For $s=0,1,\dots,l$, we have
\bea
N(V_s) \subset N(\La_0)+ p^{e_1}N(\La_1) + \dots+p^{e_1+\dots+e_s}N(\La_s)\,.
\eea

\end{lem}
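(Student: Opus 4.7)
The plan is to proceed by induction on $s$, driven by the recursion \eqref{dls-}.  The base case $s=0$ is trivial since $V_0=\La_0$, giving $N(V_0)=N(\La_0)$.  For the inductive step I use three elementary properties of the Newton polytope in the $t$-variables: (i) $N(FG)\subset N(F)+N(G)$ (Minkowski sum), with equality up to cancellation; (ii) if $N(F),N(G)$ are both contained in a convex set $P$, then $N(F+G)\subset P$; and (iii) the substitution $x\mapsto x^{p^k}$ merely scales exponents of $t$, so $N(\si^k(F))=p^k\,N(F)$.

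Setting $E_i:=e_1+\dots+e_i$ with $E_0:=0$, these properties applied to the factors in \eqref{dls-} yield
\[
N(W_s)=\sum_{i=0}^s p^{E_i}N(\La_i),\qquad N\bigl(\si^{E_j}(W_s^{(j)})\bigr)=p^{E_j}\sum_{i=j}^s p^{E_i-E_j}N(\La_i)=\sum_{i=j}^s p^{E_i}N(\La_i).
\]
Combining the latter with the inductive hypothesis $N(V_{j-1})\subset\sum_{i=0}^{j-1}p^{E_i}N(\La_i)$ gives
\[
N\bigl(V_{j-1}\cdot\si^{E_j}(W_s^{(j)})\bigr)\subset\sum_{i=0}^s p^{E_i}N(\La_i)=N(W_s).
\]
Thus every term on the right-hand side of \eqref{dls-}, as well as $W_s$ itself, has Newton polytope contained in the convex polytope $N(W_s)$; by property (ii) so does $V_s$, closing the induction.

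The argument is pure bookkeeping and presents no genuine obstacle.  The only point that must be kept straight is how the Frobenius-like shift $\si^{E_j}$ applied to the tail product $W_s^{(j)}$ supplies precisely the summands $p^{E_j}N(\La_j)+\dots+p^{E_s}N(\La_s)$ of the target polytope that the inductive estimate on $V_{j-1}$ does not cover, so that the two contributions combine exactly into $N(W_s)$.
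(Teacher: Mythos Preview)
Your argument is correct and is exactly the induction on $s$ via the recursion \eqref{dls} that the paper invokes in its one-line proof; you have simply spelled out the Newton-polytope bookkeeping that the paper leaves implicit. One cosmetic point: you phrase the conclusion as containment in $N(W_s)$, which is fine here since $N(W_s)=\sum_{i=0}^s p^{E_i}N(\La_i)$ exactly (the Newton polytope of a product over an integral domain equals the Minkowski sum), but it would be slightly cleaner to work directly with the target polytope $P:=\sum_{i=0}^s p^{E_i}N(\La_i)$ throughout.
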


\begin{proof}
This follows from \eqref{dls} by induction on $s$.
\end{proof}

\smallskip

\subsection{Convex polytopes}

Let $\Dl=(\Dl_0, \dots,  \Dl_l)$ be a tuple of nonempty finite subsets of $\Z^r$  of the same size
$\# \Dl_j=g$ for some positive integer $g$.

\begin{defn}
\label{defN}
A tuple $(N_0,N_1,\dots,N_l)$ of convex polytopes in $\R^r$
is called $(\Dl,{\bf e})$-\emph{admissible} if for any
$0\le i\le j < l$ we have
\bean
\label{def ad}
\phantom{aaaaaa}
\big(\Dl_i + N_i+ p^{e_{i+1}}N_{i+1} + \dots+p^{e_{i+1}+\dots +e_{j}}N_j\big)
\cap p^{e_{i+1}+\dots +e_{j+1}}\Z^r  \subset p^{e_{i+1}+\dots +e_{j+1}}\Dl_{j+1}\,.
\eean

\end{defn}

Notice that any sub-tuple $(N_i,N_{i+1},\dots,N_j)$ of a $(\Dl,{\bf e})$-admissible
tuple $(N_0,N_1,\dots,N_l)$ is $(\Dl',{\bf e}')$-admissible where
$\Dl'=(\Dl_i,\dots,\Dl_j)$ and ${\bf e}'=(e_{i+1},\dots,e_j)$.

\begin{defn}
\label{defn}

A tuple $(\La_0(t,z),\La_1(t,z),\dots,\La_l(t,z))$ of Laurent polynomials 
is called $(\Dl,{\bf e})$-\emph{ad\-missible}
if the tuple 
$\big(N(\La_0), N(\La_1), \dots, N(\La_l)\big)$ is $(\Dl,{\bf e})$-admissible.

\end{defn}

\begin{example}
Let $r=1$, $n=13$, ${\bf e}=(2,2,\dots,2)$,  $\Ga=\{1,2,3,4\}\subset \Z$, 
$\Dl=(\Ga,\Ga, \dots,\Ga)$, 
\linebreak
$N=[0,13(p^2-1)/3]\subset \R$,
$F(t_1,z) =\prod_{i=1}^{13}(t_1-z_i)^{(p^2-1)/3}$. Then
the tuple $(N,N, \dots, N)$ of intervals in $\R$ and
the tuple of polynomials $(F(t_1,z), F(t_1,z), \dots, F(t_1,z))$
are $(\Dl,{\bf e})$-admissible.

\end{example}

\smallskip

\subsection{Hasse--Witt matrices}

For $v\in\Z^r$ denote by $\Cf_v F(t,z)$ the coefficient of $t^v$ in the Laurent polynomial $F(t,z)$.
This is a Laurent polynomial in $z$.

\vsk.2>

Given $m\ge 1$ and finite subsets $\Dl',\Dl''\subset \Z^r$,
define  the \emph{Hasse--Witt matrix} of the Laurent polynomial $F(t,z)$
by the formula
\bean
\label{Cuv+}
A(m, \Dl',\Dl'', F(t,z))
:=
\big( \Cf_{p^mv-u} F(t,z)\big)_{u\in\Dl', v\in\Dl''}\,.
\eean

\begin{lem}
\label{lem 2.5}

Let $\La$ be a $(\Dl,{\bf e})$-admissible tuple of Laurent polynomials in 
\linebreak
$\Z_p[x^{\pm1}]=\Z_p[t^{\pm1}, z^{\pm1}]$. 
Then for $0\le s\le l$ we have
\begin{alignat*}{2}
\textup{(i)} &\;\;
A(e_{1}+\dots+e_{s+1}, \Dl_0,\Dl_{s+1}, V_s) \equiv 0 \pmod{p^s};
\\
\textup{(ii)} &\;\; 
A\big(e_{1}+\dots+e_{s+1}, \Dl_0,\Dl_{s+1},  W_s\big)
=
\\
&
=
A\big(e_1,\Dl_0,\Dl_{1}, V_0) \cdot  \si^{e_1}\big(A\big(e_{2}+\dots+e_{s+1},\Dl_1,\Dl_{s+1},  W_s^{(1)}\big)\big)
+
\\
&
+A\big(e_1+e_2,\Dl_0,\Dl_{2}, V_1) \cdot \si^{e_{1}+e_{2}}\big(A\big(e_{3}+\dots+e_{s+1}, \Dl_2,\Dl_{s+1}, W_s^{(2)}\big)\big)
+\dots 
+ 
\\ 
&
+A\big(e_{1}+\dots+e_{s},\Dl_0,\Dl_{s}, V_{s-1}) \cdot \si^{e_{1}+\dots+e_{s}}\big(A\big(e_{s+1},\Dl_s,\Dl_{s+1},  W_s^{(s)}\big)\big)
+
\\
&
+ A\big(e_{1}+\dots+e_{s+1},\Dl_0,\Dl_{s+1}, V_s).
\end{alignat*}

\end{lem}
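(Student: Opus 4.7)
Part (i) is essentially immediate from Lemma \ref{lem dl}: that lemma says $V_s(x) \equiv 0 \pmod{p^s}$ in $\Z_p[x^{\pm 1}]$, and every entry of $A(e_1+\dots+e_{s+1}, \Dl_0, \Dl_{s+1}, V_s)$ is by definition \eqref{Cuv+} a single coefficient of $V_s$ (namely $\Cf_{p^{e_1+\dots+e_{s+1}}v - u} V_s$ as a polynomial in $z$), so it is divisible by $p^s$.

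For Part (ii) the plan is to extract the $t^{p^{\nu} v - u}$ coefficient on both sides of the compact identity \eqref{dls-}, where $\nu = e_1+\dots+e_{s+1}$, $u \in \Dl_0$, $v \in \Dl_{s+1}$, and then recognize each term as a matrix-product entry. Writing $\mu_j = e_1+\dots+e_j$, the summand $V_{j-1}(x) \, \si^{\mu_j}(W_s^{(j)}(x))$ contributes
\[
\sum_{w_1 + p^{\mu_j} w_2 \,=\, p^{\nu} v - u} \Cf_{w_1}(V_{j-1}) \cdot \si^{\mu_j}\!\bigl(\Cf_{w_2}(W_s^{(j)})\bigr),
\]
while the $V_s$ summand gives exactly the last term $A(\nu,\Dl_0,\Dl_{s+1},V_s)_{u,v}$. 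I would reindex the displayed sum by $v' = (u + w_1)/p^{\mu_j}$, so that $w_1 = p^{\mu_j} v' - u$ and $w_2 = p^{\nu - \mu_j} v - v'$; this rewrites the summand as the $(u,v)$ entry of
\[
A(\mu_j, \Dl_0, \Dl_j, V_{j-1}) \cdot \si^{\mu_j}\!\bigl(A(\nu - \mu_j, \Dl_j, \Dl_{s+1}, W_s^{(j)})\bigr),
\]
\emph{provided} the new index $v'$ may be taken to run over $\Dl_j$. Summing over $j=1,\dots,s$ and including the $V_s$ term then yields the claimed identity.

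The main obstacle---and the sole function of the $(\Dl,\mathbf{e})$-admissibility hypothesis---is to justify the restriction $v' \in \Dl_j$. Any nonzero contribution to the above sum forces $w_1$ to lie in the Newton polytope of $V_{j-1}$ in the $t$-variables, so by Lemma \ref{lem 1.2},
\[
u + w_1 \in \Dl_0 + N(\La_0) + p^{e_1} N(\La_1) + \dots + p^{\mu_{j-1}} N(\La_{j-1}).
\]
The equation $u + w_1 = p^{\mu_j} v'$ places this vector in $p^{\mu_j} \Z^r$, and Definition \ref{defN} applied with $i = 0$ and its upper index chosen to be $j-1$ (permissible because $1 \le j \le s \le l$) concludes that $u + w_1 \in p^{\mu_j} \Dl_j$, i.e.\ $v' \in \Dl_j$. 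Conversely, any $v' \in \Dl_j$ yields a valid (possibly zero) term in the parameterization, so extending the sum to all of $\Dl_j$ is harmless. This is the only nontrivial step; once it is in place, the matrix identity in (ii) falls out directly from collecting the pieces.
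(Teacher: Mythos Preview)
Your proof is correct and follows essentially the same approach as the paper's. Both arguments take the coefficient of $t^{p^{e_1+\dots+e_{s+1}}v-u}$ in the ghost decomposition \eqref{dls+}/\eqref{dls-}, use Lemma~\ref{lem 1.2} to place the relevant exponent $u+w_1$ inside $\Dl_0+N(\La_0)+\dots+p^{\mu_{j-1}}N(\La_{j-1})$, and then invoke $(\Dl,{\bf e})$-admissibility to force the intermediate index into $\Dl_j$; your reindexing variable $v'$ is exactly the paper's $\dl$.
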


Notice that all these matrices are $g\times g$-matrices.

\begin{proof}  

Part (i) follows from Lemma \ref{lem dl}. To prove (ii) consider the identity
\begin{align}
\label{jth}
&
\La_0(t,z)\La_1(t,z)^{p^{e_1}}\dots \La_s(t,z)^{p^{e_1+\dots+e_s}}
=
\sum_{j=1}^sV_{j-1}(t,z)\La_j(t^{p^{e_1+\dots+e_j}},z^{p^{e_1+\dots+e_j}}) \times
\\ &
\notag
\qquad
\times \La_{j+1}(t^{p^{e_1+\dots+e_j}},z^{p^{e_1+\dots+e_j}})^{p^{e_{j+1}}}\dots
\La_s(t^{p^{e_1+\dots+e_j}},z^{p^{e_1+\dots+e_j}})^{p^{e_{j+1}+\dots+e_s}}+ V_s(t,z),
\end{align}
which is nothing else but \eqref{dls+}.
Let $u\in\Dl_0,v\in\Dl_{s+1}$. In order to calculate the coefficient of $t^{p^{e_{1}+\dots+e_{s+1}}v-u}$
in the $j$-th summand on the right-hand side of \eqref{jth},
we look for all pairs of vectors $w\in N(V_{j-1})$ and $y \in N(\La_j(t,z)\dots\La_s(t,z)^{p^{e_{j+1}+\dots+e_{s+1}}})$
such that
\bea
w+p^{e_1+\dots+e_j}y = p^{e_1+\dots+e_{s+1}}v-u.
\eea
Hence  $u+w\in p^{e_1+\dots+e_j} \Z^r$. On the other hand, it follows from Lemma \ref{lem 1.2}
that $w \in N(\La_0)+ p^{e_1}N(\La_1) + \dots+p^{e_1+\dots+e_{j-1}}N(\La_{j-1})$, so that
\bea
u+w\in \Dl_0 + N(\La_0)+ pN(\La_1) + \dots+p^{e_1+\dots+e_{j-1}}N(\La_{j-1}).
\eea
From the $(\Dl,{\bf e})$-admissibility we deduce that
$u+w = p^{e_1+\dots+e_{j}} \dl$ for some $\dl\in\Dl_j$, thus
$w=p^{e_1+\dots+e_j} \dl -\nobreak u$, \  $y=p^{e_{j+1}+\dots+e_{s+1}}v-\dl$ and
\begin{align*}
&
\Cf_{p^{e_1+\dots+e_{s+1}}v-u}
\big(V_{j-1}(t,z)\La_j(t^{p^{e_1+\dots+e_{j}}},
z^{p^{e_1+\dots+e_{j}}})\dots\La_s(t^{p^{e_1+\dots+e_j}},z^{p^{e_1+\dots+e_j}})^{p^{e_{j+1}+\dots+e_s}}
\big)
=
\\ &
\phantom{aaa}
=\sum_{\dl\in\Dl_j} \Cf_{p^{e_1+\dots+e_{j}}\dl-u}(V_{j-1}(t,z))\, \cdot
\\
&
\phantom{aaaaaa}
\cdot\,
\si^{e_1+\dots+e_j}\big(\Cf_{p^{e_{j+1}+\dots+e_{s+1}}v-\dl}
\big(\La_j(t,z)\La_{j+1}(t,z)^{p^{e_{j+1}} }\dots
\La_s(t,z)^{p^{e_{j+1}+\dots+e_{s}}}\big)\big).
\end{align*}
This proves  (ii).
\end{proof}

\subsection{Congruences}

The next results discuss congruences of the type
\\
$F_1(z)F_2(z)^{-1}\equiv G_1(z)G_2(z)^{-1}\pmod{p^s}$, where
 $F_1,F_2,G_1,G_2$ are $g\times g$ matrices whose entries are Laurent polynomials in $z$.
We consider such congruences when 
the determinants $\det F_2(z)$ and $\det G_2(z)$
are  Laurent polynomials  both nonzero  modulo~$p$.
Using Cramer's rule we write
  the entries of the inverse matrix $F_2(z)^{-1}$ in the 
 form $f_{ij}(z)/\det F_2(z)$ for $f_{ij}(z)\in\Z_p[z^{\pm1}]$ and do a similar computation for $G_2(z)$.
This presents the congruence $F_1(z)F_2(z)^{-1}\equiv G_1(z)G_2(z)^{-1}\pmod{p^s}$ 
in the form 
\bean
\label{ff=gg}
\frac1{\det F_2(z)}\cdot F(z)\ \equiv\  \frac1{\det G_2(z)}\cdot G(z) \pmod{p^s}
\eean 
for some $g\times g$ matrices $F(z), G(z)$ with entries in $\Z_p[z^{\pm1}]$,
while \eqref{ff=gg} is nothing else but the congruence 
$F(z)\cdot \det G_2(z)\equiv  G(z)\cdot \det F_2(z) \pmod{p^s}$.

\begin{thm}
\label{thm 1.6}
Let $(\La_0(t,z),\La_1(t,z),\dots,\La_l(t,z))$ be a $(\Dl,{\bf e})$-admissible
 tuple of Laurent polynomials in $\Z_p[x^{\pm1}]=\Z_p[t^{\pm1}, z^{\pm1}]$. 
\begin{enumerate}
\item[\textup{(i)}] For $0\leq s\leq l$ we have
\begin{align}
\notag
&
A\big(e_1+\dots+e_{s+1}, \Dl_0,\Dl_{s+1}, \La_0(x)\La_1(x)^{p^{e_1}}\cdots \La_s(x)^{p^{e_1+\dots+e_s}}
\big)\equiv
\\
\notag
&
\equiv A\big(e_1,\Dl_0,\Dl_1, \La_0(x)\big) \cdot
\si^{e_1}\big(A\big(e_2,\Dl_1,\Dl_2, \La_1(x)\big)\big) 
\cdots \si^{e_1+\dots+e_s}\big(A\big(e_{s+1},\Dl_s,\Dl_{s+1}, \La_s(x)\big)\big)
\end{align}
modulo $p$.

\item[\textup{(ii)}] Assume that the determinants of  the matrices $ A\big(e_{i+1},\Dl_i,\Dl_{i+1}, \La_i(t,z)\big)$, $i=0,1,\dots,l$, 
are Laurent polynomials all nonzero modulo~$p$.
Then for $1\leq s \leq l$ 
the determinant of the matrix 
$A\big(e_2+\dots+e_{s+1},\Dl_1, \Dl_{s+1}, 
\La_1(x)\La_2(x)^{p^{e_2}}\cdots \La_s(x)^{p^{e_2+\dots+e_{s}}} \big)$
is a Laurent polynomial nonzero modulo~$p$ and 
we have modulo $p^s$\,\textup:
\begin{align}
\label{s cong}
&
A\big(e_1+\dots+e_{s+1}, \Dl_0,\Dl_{s+1}, \La_0(x)\La_1(x)^{p^{e_1}}\cdots \La_s(x)^{p^{e_1+\dots+e_s}}\big)
\cdot
\\
\notag
&
\cdot 
\si^{e_1}\big(
A\big(e_2+\dots+e_{s+1},\Dl_1, \Dl_{s+1}, 
\La_1(x)\La_2(x)^{p^{e_2}}\cdots \La_s(x)^{p^{e_2+\dots+e_{s}}} \big)\big)^{-1}
\equiv
\\
\notag
&
\equiv
A\big(e_1+\dots+e_{s},\Dl_0,\Dl_{s}, \La_0(x)\La_1(x)^{p^{e_1}}\cdots \La_{s-1}(x)^{p^{e_1+\dots+e_{s-1}}}\big)
\cdot
\\
\notag
&
\cdot
\si^{e_1}\big(A\big(e_2+\dots+e_s,\Dl_1,\Dl_s, \La_1(x)\La_2(x)^{p^{e_2}}\cdots \La_{s-1}(x)^{p^{e_2+\dots+e_{s-1}}}\big)\big)^{-1},
\end{align}
where in this congruence  for $s=1$ we 
understand the second factor on the right-hand side  as 
 the  $g\times g$ identity matrix, see formula \eqref{s=1} below.

\end{enumerate}

\end{thm}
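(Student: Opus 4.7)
My plan mirrors the theorem's structure.

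\textbf{Part (i)} follows by induction on $s$. The base $s = 0$ is immediate since $W_0 = V_0 = \La_0$. For the inductive step, expand $A(e_1+\dots+e_{s+1},\Dl_0,\Dl_{s+1},W_s)$ via Lemma~\ref{lem 2.5}(ii) and reduce modulo $p$: by Lemma~\ref{lem dl} every summand indexed by $j \ge 2$ carries a factor $V_{j-1} \equiv 0 \pmod{p^{j-1}}$ and hence vanishes, leaving only $A(e_1,\Dl_0,\Dl_1,\La_0)\cdot\si^{e_1}(A(e_2+\dots+e_{s+1},\Dl_1,\Dl_{s+1},W_s^{(1)}))$. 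The subtuple $(\La_1,\dots,\La_s)$ is admissible by the remark after Definition~\ref{defn}, so the inductive hypothesis factors the inner matrix into the required product. The nonzero-determinant claim in Part~(ii) is then immediate: by Part~(i) the matrix $A(e_2+\dots+e_{s+1},\Dl_1,\Dl_{s+1},W_s^{(1)})$ is congruent mod $p$ to a product of matrices $\si^\bullet(A(e_{i+1},\Dl_i,\Dl_{i+1},\La_i))$, each with determinant nonzero mod $p$ by hypothesis.

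\textbf{The main congruence of Part~(ii)} I would prove by strong induction on $s$, so that the theorem is available on all admissible subtuples at smaller parameter values. The base case $s = 1$ is direct from Lemma~\ref{lem 2.5}(ii) combined with $V_1 \equiv 0 \pmod p$. For the inductive step, introduce the shorthand $C_j := A(e_1+\dots+e_j,\Dl_0,\Dl_j,V_{j-1})$, $G_s^{(j)} := A(e_{j+1}+\dots+e_{s+1},\Dl_j,\Dl_{s+1},W_s^{(j)})$, and $Y_j^{(s)} := \si^{e_1+\dots+e_j}(G_s^{(j)})\si^{e_1}(B_s^{(1)})^{-1}$, where $B_s^{(1)} = G_s^{(1)}$. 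Applying Lemma~\ref{lem 2.5}(ii) to both $B_s$ and $B_{s-1}$ and subtracting, the difference $L_s - L_{s-1}$ splits into three contribution types.

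\emph{Type~(a)}, the bulk terms $C_j(Y_j^{(s)} - Y_j^{(s-1)})$ for $j = 2,\dots,s-1$: since $C_j\equiv 0\pmod{p^{j-1}}$, it suffices to prove $Y_j^{(s)} \equiv Y_j^{(s-1)} \pmod{p^{s-j+1}}$. To this end, apply the inductive hypothesis of Part~(ii) to the admissible subtuple $(\La_k,\dots,\La_s)$ at its own parameter $s-k < s$ for each $k = 1,\dots,j-1$, obtaining $G_s^{(k)}\si^{e_{k+1}}(G_s^{(k+1)})^{-1} \equiv G_{s-1}^{(k)}\si^{e_{k+1}}(G_{s-1}^{(k+1)})^{-1} \pmod{p^{s-k}}$. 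Apply $\si^{e_2+\dots+e_k}$ to the $k$-th congruence and multiply them telescopically to obtain $G_s^{(1)}\si^{e_2+\dots+e_j}(G_s^{(j)})^{-1} \equiv G_{s-1}^{(1)}\si^{e_2+\dots+e_j}(G_{s-1}^{(j)})^{-1}$ modulo the minimum precision $p^{s-j+1}$; inversion (legitimate because Part~(i) forces the determinants to be nonzero mod $p$) and a final $\si^{e_1}$ shift give the desired congruence for $Y_j$. \emph{Type~(b)}, the boundary contribution at $j = s$, collects the main $j = s$ term of $L_s$ and the ghost remainder of $L_{s-1}$; both share the prefactor $C_s \equiv 0\pmod{p^{s-1}}$, and Part~(i) applied to $(\La_1,\dots,\La_s)$ gives $\si^{e_1}(B_s^{(1)}) \equiv \si^{e_1}(B_{s-1}^{(1)})\cdot\si^{e_1+\dots+e_s}(A(e_{s+1},\Dl_s,\Dl_{s+1},\La_s)) \pmod p$, which makes the accompanying bracket vanish mod $p$, producing a total contribution mod $p^s$. \emph{Type~(c)}, the ghost remainder of $L_s$, equals $A(\cdot,V_s)\si^{e_1}(B_s^{(1)})^{-1}$, vanishing mod $p^s$ by Lemma~\ref{lem dl}.

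The principal obstacle is the precise bookkeeping of matrix-valued congruences through Cramer's rule: the telescoping of subtuple congruences at decreasing precisions $p^{s-1},p^{s-2},\dots,p^{s-j+1}$ must be carried out with care since the matrices do not commute, and the various $\si$-shifts and Cramer inversions (justified by the nonzero-determinant guarantee from Part~(i)) must preserve the stated precisions at each step of the nested inductions.
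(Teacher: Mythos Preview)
Your proposal is correct and follows essentially the same approach as the paper: expand both sides via Lemma~\ref{lem 2.5}(ii), discard the $V_s$ remainder, and for each $j$ combine the ghost divisibility $V_{j-1}\equiv 0\pmod{p^{j-1}}$ with a telescoped product of the inductive hypothesis on subtuples to match the $j$-th summands modulo~$p^s$. The only cosmetic difference is that you isolate the $j=s$ boundary as a separate Type~(b) and handle it directly via Part~(i), whereas the paper absorbs it into the uniform range $j=2,\dots,s$ (the telescoping there degenerates to a mod~$p$ statement, which is Part~(i) in disguise); your $\si$-shifts are also applied in a slightly different order, but equivalently.
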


\begin{proof}
By Lemma \ref{lem 2.5} we have
\begin{align*}
&
A\big(e_1+\dots+e_{s+1},\Dl_0,\Dl_{s+1}, \La_0(x)\La_1(x)^{p^{e_1}}\dots \La_s(x)^{p^{e_1+\dots+e_{s}}}\big)
\equiv
\\
&
\equiv
A\big(e_1,\Dl_0,\Dl_{1}, \La_0(x)) \cdot  \si^{e_1}\big(A\big(e_{2}+\dots+e_{s+1},\Dl_1,\Dl_{s+1},  
\La_1(x)\La_2(x)^{p^{e_2}}\dots \La_s(x)^{p^{e_2+\dots+e_{s}}}\big)\big)
\end{align*}
modulo $p$.
Iteration gives part (i) of the theorem.

\vsk.2>
If the determinants of  the matrices $ A\big(e_{i+1},\Dl_i,\Dl_{i+1}, \La_i(t,z)\big)$, $i=0,1,\dots,l$, 
are Laurent polynomials all nonzero modulo~$p$, then part (i) implies that
the determinant
\bea
&
\det A\big(e_2+\dots+e_{s+1},\Dl_1, \Dl_{s+1}, 
\La_1(x)\La_2(x)^{p^{e_2}}\cdots \La_s(x)^{p^{e_2+\dots+e_{s}}} \big)
\equiv 
\\
&
\equiv
\prod_{j=1}^s \det \si^{e_2+\dots+e_{j}}\big(A\big(e_{j+1},\Dl_j,\Dl_{j+1}, \La_j(t,z)\big)\big) \pmod{p},
\eea
is a Laurent polynomial nonzero modulo $p$.  
This proves the first statement of part (ii) of the theorem and allows us to consider the inverse matrices 
in the congruence of part (ii).

\vsk.2>

We prove part (ii) by induction on $s$. For $s=1$, congruence \eqref{s cong}
takes the form
\bean
\label{s=1}
&
\\
\notag
&
A\big(e_1+e_2,\Dl_0,\Dl_2,\La_0(x)\La_1(x)^{p^{e_1}}\big)\cdot 
\si^{e_1}\big(A\big(e_2,\Dl_1,\Dl_2,\La_1(x)\big)\big)^{-1}
\equiv
A\big(e_1,\Dl_2,\Dl_1,\La_0(x)\big)
\eean
modulo $p$. This congruence  follows from part (i).

\vsk.2>

For $1< s < l$ 
we substitute the expressions for 
$A\big(e_1+\dots+e_{s+1}, \Dl_0,\Dl_{s+1}, \La_0(x)\La_1(x)^{p^{e_1}}\cdots $
$\cdots\La_s(x)^{p^{e_1+\dots+e_s}}\big)$
and
$A\big(e_1+\dots+e_{s},\Dl_0,\Dl_{s}, \La_0(x)\La_1(x)^{p^{e_1}}\cdots \La_{s-1}(x)^{p^{e_1+\dots+e_{s-1}}}\big)$
from part (ii) of Lemma~\ref{lem 2.5} into the two sides of the desired congruence:
\bean
\label{con1}
&
A\big(\sum_{a=1}^{s+1}e_a, \Dl_0,\Dl_{s+1}, \La_0(x)\La_1(x)^{p^{e_1}}\cdots \La_s(x)^{p^{e_1+\dots+e_s}}\big)
\cdot
\\
\notag
&
\cdot 
\si^{e_1}\big(A\big(\sum_{a=2}^{s+1}e_a,\Dl_1, \Dl_{s+1},
\La_1(x)\La_2(x)^{p^{e_2}}\cdots \La_s(x)^{p^{e_2+\dots+e_{s}}} \big)\big)^{-1} = A\big(e_1,\Dl_0,\Dl_{1}, V_0) +
\\
\notag
&
+ \sum_{j=2}^s
A\big(\sum_{a=1}^{j}e_a ,\Dl_0,\Dl_{j+1}, V_{j-1}) \cdot 
\si^{\sum_{a=1}^{j}e_a}\big(A\big(\sum_{a=j+1}^{s+1}e_{a},\Dl_j,\Dl_{s+1}, 
 W_s^{(j)}\big)\big)
\cdot
\\
&
\notag
\cdot
\,
\si^{e_1}\big(A\big(\sum_{a=2}^{s+1}e_a,\Dl_1, \Dl_{s+1}, W_s^{(1)} \big)\big)^{-1}
+ 
\\
\notag
&
+ \, A\big(\sum_{a=1}^{s+1}e_a, \Dl_0,\Dl_{s+1},V_s  \big)\cdot
\si^{e_1}\big(A\big(\sum_{a=2}^{s+1}e_a,\Dl_1, \Dl_{s+1}, W_s^{(1)} \big)\big)^{-1}
\eean
and
\bean
\label{con2}
&
A\big(\sum_{a=1}^{s}e_a, \Dl_0,\Dl_{s}, \La_0(x)\La_1(x)^{p^{e_1}}\cdots \La_{s-1}(x)^{p^{e_1+\dots+e_{s-1}}}\big)
\cdot
\\
\notag
&
\cdot 
\si^{e_1}\big(A\big(\sum_{a=2}^{s}e_a,\Dl_1, \Dl_{s},
\La_1(x)\La_2(x)^{p^{e_2}}\cdots \La_{s-1}(x)^{p^{e_2+\dots+e_{s-1}}} \big)\big)^{-1} = A\big(e_1,\Dl_0,\Dl_{1}, V_0) +
\\
\notag
&
+ \sum_{j=2}^s
A\big(\sum_{a=1}^{j}e_a ,\Dl_0,\Dl_{j+1}, V_{j-1}) \cdot \si^{\sum_{a=1}^{j}e_a}\big(A\big(\sum_{a=j+1}^{s}e_{a},\Dl_j,\Dl_{s}, 
 W_{s-1}^{(j)}\big)\big)
\cdot
\\
&
\notag
\cdot
\,
\si^{e_1}\big(A\big(\sum_{a=2}^{s}e_a,\Dl_1, \Dl_{s}, W_{s-1}^{(1)} \big)\big)^{-1}.
\eean
Since we want to compare these two expressions modulo $p^s$,
the last term in \eqref{con1} containing $V_s \equiv 0 \pmod{p^s}$ can be ignored.

Given $j = 2, \dots , s$, we use the inductive hypothesis as
follows:
\bea
&
A\big(\sum_{a=i+1}^{s+1}e_a, \Dl_{i},\Dl_{s+1}, W^{(i)}_{s}\big)
\cdot
\si^{e_{i+1}}\big(
A\big(\sum_{a=i+2}^{s+1}e_a,\Dl_{i+1}, \Dl_{s+1},W^{(i+1)}_{s} \big)\big)^{-1}
\equiv
\\
\notag
&
\equiv
A\big(\sum_{a=i+1}^{s}e_a, \Dl_{i},\Dl_{s}, W^{(i)}_{s-1}\big)
\cdot
\si^{e_{i+1}}\big(
A\big(\sum_{a=i+2}^{s}e_a,\Dl_{i+1}, \Dl_{s},W^{(i+1)}_{s-1} \big)\big)^{-1}
\pmod{p^{s-i}}
\eea
for $i=1,\dots,j-1$. Applying $\sigma^{\sum_{a=1}^ie_a}$ to the $i$-th congruence
 and multiplying them out lead to telescoping products on both sides:
\bea
&
\si^{e_1}\big(A\big(\sum_{a=2}^{s+1}e_a,\Dl_1, \Dl_{s+1}, W_s^{(1)} \big)\big)
\cdot
\si^{\sum_{a=1}^{j}e_a}\big(A\big(\sum_{a=j+1}^{s+1}e_{a},\Dl_j,\Dl_{s+1}, 
 W_s^{(j)}\big)\big)^{-1}\equiv
\\
&
\equiv 
\si^{e_1}\big(A\big(\sum_{a=2}^{s}e_a,\Dl_1, \Dl_{s}, W_{s-1}^{(1)} \big)\big)
\cdot \si^{\sum_{a=1}^{j}e_a}\big(A\big(\sum_{a=j+1}^{s}e_{a},\Dl_j,\Dl_{s}, 
 W_{s-1}^{(j)}\big)\big)^{-1} 
 \eea
modulo $p^{s-j+1}$.
 By our assumptions these four matrices are invertible.
Therefore, we can invert them to obtain the congruence
\bean
\label{17}
&
\phantom{aaa}
\si^{\sum_{a=1}^{j}e_a}\big(A\big(\sum_{a=j+1}^{s+1}e_{a},\Dl_j,\Dl_{s+1}, W_s^{(j)}\big)\big)
 \cdot
 \si^{e_1}\big(A\big(\sum_{a=2}^{s+1}e_a,\Dl_1, \Dl_{s+1}, W_s^{(1)} \big)\big)^{-1}
 \equiv
\\
&
\notag
\phantom{aaa}
\equiv 
\si^{\sum_{a=1}^{j}e_a}\big(A\big(\sum_{a=j+1}^{s}e_{a},\Dl_j,\Dl_{s}, 
 W_{s-1}^{(j)}\big)\big)\cdot \si^{e_1}\big(A\big(\sum_{a=2}^{s}e_a,\Dl_1, \Dl_{s}, W_{s-1}^{(1)} \big)\big)^{-1}
\eean
modulo $p^{s-j+1}$.
Since $V_{j-1}\equiv 0\pmod{p^{j-1}}$, we obtain 
the congruence
\bea
&
A\big(\sum_{a=1}^{j}e_a ,\Dl_0,\Dl_{j+1}, V_{j-1}) \cdot
\\
&
\cdot\,
\si^{\sum_{a=1}^{j}e_a}\big(A\big(\sum_{a=j+1}^{s+1}e_{a},\Dl_j,\Dl_{s+1}, W_s^{(j)}\big)\big)
 \cdot
 \si^{e_1}\big(A\big(\sum_{a=2}^{s+1}e_a,\Dl_1, \Dl_{s+1}, W_s^{(1)} \big)\big)^{-1}
 \equiv
\\
&
\equiv 
A\big(\sum_{a=1}^{j}e_a ,\Dl_0,\Dl_{j+1}, V_{j-1}) \cdot
\\
&
\cdot\,
\si^{\sum_{a=1}^{j}e_a}\big(A\big(\sum_{a=j+1}^{s}e_{a},\Dl_j,\Dl_{s}, 
 W_{s-1}^{(j)}\big)\big)\cdot \si^{e_1}\big(A\big(\sum_{a=2}^{s}e_a,\Dl_1, \Dl_{s}, W_{s-1}^{(1)} \big)\big)^{-1}
\eea
modulo $p^{s}$. This shows that the $j$-th summands in \eqref{con1} and \eqref{con2} are congruent modulo $p^s$.
The theorem is proved.
\end{proof}

\begin{cor}

Under the assumptions of part \textup{(ii)} of Theorem \textup{\ref{thm 1.6}}
for $1\leq s \leq l$ 
we have\,\textup:
\begin{align*}
&
\det 
A\big(e_1+\dots+e_{s+1}, \Dl_0,\Dl_{s+1}, \La_0(x)\La_1(x)^{p^{e_1}}\cdots \La_s(x)^{p^{e_1+\dots+e_s}}\big)
\cdot
\\
\notag
&
\cdot 
\det
\si^{e_1}\big(A\big(e_2+\dots+e_s,\Dl_1,\Dl_s, \La_1(x)\La_2(x)^{p^{e_2}}\cdots \La_{s-1}(x)^{p^{e_2+\dots+e_{s-1}}}\big)\big)
\equiv
\\
\notag
&
\equiv
\det A\big(e_1+\dots+e_{s},\Dl_0,\Dl_{s}, \La_0(x)\La_1(x)^{p^{e_1}}\cdots \La_{s-1}(x)^{p^{e_1+\dots+e_{s-1}}}\big)
\cdot
\\
\notag
&
\cdot
\det \si^{e_1}\big(
A\big(e_2+\dots+e_{s+1},\Dl_1, \Dl_{s+1}, 
\La_1(x)\La_2(x)^{p^{e_2}}\cdots \La_s(x)^{p^{e_2+\dots+e_{s}}} \big)\big)
\end{align*}
modulo $p^s$.

\end{cor}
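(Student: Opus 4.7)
The plan is to derive the claimed determinantal congruence from the matrix congruence of Theorem \ref{thm 1.6}(ii) by a direct computation, the only subtle point being a cancellation of non-zero-divisors modulo $p^s$ at the end. For brevity, set
\begin{align*}
F_1 &:= A\bigl(e_1+\dots+e_{s+1},\,\Dl_0,\Dl_{s+1},\,W_s\bigr),
\\
F_2 &:= \si^{e_1}\bigl(A\bigl(e_2+\dots+e_{s+1},\,\Dl_1,\Dl_{s+1},\,W_s^{(1)}\bigr)\bigr),
\\
G_1 &:= A\bigl(e_1+\dots+e_s,\,\Dl_0,\Dl_s,\,W_{s-1}\bigr),
\\
G_2 &:= \si^{e_1}\bigl(A\bigl(e_2+\dots+e_s,\,\Dl_1,\Dl_s,\,W_{s-1}^{(1)}\bigr)\bigr).
\end{align*}
Under the Cramer's rule convention of \eqref{ff=gg}, the congruence $F_1F_2^{-1}\equiv G_1G_2^{-1}\pmod{p^s}$ proved in Theorem \ref{thm 1.6}(ii) is by definition the honest matrix congruence
$$F_1\cdot\on{adj}(F_2)\cdot\det(G_2)\;\equiv\;G_1\cdot\on{adj}(G_2)\cdot\det(F_2)\pmod{p^s}$$
in $\Z_p[z^{\pm1}]$; my task is to extract from this the scalar congruence $\det(F_1)\det(G_2)\equiv\det(G_1)\det(F_2)\pmod{p^s}$.

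Taking determinants on both sides of the above matrix congruence and using the universal identity $\det(\on{adj}(M))=(\det M)^{g-1}$ for $g\times g$ matrices together with the multiplicativity of $\det$, I immediately obtain
$$\det(F_1)\,(\det F_2)^{g-1}\,(\det G_2)^g\;\equiv\;\det(G_1)\,(\det G_2)^{g-1}\,(\det F_2)^g\pmod{p^s},$$
which, after rearrangement, is the same as
$$\bigl(\det F_2\cdot\det G_2\bigr)^{g-1}\cdot\bigl(\det(F_1)\det(G_2)-\det(G_1)\det(F_2)\bigr)\;\equiv\;0\pmod{p^s}.$$

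It remains to cancel the factor $(\det F_2\cdot\det G_2)^{g-1}$, which is the main (and only) obstacle. By the first assertion of Theorem \ref{thm 1.6}(ii), each of $\det F_2$ and $\det G_2$ is a Laurent polynomial in $z$ whose reduction modulo $p$ is nonzero, so each of them has at least one coefficient that is a unit of $\Z_p$. By McCoy's theorem, applied to the ring $(\Z_p/p^s\Z_p)[z^{\pm1}]$, any such Laurent polynomial is a non-zero-divisor modulo $p^s$; hence so is any power of $\det F_2\cdot\det G_2$, and cancellation yields the stated congruence. For $s=1$, the convention of \eqref{s=1} makes $G_2$ the $g\times g$ identity matrix, so $\det G_2=1$ and the assertion collapses to the determinantal form of the mod-$p$ congruence of part (i) of the theorem, so the argument goes through in that case as well.
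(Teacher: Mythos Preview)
Your argument is correct and is precisely the intended route: the paper states the corollary with no proof, so it is meant to follow from Theorem~\ref{thm 1.6}(ii) by taking determinants of the matrix congruence \eqref{s cong}, which is exactly what you do. The only point that requires justification is the cancellation of $(\det F_2\cdot\det G_2)^{g-1}$ modulo $p^s$, and your treatment of it is valid.

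One minor remark: invoking McCoy's theorem is correct but heavier than necessary. Since $\det F_2$ and $\det G_2$ are nonzero modulo $p$, so is $B:=(\det F_2\cdot\det G_2)^{g-1}$. If $X:=\det(F_1)\det(G_2)-\det(G_1)\det(F_2)$ were not $\equiv 0\pmod{p^s}$, write $X=p^kX'$ with $0\le k<s$ and $X'\not\equiv 0\pmod p$; then $B\cdot X'\equiv 0\pmod p$ in $\F_p[z^{\pm1}]$, contradicting the fact that $\F_p[z^{\pm1}]$ is an integral domain. This elementary valuation argument gives the same conclusion without McCoy and fits the spirit of the paper's discussion preceding~\eqref{ff=gg}.
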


\subsection{Derivations}

Recall that $z=(z_1,\dots,z_n)$. Denote
\bea
D_v=\frac{\der}{\der z_v}, \quad v=1,\dots,n.
\eea
Let $F_1(z),F_2(z),G_1(z),G_2(z) \in \Z_p[z^{\pm1}]$  and $\ell\geq 1$.
If
\bea
D_v(F_1(z))\cdot F_2(z)\equiv  D_v(G_1(z))\cdot G_2(z)\pmod{p^s}\,,
\eea
then
\begin{align}
\label{1.5}
&
D_v(\si^\ell(F_1(z)))\cdot\si^\ell(F_2(z)) - D_v(\si^\ell(G_1(z)))\cdot\si^\ell(G_2(z))
=
\\ &\qquad
= D_v(F_1(z^{p^\ell}))\cdot F_2(z^{p^\ell}) - D_v(G_1(z^{p^\ell}))\cdot G_2(z^{p^\ell})
=
\notag
\\ &\qquad
= p^\ell z_v^{p^\ell-1}\big(D_v(F_1(z))\cdot F_2(z) - D_v(G_1(z))\cdot G_2(z)\big)\big|_{z\to z^{p^\ell}}
\equiv
\notag
\\ &\qquad
\equiv 0 \pmod{p^{s+\ell}}.
\notag
\end{align}

\vsk.2>

\begin{thm}
\label{thm der}
Let $(\La_0(t,z),\La_1(t,z), \dots, \La_l(t,z))$ be a 
$(\Dl,{\bf e})$-admissible tuple of Laurent polynomials in $\Z_p[x^{\pm1}]=\Z_p[t^{\pm1},z^{\pm1}]$. 
Let $D=D_v$ for some $v=1,\dots,n$.
Then under the assumptions  of part \textup{(ii)} of Theorem \textup{\ref{thm 1.6}} we have
\bean
\label{Der}
&
D\big(\si^\ell\big(A\big(\sum_{a=1}^{s+1}e_a, \Dl_0,\Dl_{s+1}, W_s\big)\big)\big)
\cdot \si^\ell\big(A\big(\sum_{a=1}^{s+1}e_a, \Dl_0,\Dl_{s+1}, W_s\big)\big)^{-1} \equiv
\\
&
\notag
\equiv
D\big(\si^\ell\big(A\big(\sum_{a=1}^{s}e_a, \Dl_0,\Dl_{s}, W_{s-1}\big)\big)\big)
\cdot \si^\ell\big(A\big(\sum_{a=1}^{s}e_a, \Dl_0,\Dl_{s}, W_{s-1}\big)\big)^{-1}
\pmod{p^{s+\ell}}
\eean
for   $1\leq s \leq l$  and  $0\leq \ell$.

\end{thm}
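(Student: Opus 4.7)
The plan is to reduce \eqref{Der} to the case $\ell=0$ and then induct on $s$. Throughout, I abbreviate
$A_s := A(e_1{+}\dots{+}e_{s+1}, \Dl_0, \Dl_{s+1}, W_s)$ and $B_s := A(e_2{+}\dots{+}e_{s+1}, \Dl_1, \Dl_{s+1}, W_s^{(1)})$. First I would note that the chain rule gives $D\si^\ell(F) = p^\ell z_v^{p^\ell-1}\si^\ell(DF)$ for any Laurent polynomial matrix $F$, hence for any invertible such $F$,
\[
D\si^\ell(F)\cdot\si^\ell(F)^{-1} \;=\; p^\ell z_v^{p^\ell-1}\,\si^\ell\bigl(DF\cdot F^{-1}\bigr).
\]
Thus the difference of the two sides of \eqref{Der} equals $p^\ell z_v^{p^\ell-1}$ times $\si^\ell$ applied to $DA_s\cdot A_s^{-1}-DA_{s-1}\cdot A_{s-1}^{-1}$. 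So it suffices to establish the $\ell=0$ statement
\[
DA_s\cdot A_s^{-1}\;\equiv\; DA_{s-1}\cdot A_{s-1}^{-1}\pmod{p^s},
\]
since the factor $p^\ell$ automatically promotes the modulus to $p^{s+\ell}$.

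For the $\ell=0$ statement I would induct on $s$. The base case $s=1$ follows from \eqref{s=1}: writing $A_1\equiv A_0\si^{e_1}(B_1)\pmod p$, applying $D$, and noting $D\si^{e_1}(B_1)\equiv 0\pmod{p^{e_1}}$ by the chain rule, the Leibniz rule immediately yields $DA_1\cdot A_1^{-1}\equiv DA_0\cdot A_0^{-1}\pmod p$ since $e_1\geq 1$.

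For the inductive step I would rewrite Theorem \ref{thm 1.6}(ii) as $A_s\equiv A_{s-1}\cdot K'\pmod{p^s}$ with $K':=\si^{e_1}(B_{s-1}^{-1}B_s)$, differentiate, and pass to the inverse modulo $p^s$ to obtain
\[
DA_s\cdot A_s^{-1}\;\equiv\; DA_{s-1}\cdot A_{s-1}^{-1} \;+\; A_{s-1}\bigl(DK'\cdot K'^{-1}\bigr)A_{s-1}^{-1}\pmod{p^s}.
\]
It then suffices to show $DK'\cdot K'^{-1}\equiv 0\pmod{p^s}$. A direct computation combined with the chain rule gives
\[
DK'\cdot K'^{-1}\;=\;p^{e_1}z_v^{p^{e_1}-1}\,\si^{e_1}\!\bigl(B_{s-1}^{-1}(DB_s\cdot B_s^{-1}-DB_{s-1}\cdot B_{s-1}^{-1})B_{s-1}\bigr).
\]
Now $(B_s,B_{s-1})$ are precisely the analogues of $(A_s,A_{s-1})$ for the shifted tuple $(\La_1,\dots,\La_s)$ with exponents $(e_2,\dots,e_{s+1})$, which remains admissible by the remark after Definition \ref{defN}. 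Applying the inductive hypothesis to this sub-tuple at level $s-1$ with $\ell=0$ yields $DB_s\cdot B_s^{-1}-DB_{s-1}\cdot B_{s-1}^{-1}\equiv 0\pmod{p^{s-1}}$; combined with the explicit factor $p^{e_1}\geq p$, this gives $DK'\cdot K'^{-1}\equiv 0\pmod{p^{s-1+e_1}}\subseteq p^s\Z_p$, closing the induction.

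I expect the main obstacle to be the bookkeeping around the non-polynomial matrices $B_s^{-1}$, whose entries are rational in $z$ with denominators $\det B_s$. The hypothesis of Theorem \ref{thm 1.6}(ii) guarantees these determinants are units modulo $p$, so matrix inversion preserves $p$-adic congruences; still, some care is needed to verify that every step (Leibniz rule, replacing $A_s^{-1}$ by $K'^{-1}A_{s-1}^{-1}$ modulo $p^s$, multiplying by $\si^{e_1}(B_s)$ to clear the denominator in Theorem \ref{thm 1.6}(ii), etc.) stays within the appropriately localized ring where the notion of congruence modulo $p^s$ is unambiguous.
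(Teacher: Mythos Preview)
Your argument is correct and is genuinely different from the paper's. Both proofs first reduce to $\ell=0$ via the chain rule and then induct on $s$, but the inductive step is organized very differently. The paper expands both $A\big(\sum_{a=1}^{s+1}e_a,\Dl_0,\Dl_{s+1},W_s\big)$ and $A\big(\sum_{a=1}^{s}e_a,\Dl_0,\Dl_{s},W_{s-1}\big)$ through the ghost decomposition of Lemma~\ref{lem 2.5}(ii), applies $D$ term by term via the Leibniz rule, and then matches the resulting two sums separately: the ``$D(V_{j-1})$'' sums via congruence~\eqref{17} together with $V_{j-1}\equiv 0\pmod{p^{j-1}}$, and the ``$D(W^{(j)})$'' sums via the inductive hypothesis applied to each shifted tuple $(\La_j,\dots,\La_s)$. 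You bypass the ghosts entirely: you take Theorem~\ref{thm 1.6}(ii) as a black box in the multiplicative form $A_s\equiv A_{s-1}\,\si^{e_1}(B_{s-1}^{-1}B_s)\pmod{p^s}$, differentiate it once, and reduce everything to showing $D\si^{e_1}(B_{s-1}^{-1}B_s)\cdot\si^{e_1}(B_s^{-1}B_{s-1})\equiv 0\pmod{p^s}$, which follows immediately from the explicit factor $p^{e_1}$ and the inductive hypothesis for the single shifted tuple $(\La_1,\dots,\La_l)$.

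Your route is shorter and conceptually tidier; the paper's route is more explicit about how each ghost contributes and dovetails naturally with the proof of Theorem~\ref{thm der2} on second derivatives (which again unpacks the ghost sums). One small point worth making explicit in your write-up: the induction on $s$ must be stated uniformly over all admissible tuples satisfying the hypotheses, since at step $s$ you invoke the step-$(s-1)$ statement for the shifted tuple rather than the original one. You allude to this (``the inductive hypothesis to this sub-tuple''), and the paper's remark after Definition~\ref{defN} guarantees admissibility is inherited, so the scheme is sound; just say so up front. Your closing caveat about working in the localization at the relevant determinants is exactly the right framework in which all the manipulations (Leibniz rule, inversion, passing $\si^{e_1}$ through inverses) preserve congruences modulo $p^s$.
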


\begin{proof}
Notice that it is sufficient to establish the congruences \eqref{Der} for $\ell=0$,
 as the general $\ell$ case follows from \eqref{1.5}. So, we assume that $\ell=0$ and 
proceed by induction on $s\ge0$. For $s=0$  the statement is trivially true.

Using part (ii) of Lemma \ref{lem 2.5} we can write
\bean
\label{con3}
&
D\big(A\big(\sum_{a=1}^{s+1}e_a, \Dl_0,\Dl_{s+1}, W_s\big)\big)
\cdot A\big(\sum_{a=1}^{s+1}e_a, \Dl_0,\Dl_{s+1}, W_s\big)^{-1} 
=
\\ 
&
\notag
=
\sum_{j=1}^{s+1}
D\big(A\big(\sum_{a=1}^{j}e_a ,\Dl_0,\Dl_{j+1}, V_{j-1}\big)\big) \cdot 
\si^{\sum_{a=1}^{j}e_a}\big(A\big(\sum_{a=j+1}^{s+1}e_{a},\Dl_j,\Dl_{s+1}, 
 W_s^{(j)}\big)\big)
\cdot
\\
&
\notag
\cdot
\,
A\big(\sum_{a=1}^{s+1}e_a, \Dl_0,\Dl_{s+1}, W_s\big)^{-1} +
\\ 
&
\notag
+
\sum_{j=1}^{s+1}
A\big(\sum_{a=1}^{j}e_a ,\Dl_0,\Dl_{j+1}, V_{j-1}) \cdot 
D\big(\si^{\sum_{a=1}^{j}e_a}\big(A\big(\sum_{a=j+1}^{s+1}e_{a},\Dl_j,\Dl_{s+1}, 
 W_s^{(j)}\big)\big)\big)
\cdot
\\
&
\notag
\cdot
\,
A\big(\sum_{a=1}^{s+1}e_a, \Dl_0,\Dl_{s+1}, W_s\big)^{-1}
\eean
and
\bean
\label{con4}
&
D\big(A\big(\sum_{a=1}^{s}e_a, \Dl_0,\Dl_{s}, W_{s-1}\big)\big)
\cdot 
A\big(\sum_{a=1}^{s}e_a, \Dl_0,\Dl_{s}, W_{s-1}\big)^{-1}=
\\
\notag
&
=
\sum_{j=1}^s
D\big(A\big(\sum_{a=1}^{j}e_a ,\Dl_0,\Dl_{j+1}, V_{j-1}\big)\big)
 \cdot \si^{\sum_{a=1}^{j}e_a}\big(A\big(\sum_{a=j+1}^{s}e_{a},\Dl_j,\Dl_{s}, 
 W_{s-1}^{(j)}\big)\big)
\cdot
\\
&
\notag
\cdot
\,
A\big(\sum_{a=1}^{s}e_a, \Dl_0,\Dl_{s}, W_{s-1}\big)^{-1} +
\\
\notag
&
+ \sum_{j=1}^s
A\big(\sum_{a=1}^{j}e_a ,\Dl_0,\Dl_{j+1}, V_{j-1}) \cdot
D\big( \si^{\sum_{a=1}^{j}e_a}\big(A\big(\sum_{a=j+1}^{s}e_{a},\Dl_j,\Dl_{s}, 
 W_{s-1}^{(j)}\big) \big)\big)
\cdot
\\
&
\notag
\cdot
\,
A\big(\sum_{a=1}^{s}e_a, \Dl_0,\Dl_{s}, W_{s-1}\big)^{-1}.
\eean

The summands corresponding to $j=s+1$ in \eqref{con3}  vanish modulo $p^s$
and can be ignored since $V_s \equiv 0\pmod{p^s}$.

For the same reason 
\bean
\label{saj}
&
D\big(A\big(\sum_{a=1}^{j}e_a ,\Dl_0,\Dl_{j+1}, V_{j-1}\big)\big)\equiv0\pmod{p^{j-1}}.
\eean

We also have
\bean
\label{17a}
&
\phantom{aaa}
\si^{\sum_{a=1}^{j}e_a}\big(A\big(\sum_{a=j+1}^{s+1}e_{a},\Dl_j,\Dl_{s+1}, W_s^{(j)}\big)\big)
 \cdot
A\big(\sum_{a=1}^{s+1}e_a,\Dl_0, \Dl_{s+1}, W_s \big)^{-1}
 \equiv
\\
&
\notag
\phantom{aaa}
\equiv 
\si^{\sum_{a=1}^{j}e_a}\big(A\big(\sum_{a=j+1}^{s}e_{a},\Dl_j,\Dl_{s}, 
 W_{s-1}^{(j)}\big)\big)\cdot A\big(\sum_{a=1}^{s}e_a,\Dl_0, \Dl_{s}, W_{s-1} \big)^{-1}
 \pmod{p^{s-j+1}}.
\eean
This follows from \eqref{17}, in which we take $j+1$ and $s+1$ for $j$ and $s$ and use 
$W_s$ instead of $W_{s+1}^{(1)}$.

Multiplying congruences \eqref{saj} and \eqref{17a} we get
\bean
\label{17b}
&
D\big(A\big(\sum_{a=1}^{j}e_a ,\Dl_0,\Dl_{j+1}, V_{j-1}\big)\big)\cdot
\si^{\sum_{a=1}^{j}e_a}\big(A\big(\sum_{a=j+1}^{s+1}e_{a},\Dl_j,\Dl_{s+1}, W_s^{(j)}\big)\big)
 \cdot
 \\
 &
 \notag
\cdot\,A\big(\sum_{a=1}^{s+1}e_a,\Dl_0, \Dl_{s+1}, W_s \big)^{-1}
 \equiv
\\
&
\notag
\equiv 
D\big(A\big(\sum_{a=1}^{j}e_a ,\Dl_0,\Dl_{j+1}, V_{j-1}\big)\big)
\cdot \si^{\sum_{a=1}^{j}e_a}\big(A\big(\sum_{a=j+1}^{s}e_{a},\Dl_j,\Dl_{s}, 
 W_{s-1}^{(j)}\big)\big)\cdot
 \\
 \notag
 &
 \cdot\,
  A\big(\sum_{a=1}^{s}e_a,\Dl_0, \Dl_{s}, W_{s-1} \big)^{-1} \pmod{p^s}.
\eean
Congruence \eqref{17b} implies that the first sum in \eqref{con3} is congruent to the first sum in \eqref{con4} modulo $p^s$.

To match the second sums we recall the inductive hypothesis in the form
\bean
\label{18.1}
&
\\
&
\notag
D\big(\si^{\sum_{a=1}^{j}e_a}
\big(A\big(\sum_{a=j+1}^{s+1}e_a, \Dl_j,\Dl_{s+1}, W_s^{(j)}\big)\big)\big) \cdot
\phantom{aaaaaaaaaaaa}
\\
&
\notag
\cdot\,
 \si^{\sum_{a=1}^{j}e_a}\big(A\big(\sum_{a=j+1}^{s+1}e_a, \Dl_j,\Dl_{s+1}, W_s^{(j)}\big)\big)^{-1}
\equiv
\\&
\notag
\equiv\,
D\big(\si^{\sum_{a=1}^{j}e_a}\big(A\big(\sum_{a=j+1}^{s}e_a, \Dl_j,\Dl_{s}, W_{s-1}^{(j)}\big)\big)\big) \cdot
\phantom{aaaaaaaaa}
 \\
 \notag
 &
 \phantom{aaaaaaaaa}
 \cdot\,
 \si^{\sum_{a=1}^{j}e_a}\big(A\big(\sum_{a=j+1}^{s}e_a, \Dl_j,\Dl_{s}, W_{s-1}^{(j)}\big)\big)^{-1}
 \pmod{p^s},
\eean
and notice that both sides in \eqref{18.1} are congruent to zero modulo $\si^{\sum_{a=1}^{j}e_a}$ 
by formula \eqref{1.5}. 
Therefore, multiplying congruences \eqref{18.1} and \eqref{17a}  we obtain
\bea
&
\notag
D\big(\si^{\sum_{a=1}^{j}e_a}
\big(A\big(\sum_{a=j+1}^{s+1}e_a, \Dl_j,\Dl_{s+1}, W_s^{(j)}\big)\big)\big) \cdot
A\big(\sum_{a=1}^{s+1}e_a,\Dl_0, \Dl_{s+1}, W_s \big)^{-1}
\equiv
\\&
\notag
\equiv
D\big(\si^{\sum_{a=1}^{j}e_a}\big(A\big(\sum_{a=j+1}^{s}e_a, \Dl_j,\Dl_{s}, W_{s-1}^{(j)}\big)\big)\big) \cdot
  A\big(\sum_{a=1}^{s}e_a,\Dl_0, \Dl_{s}, W_{s-1} \big)^{-1} \pmod{p^s}.
\eea
Multiplying both sides of this congruence by $A\big(\sum_{a=1}^{j}e_a ,\Dl_0,\Dl_{j+1}, V_{j-1}) $
we conclude that the second sum in \eqref{con3} is congruent to the second sum in \eqref{con4} modulo $p^s$.
The theorem is proved. 
\end{proof}

\vsk.2>
There are similar congruences for higher order derivatives of the matrices
\linebreak
$A\big(\sum_{a=1}^{s+1}e_a, \Dl_0,\Dl_{s+1}, W_s\big)$.
We restrict ourselves with the second order derivatives.

\begin{thm}
\label{thm der2}
Let $(\La_0(t,z),\La_1(t,z), \dots, \La_l(t,z))$ be a $(\Dl, {\bf e})$-admissible tuple of Laurent polynomials in 
$\Z_p[x^{\pm1}]=\Z_p[t^{\pm1},z^{\pm1}]$. 
Then under the assumptions  of part \textup{(ii)} of Theorem \textup{\ref{thm 1.6}} we have
\bean
\label{Der2}
&
D_u\big(D_v\big(A\big(\sum_{a=1}^{s+1}e_a,\Dl_0,\Dl_{s+1},W_{s}\big)\big)\big)
\,\cdot\,
A\big(\sum_{a=1}^{s+1}e_a, \Dl_0,\Dl_{s+1},W_{s}\big)^{-1}
\equiv
\\
 &
\notag
\equiv
D_u\big(D_v\big(A\big(\sum_{a=1}^{s}e_a, \Dl_0,\Dl_{s},W_{s-1}\big)\big)\big)
\cdot
A\big(\sum_{a=1}^{s}e_a, \Dl_0,\Dl_{s},W_{s-1}\big)^{-1}
\pmod{p^{s}}
\eean
for all $1\leq u, v\leq n$ and $0\leq s\leq l$.
\end{thm}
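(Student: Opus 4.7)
The plan is to reduce the second-derivative congruence \eqref{Der2} to the first-derivative congruence of Theorem \ref{thm der} via the Leibniz rule. Abbreviate $B_s := A\bigl(\sum_{a=1}^{s+1}e_a, \Dl_0,\Dl_{s+1},W_s\bigr)$ and $B_{s-1} := A\bigl(\sum_{a=1}^{s}e_a, \Dl_0,\Dl_{s},W_{s-1}\bigr)$. Differentiating $B\cdot B^{-1}=I$ gives $D_u(B^{-1})=-B^{-1}D_u(B)B^{-1}$, so
\begin{equation*}
D_uD_v(B)\cdot B^{-1} \;=\; D_u\bigl(D_v(B)\cdot B^{-1}\bigr) \;+\; D_v(B)\cdot B^{-1}\cdot D_u(B)\cdot B^{-1}.
\end{equation*}
Applied to $B_s$ and $B_{s-1}$, the desired congruence splits into two pieces: matching each of the two summands on the right for $B_s$ with the corresponding summand for $B_{s-1}$ modulo $p^s$.

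For the \emph{product summand}, Theorem \ref{thm der} with $\ell=0$ gives
\begin{equation*}
D_u(B_s)\,B_s^{-1} \equiv D_u(B_{s-1})\,B_{s-1}^{-1} \pmod{p^s},
\end{equation*}
and likewise with $v$ in place of $u$. The factorization $X_1X_2-Y_1Y_2=(X_1-Y_1)X_2+Y_1(X_2-Y_2)$, carried out in the localization $R$ of $\Z_p[z^{\pm1}]$ obtained by inverting $\det B_s\cdot\det B_{s-1}$ (Laurent polynomials nonzero modulo~$p$ by the hypotheses of Theorem \ref{thm 1.6}(ii)), then shows the $B_s$-product is congruent to the $B_{s-1}$-product modulo $p^s$.

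For the \emph{derivative summand}, I apply $D_u$ to the first-derivative congruence above. The derivation $D_u$ preserves $p^s$-divisibility on $\Z_p[z^{\pm1}]$ tautologically, and the quotient rule extends this to $R$: if $F=p^s h/d^j$ then $D_u(F)=p^s\bigl(D_u(h)/d^j-j h D_u(d)/d^{j+1}\bigr)$ is again $p^s$ times an element of $R$. Hence
\begin{equation*}
D_u\bigl(D_v(B_s)\,B_s^{-1}\bigr) \equiv D_u\bigl(D_v(B_{s-1})\,B_{s-1}^{-1}\bigr) \pmod{p^s},
\end{equation*}
and summing with the product-summand congruence yields \eqref{Der2}.

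The main obstacle I expect is not computational but the careful bookkeeping of denominators: $D_v(B)B^{-1}$ is not a matrix of Laurent polynomials but a matrix of rational functions, and one must verify that Leibniz, the product congruence, and the differentiation of congruences are all consistent in the appropriate localization. This is the same bookkeeping already carried out in the proof of Theorem \ref{thm der}, so no new technical ingredient is required beyond repeating it one level higher. The argument also generalizes verbatim to a $\sigma^\ell$-twisted statement with modulus $p^{s+\ell}$ via \eqref{1.5}, should that be needed.
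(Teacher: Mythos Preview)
Your proposal is correct and follows essentially the same approach as the paper's proof: both differentiate the first-derivative congruence of Theorem~\ref{thm der} by $D_u$, expand via the Leibniz rule and $D_u(B^{-1})=-B^{-1}D_u(B)B^{-1}$, and then cancel the product terms $D_v(B)B^{-1}D_u(B)B^{-1}$ using Theorem~\ref{thm der} once more for each of $u$ and $v$. Your explicit attention to the localization $R$ in which the congruences live is a welcome clarification of the same bookkeeping the paper performs implicitly.
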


\begin{proof}

Notice that, for an invertible  matrix $F(z)$  and a derivation $D$,
we have $D(F^{-1})=-F^{-1}\,D(F)\,F^{-1}$.

We apply the derivation $D_u$ to congruence \eqref{Der} with $D=D_v$ :
\bea
&
D_u\big(D_v\big(A\big(\sum_{a=1}^{s+1}e_a, \Dl_0,\Dl_{s+1}, W_s\big)\big)\big)
\cdot A\big(\sum_{a=1}^{s+1}e_a, \Dl_0,\Dl_{s+1}, W_s\big)^{-1} 
+
\\
&
+\,D_v\big(A\big(\sum_{a=1}^{s+1}e_a, \Dl_0,\Dl_{s+1}, W_s\big)\big) 
\cdot A\big(\sum_{a=1}^{s+1}e_a, \Dl_0,\Dl_{s+1}, W_s\big)^{-1} \cdot
\\
&
\cdot\, 
D_u\big(A\big(\sum_{a=1}^{s+1}e_a, \Dl_0,\Dl_{s+1}, W_s\big)\big)
\cdot A\big(\sum_{a=1}^{s+1}e_a, \Dl_0,\Dl_{s+1}, W_s\big)^{-1} 
\equiv
\\
&
\notag
\equiv
D_u\big(D_v\big(A\big(\sum_{a=1}^{s}e_a, \Dl_0,\Dl_{s}, W_{s-1}\big)\big)\big)
\cdot A\big(\sum_{a=1}^{s}e_a, \Dl_0,\Dl_{s}, W_{s-1}\big)^{-1} +
\\
&
+\,
D_v\big(A\big(\sum_{a=1}^{s}e_a, \Dl_0,\Dl_{s}, W_{s-1}\big)\big)
\cdot A\big(\sum_{a=1}^{s}e_a, \Dl_0,\Dl_{s}, W_{s-1}\big)^{-1}\cdot
\\&
\cdot\,
D_u\big(A\big(\sum_{a=1}^{s}e_a, \Dl_0,\Dl_{s}, W_{s-1}\big)\big)
\cdot A\big(\sum_{a=1}^{s}e_a, \Dl_0,\Dl_{s}, W_{s-1}\big)^{-1}
\eea
modulo $p^s$.
It remains to apply \eqref{Der} with $D=D_u$ and $D=D_v$ and  
$\ell=0$ to see that the second terms on both sides agree modulo~$p^s$.
After their cancellation we are left with the required congruences in~\eqref{Der2}.
\end{proof}

\begin{rem}
The results of Section \ref{sec 2} in the case  
$ {\bf e}=(e_1, \dots, e_{l})=(1,\dots,1)$ and 
$\Dl=(\Dl_0, \dots,  \Dl_l)$ such that $\Dl_0 =\dots =  \Dl_l$
were obtained in \cite{VZ2}.

\end{rem}

\section{Convergence}
\label{sec 3}

\subsection{Unramified extensions of $\Q_p$}

We fix  an algebraic closure $\overline{\Q_p}$ of $\Q_p$.
For every $m$, there is a unique unramified extension of $\Q_p$ in 
 $\overline{\Q_p}$ of degree $m$, denoted by $\Q_p^{(m)}$.
This can be obtained by attaching to $\Q_p$ a primitive root of $1$ of order $p^m-1$.
The norm $|\cdot|_p$ on $\Q_p$ extends to a norm $|\cdot|_p$ on 
$\Q_p^{(m)}$.
Let 
\bea
\Z_p^{(m)} = \{ a\in \Q_p^{(m)} \mid |a|_p\leq 1\}
\eea
denote the ring of integers in $\Q_p^{(m)}$. The ring $\Z_p^{(m)}$
has the unique maximal ideal 
\bea
\mathbb M_p^{(m)} = \{ a\in \Q_p^{(m)} \mid |a|_p <1\},
\eea
such that $\mathbb Z_p^{(m)}\big/ \mathbb M_p^{(m)}$ is isomorphic to the finite field
$\F_{p^m}$.

For every $u\in\F_{p^m}$ there is a unique $\tilde u\in \mathbb Z_p^{(m)}$ that is a lift of $u$ and such that 
$\tilde u^{p^m}=\tilde u$. The element $\tilde u$ is called the Teichmuller lift of $u$.

\subsection{Domain $\frak D_B$}
For $u\in\F_{p^m}$ and $r>0$ denote
\bea
D_{u,r} = \{ a\in \Z_p^{(m)}\mid |a-\tilde u|_p<r\}\,.
\eea
We have the partition
\bea
\Z_p^{(m)} = \bigcup_{u\in\F_{p^m}} D_{u,1}\,.
\eea

Recall  $z=(z_1,\dots,z_n)$. For $B(z) \in \Z[z]$, define
\bea
\frak D_B \ =\ \{ a\in (\Z_p^{(m)})^n\,  \mid \  |B(a)|_p=1\} .
\eea
Let $\bar B(z)$ be the projection of $B(z)$ to $\F_p[z]\subset \F_{p^m}[z]$.
Then $\frak D_B$ is the union of unit polydiscs,
\bea
\frak D_B = \bigcup_{\substack{u_1,\dots,u_n\in \F_{p^m}\\  \bar B(u_1,\dots, u_n)\ne 0}} \  D_{u_1,1}\times \dots
\times D_{u_n,1}\,.
\eea
For any $k$ we have
\begin{align}
\notag
\{ a\in (\Z_p^{(m)})^n \mid \ |B(a^{p^k})|_p=1\}
&=\bigcup_{\substack{u_1,\dots,u_n\in \F_{p^m}\\  \si^k(\bar B(u_1,\dots, u_n))\ne 0}} \  D_{u_1,1}\times \dots
\times D_{u_n,1} =
\\
\notag
&=\bigcup_{\substack{u_1,\dots,u_n\in F_{p^m}\\  \bar B(u_1,\dots, u_n)\ne 0}} \  D_{u_1,1}\times \dots
\times D_{u_n,1} =
 \frak D_B \,.
\end{align}

\begin{lem}
[{\cite[Lemma 6.1]{VZ2}}]
\label{lem nonempty}

Let $\bar B_1(z), \dots, \bar B_k(z) \in  \F_p[z]$
be nonzero polynomials  such that 
$\deg \bar B_j(z)\leq d$, $j=1,\dots,k$,  for some $d$.
If $kd+1< p^m$, then  the set
\bea
 \{ a\in (\F_{p^m})^n\mid \bar B_j(a) \ne 0, \, j=1,\dots, f\}
\eea 
is nonempty.

\end{lem}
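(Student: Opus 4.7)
The plan is to reduce the statement to a single nonzero polynomial and apply a Schwartz--Zippel style counting argument.

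First, form the product $\bar B(z) := \prod_{j=1}^{k} \bar B_j(z) \in \F_p[z] \subset \F_{p^m}[z]$. Since $\F_{p^m}[z]$ is an integral domain and each $\bar B_j$ is nonzero, $\bar B$ is itself a nonzero polynomial, with $\deg \bar B \le kd$. Since a product in a field vanishes if and only if one of the factors vanishes, the set to be proved nonempty is exactly the complement $\{a \in (\F_{p^m})^n \mid \bar B(a) \ne 0\}$, so it is enough to exhibit a single point $a \in (\F_{p^m})^n$ at which $\bar B$ does not vanish.

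For this I would establish the following standard fact by induction on $n$: if $C \in \F_{p^m}[z_1,\dots,z_n]$ is nonzero of total degree $D$ with $D < p^m$, then $C$ does not vanish identically on $(\F_{p^m})^n$. The base case $n=1$ is immediate, since a nonzero polynomial of degree $D < p^m$ over the field $\F_{p^m}$ has at most $D < p^m$ roots, so some element of $\F_{p^m}$ is a non-root. For the inductive step, write
\[
C(z_1,\dots,z_n) \;=\; \sum_{i=0}^{D'} C_i(z_1,\dots,z_{n-1})\, z_n^i,
\]
with $D'$ the degree of $C$ in $z_n$ and leading coefficient $C_{D'} \ne 0$ of total degree $\le D - D' \le D < p^m$. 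By the inductive hypothesis there exist $a_1,\dots,a_{n-1} \in \F_{p^m}$ with $C_{D'}(a_1,\dots,a_{n-1}) \ne 0$; then $C(a_1,\dots,a_{n-1},z_n)$ is a nonzero polynomial in $z_n$ of degree $D' \le D < p^m$, so the $n=1$ case furnishes $a_n \in \F_{p^m}$ with $C(a_1,\dots,a_n) \ne 0$.

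Applying this claim to $\bar B$ with $D = \deg \bar B \le kd$, the hypothesis $kd + 1 < p^m$ gives $D \le kd < p^m$, so the claim produces the desired point. There is no real obstacle here beyond the bookkeeping in the induction; the hypothesis $kd+1 < p^m$ is in fact slightly stronger than what the argument needs, since the bound $kd < p^m$ already suffices.
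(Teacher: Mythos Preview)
Your argument is correct. The paper does not supply its own proof of this lemma: it is stated with a citation to \cite[Lemma~6.1]{VZ2} and no argument is given here, so there is nothing to compare against beyond noting that your Schwartz--Zippel style induction is the standard way to establish the result. Your observation that the hypothesis $kd+1<p^m$ is marginally stronger than needed (the argument only uses $kd<p^m$) is also correct.
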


\subsection{Uniqueness theorem}

Let  $\frak D \subset (\Z_p^{(m)})^n$ be the union of some of the
unit polydiscs
\linebreak
$ D_{u_1,1}\times \dots \times D_{u_n,1}$\,, where $u_1,\dots,u_n\in\F_{p^m}$.

Let $(F_i(z))_{i=1}^\infty$ and $(G_i(z))_{i=1}^\infty$ be two sequences of rational functions
on $(\F_{p^m})^n$. Assume that each of the rational functions has the form 
$P(z)/Q(z)$, where $P(z), Q(z)\in\Z[z]$, and for any
polydisc $ D_{u_1,1}\times \dots \times D_{u_n,1}\,\subset \frak D$, we have
$|Q(\tilde u_1,\dots,\tilde u_n)|_p=1$,  which implies that
\bea
|Q(a_1,\dots,a_n)|_p=1,\qquad \forall\ (a_1,\dots,a_n)\in \frak D.
\eea
Assume that  the sequences 
$(F_i(z))_{i=1}^\infty$ and $(G_i(z))_{i=1}^\infty$
 uniformly converge on $\frak D$ to analytic functions, which we denote by $F(z)$ and $G(z)$, respectively. 

\begin{thm}
[\cite{VZ2}]

\label{thm U}
Under these assumptions, if $F(z)=G(z)$ on an open 
nonempty subset of~$\frak D$. Then $F(z)=G(z)$ on~$\frak D$.

\end{thm}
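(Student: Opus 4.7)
The plan is to reduce the statement to the classical $p$-adic identity principle for convergent power series on a single unit polydisc, followed by a propagation argument that exploits the rational--function structure of the approximating sequences.

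First, I would set $H_i := F_i - G_i$ and $H := F - G$. Since each $F_i$ and $G_i$ has the form $P/Q$ with $P,Q \in \Z[z]$ and $|Q(a)|_p = 1$ for every $a \in \frak D$, the difference $H_i$ is again rational with denominator a unit modulo $p$ on $\frak D$. Consequently, on each unit polydisc $D_u := D_{u_1,1}\times\dots\times D_{u_n,1} \subset \frak D$, $H_i$ admits a convergent Taylor expansion around the Teichmuller center $\tilde u = (\tilde u_1, \dots, \tilde u_n)$ with coefficients in $\Z_p^{(m)}$. Uniform convergence $H_i \to H$ on $D_u$ is equivalent to Gauss-norm convergence of the power series, so the limit $H$ is itself a convergent $\Z_p^{(m)}$-valued power series on $D_u$ whose Taylor coefficients are the $p$-adic limits of the corresponding Taylor coefficients of $H_i$.

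Next, I would select a unit polydisc $D_{u^{(0)}} \subset \frak D$ for which $U \cap D_{u^{(0)}}$ is a nonempty open subset of $D_{u^{(0)}}$; such a polydisc exists since $U$ is nonempty and open in $\frak D$ and $\frak D$ is a disjoint union of polydiscs. On $D_{u^{(0)}}$ the function $H$ vanishes on an open $p$-adic ball of positive radius, so by the classical identity principle for power series over $\Q_p^{(m)}$---a convergent series $\sum_\alpha c_\alpha (z-\tilde u^{(0)})^\alpha$ vanishing on such a ball must have every $c_\alpha$ equal to zero---we obtain $H \equiv 0$ on all of $D_{u^{(0)}}$.

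The main obstacle, and the delicate step, is to extend $H \equiv 0$ from $D_{u^{(0)}}$ to each remaining unit polydisc $D_v \subset \frak D$. A priori, since the polydiscs comprising $\frak D$ are pairwise disjoint, an analytic function on $\frak D$ is merely a tuple of independent convergent power series, and the single-polydisc identity principle offers no bridge. The bridging input must come from the rationality of $H_i = R_i/S_i$ with $R_i, S_i \in \Z[z]$ and $|S_i|_p = 1$ on $\frak D$: each $H_i$ is a single global algebraic object, and its Taylor expansion at the center $\tilde u^{(0)}$ determines the rational function $H_i$ completely, and hence its Taylor expansion at $\tilde v$. The crux of the proof is to turn this algebraic rigidity into a quantitative $p$-adic statement, showing that $p$-adic smallness of the Taylor coefficients of $H_i$ at $\tilde u^{(0)}$---which we already have from $\|H_i\|_{D_{u^{(0)}}} \to 0$---propagates to $p$-adic smallness of the Taylor coefficients at $\tilde v$ via an explicit bound coming from the rational representation $R_i/S_i$ together with the uniform convergence of $H_i$ on $D_v$. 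This propagation, which essentially uses the assumption that the denominators are units on all of $\frak D$ simultaneously, then forces $H \equiv 0$ on $D_v$ and completes the proof.
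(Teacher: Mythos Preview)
The paper does not supply its own proof of this statement; it is quoted from \cite{VZ2}. So there is no in-paper argument to compare against, and one can only assess your attempt on its own terms.

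Your first three steps (reduce to $H=F-G$, expand on a single polydisc, apply the identity principle there) are standard and correct. The gap is exactly where you place it, and it is not merely ``delicate'': under the hypotheses as written it cannot be closed. Your proposed mechanism is that rationality of $H_i=R_i/S_i$ lets smallness on $D_{u^{(0)}}$ propagate to $D_v$, because the Taylor series at $\tilde u^{(0)}$ determines $H_i$ globally. Determination is not the problem; quantitative control is. Take $n=m=1$, $\frak D=D_{0,1}\cup D_{1,1}\subset\Z_p$, and $F_i(z)=z^{p^i}$, $G_i(z)=0$. These are polynomials in $\Z[z]$ with denominator $1$, a unit everywhere. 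On $D_{0,1}=p\Z_p$ one has $|z^{p^i}|_p\le p^{-p^i}\to 0$; on $D_{1,1}=1+p\Z_p$ one checks $v_p\big((1+pw)^{p^i}-1\big)\ge i+1$, so $z^{p^i}\to 1$ uniformly. Thus $F_i\to F$ and $G_i\to G=0$ uniformly on $\frak D$, with $F=G$ on the open set $D_{0,1}$ but $F\equiv 1\ne G$ on $D_{1,1}$. No ``explicit bound coming from the rational representation'' salvages this, since the representation is as tame as one could ask.

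This example shows that the statement, read literally as reproduced here, already fails; the version in \cite{VZ2} presumably carries an extra hypothesis (for instance a uniform degree bound on the $P$'s and $Q$'s, a common fixed denominator, or the specific Hasse--Witt structure of the sequences actually used later) that makes the cross-polydisc propagation legitimate. Your write-up would need to identify that hypothesis and use it explicitly; the bare appeal to rationality with unit denominators is not enough.
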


\subsection{Infinite tuples}

Let ${\bf e}=(e_1, e_2, \dots)$ be an infinite tuple  of positive integers.
Let $\Dl=(\Dl_0, \dots,  \Dl_l)$ be an infinite tuple  of nonempty finite subsets of $\Z^r$  of the same size
$\# \Dl_j=g$ for some  positive integer $g$. Let  $\La=(\La_0(x),\La_1(x), \dots)$ 
be an infinite tuple of Laurent polynomials in $\Z_p[x^{\pm1}]=\Z_p[t^{\pm1},z^{\pm1}]$.

\vsk.2>

Assume that the tuple $\La$ is $(\Dl,{\bf e})$-admissible.

\vsk.2>

Assume that each of the tuples ${\bf e}, \Dl, \La$ have only finitely many distinct elements. This means that 
there is a finite set of 4-tuples 
\bean
\label{tuples}
\mc T = \{(e^j, \bar \Dl^j, \tilde D^j,\La^j)  \mid j=1,\dots,k\}
\eean
such that for any $l\geq 0$ the 4-tuple $(e_{l+1}, \Dl_l,\Dl_{l+1}, \La_l)$ equals one of the 4-tuples
in $\mc T$.

\begin{defn}
\label{def F}
The $(\Dl,{\bf e})$-admissible tuple $\La$ is called \emph{nondegenerate}, if
 for any $i=1,\dots,k$, the Laurent polynomial 
 \bea
 \det A\big (e^j, \bar\Dl^j,\tilde \Dl^j, \La^j\big) \ \in \  \Z_p[z^{\pm1}]
 \eea
  is nonzero modulo~$p$. 

\end{defn}

Recall the notation:
\bea
W_s(x)
&:=&
\La_0(x)\La_1(x)^{p^{e_1}}\cdots \La_s(x)^{p^{e_1+\dots+e_s}},
\\
W_s^{(j)}(x)
&:=&
\La_j(x)\La_{j+1}(x)^{p^{e_{j+1}}}\cdots 
\La_s(x)^{p^{e_{j+1}+\dots +e_s}}.
\eea
If a $(\Dl,{\bf e})$-admissible tuple $\La$ is nondegenerate,  then
for any $0\leq j\leq s$,  the Laurent polynomials 
$\det A\big(\sum_{a=j+1}^{s+1}e_a, \Dl_j,\Dl_{s+1}, W_s^{(j)}\big)\in \Z_p[z^{\pm1}]$ 
are not congruent to zero modulo~$p$ and we may consider congruences involving the inverse matrices
$A\big(\sum_{a=j+1}^{s+1}e_a, \Dl_j,\Dl_{s+1}, W_s^{(j)}\big)^{-1}$.

\subsection{Domain of convergence}

Assume that $\La$ is an infinite nondegenerate  $(\Dl,{\bf e})$-admissible tuple and
 $m$ is a positive integer.
Denote

\bea
\frak D^{(m)}
= \{a \in (\Z_p^{(m)})^{n} \ \mid \ 
|\det A\big (e^j, \bar\Dl^j,\tilde \Dl^j, \La^j(t,a) \big)|_p=1,
 \,\,j=1,\dots,k\}.
\eea

\vsk.2>

\begin{lem}
\label{lem |det|}
For any $0\leq j\leq s$ and $a\in \frak D^{(m)}$ we have
\bea
\Big\vert\det 
A\Big({\sum}_{a=j+1}^{s+1}e_{a},\Dl_j,\Dl_{s+1}, W_s^{(j)}(t,a)\Big)
\Big\vert_p =1.
\eea
\qed
\end{lem}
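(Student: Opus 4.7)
The plan is to deduce the claim from the mod-$p$ factorization supplied by part~(i) of Theorem~\ref{thm 1.6}, combined with the fact that the set $\frak D^{(m)}$ is stable under the substitution $a\mapsto a^{p^\ell}$. First, I would apply Theorem~\ref{thm 1.6}(i) to the sub-tuple $(\La_j,\La_{j+1},\dots,\La_s)$, which is admissible by the remark following Definition~\ref{defN}. This produces
\[
A\big(e_{j+1}+\dots+e_{s+1}, \Dl_j,\Dl_{s+1}, W_s^{(j)}\big)
\equiv \prod_{i=j}^{s}\si^{e_{j+1}+\dots+e_i}\!\big(A(e_{i+1},\Dl_i,\Dl_{i+1},\La_i)\big)\pmod p,
\]
so after passing to determinants the claim reduces to showing that each factor $\si^{e_{j+1}+\dots+e_i}\!\big(\det A(e_{i+1},\Dl_i,\Dl_{i+1},\La_i)\big)$ evaluates to a $p$-adic unit at $a$.

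Second, by the finiteness hypothesis~\eqref{tuples}, each 4-tuple $(e_{i+1},\Dl_i,\Dl_{i+1},\La_i)$ equals one of the tuples $(e^{j'},\bar\Dl^{j'},\tilde\Dl^{j'},\La^{j'})$ in $\mc T$. Hence the polynomial $\det A(e_{i+1},\Dl_i,\Dl_{i+1},\La_i)$ coincides with one of the $\det A(e^{j'},\bar\Dl^{j'},\tilde\Dl^{j'},\La^{j'})$ appearing in the definition of $\frak D^{(m)}$, and by the definition of $\frak D^{(m)}$ it takes values of $p$-adic absolute value $1$ at every point of $\frak D^{(m)}$.

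Third, I would verify that the Frobenius twist $\si^\ell$ preserves this unit property on $\frak D^{(m)}$: for $f\in\Z_p[z^{\pm1}]$ with $|f(a)|_p=1$, the reduction $\bar f\in\F_p[z^{\pm1}]$ satisfies $\bar f(\bar a)\neq 0$ in $\F_{p^m}$, and since $\bar f$ has coefficients in $\F_p$, the Frobenius automorphism of $\F_{p^m}$ gives $\bar f(\bar a^{p^\ell})=\bar f(\bar a)^{p^\ell}\neq 0$. This is exactly the mechanism already used in Section~\ref{sec 3} to identify $\{a:|B(a^{p^k})|_p=1\}$ with $\frak D_B$. Consequently, $|\si^\ell(f)(a)|_p=|f(a^{p^\ell})|_p=1$.

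Combining the three steps, the right-hand side of the factorization evaluates at $a$ to a product of $p$-adic units, and by the ultrametric inequality the mod-$p$ congruence then forces the left-hand determinant to be a $p$-adic unit as well. There is no serious obstacle beyond the bookkeeping of the shift of indices in Theorem~\ref{thm 1.6}(i) and the observation that the finitely many nondegeneracy conditions encoded in $\mc T$ transfer to every factor along the infinite tuple thanks to the Frobenius invariance of $\frak D^{(m)}$.
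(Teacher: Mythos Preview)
Your argument is correct and is precisely the intended one: the paper records this lemma with only a \qed, but the implicit reasoning is exactly the mod-$p$ factorization from Theorem~\ref{thm 1.6}(i) (already used in the same way inside the proof of part~(ii) there) together with the Frobenius stability of $\frak D^{(m)}$ explained in Section~\ref{sec 3}. Nothing needs to be added.
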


\begin{cor}
All entries of $A\big(\sum_{a=j+1}^{s+1}e_{a},\Dl_j,\Dl_{s+1}, W_s^{(j)}(t,z)\big)^{-1}$
 are rational functions in $z$ regular on $\frak D^{(m)}$.  
For every $a\in\frak D^{(m)}$  all entries of 
$A\big(\sum_{a=j+1}^{s+1}e_{a},\Dl_j,\Dl_{s+1}, W_s^{(j)}(t,a)\big)$
 and 
 $A\big(\sum_{a=j+1}^{s+1}e_{a},\Dl_j,\Dl_{s+1}, W_s^{(j)}(t,a)\big)^{-1}$ are elements of $\Z_p^{(m)}$.
\qed

\end{cor}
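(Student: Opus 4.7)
The plan is to derive the corollary directly from Cramer's rule combined with Lemma \ref{lem |det|}; there is no essential difficulty, only bookkeeping.

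First I would note that since each $\La_i(x)\in\Z_p[x^{\pm1}]$, the Laurent polynomial
$W_s^{(j)}(t,z)$ lies in $\Z_p[t^{\pm1},z^{\pm1}]$. The entries of
$A\big(\sum_{a=j+1}^{s+1}e_{a},\Dl_j,\Dl_{s+1}, W_s^{(j)}\big)$
are, by definition, coefficients of prescribed $t$-monomials in this polynomial, hence lie in
$\Z_p[z^{\pm1}]$. By Cramer's rule, each entry of the inverse matrix has the form
$C_{ij}(z)/\det A\big(\sum_{a=j+1}^{s+1}e_{a},\Dl_j,\Dl_{s+1}, W_s^{(j)}\big)$,
where the cofactor $C_{ij}(z)$ is again an element of $\Z_p[z^{\pm1}]$.

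Next, Lemma \ref{lem |det|} asserts that the denominator has $p$-adic absolute value $1$ at every
$a\in\frak D^{(m)}$. In particular it is nonzero there, so the entries of the inverse are well-defined rational functions of $z$ regular on $\frak D^{(m)}$, which is the first claim of the corollary. For the integrality claim, observe that evaluating any element of $\Z_p[z^{\pm1}]$ at a point of $(\Z_p^{(m)})^n$ produces an element of $\Z_p^{(m)}$, since $\Z_p\subset\Z_p^{(m)}$ and $\Z_p^{(m)}$ is closed under the ring operations. This handles the entries of $A\big(\sum_{a=j+1}^{s+1}e_{a},\Dl_j,\Dl_{s+1}, W_s^{(j)}(t,a)\big)$ directly. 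For the inverse matrix, the cofactors evaluate into $\Z_p^{(m)}$ while the determinant evaluates to an element of $p$-adic norm $1$, i.e.\ a unit of $\Z_p^{(m)}$; the quotient therefore again lies in $\Z_p^{(m)}$.

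There is no substantive obstacle beyond keeping track of indices: the entire content is encapsulated in Lemma \ref{lem |det|}, whose proof is the real step. That lemma in turn rests on the multiplicativity modulo $p$ of Hasse--Witt matrices (part (i) of Theorem \ref{thm 1.6}), which factors the relevant determinant into $\si$-shifts of the finitely many basic determinants $\det A(e^j,\bar\Dl^j,\tilde\Dl^j,\La^j)$ from the finite set $\mc T$ in \eqref{tuples}, each a unit of $\Z_p^{(m)}$ at points of $\frak D^{(m)}$ by the definition of the domain together with the nondegeneracy of $\La$. Once Lemma \ref{lem |det|} is available, the corollary is immediate.
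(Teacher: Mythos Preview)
Your proposal is correct and follows exactly the approach the paper intends: the corollary is stated immediately after Lemma \ref{lem |det|} with only a \qed, and your argument via Cramer's rule together with that lemma is precisely the implicit reasoning. Your additional paragraph unpacking how Lemma \ref{lem |det|} itself follows from part (i) of Theorem \ref{thm 1.6} and the definition of $\frak D^{(m)}$ is accurate and goes beyond what the paper spells out.
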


\vsk.2>

\begin{thm}
\label{thm conv}
Let $\La$ be an infinite nondegenerate
$(\Dl,{\bf e})$-admissible tuple.  Consider the sequence of $g\times g$ matrices 
\bean
\label{sec of m}
\phantom{aaa}
\Big( A\Big({\sum}_{a=1}^{s+1}e_{a},\Dl_0,\Dl_{s+1}, W_s(t,z)\Big)
\cdot
\si^{e_1}\Big(A\Big({\sum}_{a=2}^{s+1}e_{a},\Dl_1,\Dl_{s+1}, W_s^{(1)}(t,z)\Big)\Big)^{-1} \Big)_{s\geq 0}
\eean
whose entries are rational functions in $z$ regular on the domain $\frak D^{(m)}$.
This sequence uniformly converges on $\frak D^{(m)}$ as $s\to\infty$ to an analytic
$g\times g$ matrix with values in $\Z_p^{(m)}$. 
Denote this matrix by  $\mc A_\La(z)$.
For $a\in\frak D^{(m)}$ we have
\bean
\label{det 1}
\Big\vert \det \mc A_\La(a) \Big\vert_p=1\,
\eean
and the matrix $\mc A_\La(a)$ is invertible. 
\end{thm}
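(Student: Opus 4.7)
The plan is to reduce convergence to the telescoping estimate supplied by Theorem \ref{thm 1.6}(ii) and to control the denominators arising from matrix inversion via Lemma \ref{lem |det|}. Write
\bea
M_s(z) &=& A\big({\textstyle\sum}_{a=1}^{s+1}e_{a},\Dl_0,\Dl_{s+1}, W_s(t,z)\big)
\\
&&\cdot\,\si^{e_1}\big(A\big({\textstyle\sum}_{a=2}^{s+1}e_{a},\Dl_1,\Dl_{s+1}, W_s^{(1)}(t,z)\big)\big)^{-1}.
\eea
First I would note that nondegeneracy of $\La$ together with Lemma \ref{lem |det|} ensures that, for every $s\ge 1$, the determinant of the inner inverse factor is a Laurent polynomial of $p$-adic norm $1$ on $\frak D^{(m)}$. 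Hence by Cramer's rule each entry of $M_s(z)$ is a rational function $P(z)/Q(z)$ whose denominator $Q$ has $|Q(a)|_p=1$ for all $a\in\frak D^{(m)}$. In particular every $M_s$ takes values in $\Z_p^{(m)}$ on $\frak D^{(m)}$ and defines an analytic function there.

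Next, apply Theorem \ref{thm 1.6}(ii) with $l=s$, which gives the congruence
\be
M_s(z)\,\equiv\, M_{s-1}(z)\pmod{p^s}
\ee
after clearing the common denominators (as described in the paragraph following \eqref{ff=gg}). Since those common denominators are units of $\Z_p^{(m)}$ at every $a\in\frak D^{(m)}$, evaluation yields
\be
|M_s(a)-M_{s-1}(a)|_p\le p^{-s}\qquad\text{for all }a\in\frak D^{(m)}.
\ee
This is a uniform Cauchy condition: the partial sums $M_s=M_0+\sum_{j=1}^s(M_j-M_{j-1})$ converge uniformly on $\frak D^{(m)}$ to a limit $\mc A_\La(z)$, with values in $\Z_p^{(m)}$ (since $\Z_p^{(m)}$ is complete). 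Uniform $p$-adic limits of analytic functions on polydiscs are analytic, so $\mc A_\La$ is analytic on $\frak D^{(m)}$.

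Finally, for invertibility and \eqref{det 1}: by Lemma \ref{lem |det|} applied to both factors defining $M_s$, we have $|\det M_s(a)|_p=1$ for every $s\ge 0$ and every $a\in\frak D^{(m)}$. The determinant is continuous, and the uniform convergence $M_s\to\mc A_\La$ on $\frak D^{(m)}$ entails $\det M_s(a)\to\det \mc A_\La(a)$ in $\Z_p^{(m)}$; since $|\cdot|_p$ is continuous on $\Z_p^{(m)}$ and takes the constant value $1$ along the sequence, $|\det \mc A_\La(a)|_p=1$ and $\mc A_\La(a)$ is invertible.

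The main obstacle, and really the only delicate point, is bookkeeping the passage from the symbolic matrix congruence of Theorem \ref{thm 1.6}(ii) (stated after clearing denominators) to a pointwise $p$-adic Cauchy estimate. Once one checks that all denominators involved are units of $\Z_p^{(m)}$ on $\frak D^{(m)}$---which is exactly the content of the definition of $\frak D^{(m)}$ and Lemma \ref{lem |det|}---the rest is a standard uniform-convergence argument combined with continuity of $\det$ and $|\cdot|_p$.
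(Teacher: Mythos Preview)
Your proposal is correct and follows essentially the same approach as the paper: regularity of $M_s$ on $\frak D^{(m)}$ via the unit determinants (the paper cites Theorem~\ref{thm 1.6}(i) directly, you cite its corollary Lemma~\ref{lem |det|}), the Cauchy estimate from Theorem~\ref{thm 1.6}(ii), and \eqref{det 1} from the factorization modulo $p$. Your write-up is in fact more explicit than the paper's about the passage from the formal congruence to the uniform pointwise estimate and about the continuity argument for $\det$.
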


\vsk.2>

\begin{proof}
By part (i) of Theorem \ref{thm 1.6} we have
$\vert\det \si^{e_1}\big(A\big({\sum}_{a=2}^{s+1}e_{a},\Dl_1,\Dl_{s+1}, W_s^{(1)}(t,a)\big)\big)\vert_p =1$
 for $a\in\frak D^{(m)}$.
Hence the matrix in \eqref{sec of m} is a matrix of rational functions
in $z$ regular on $\frak D^{(m)}$. Moreover, if $a\in\frak D^{(m)}$,
 then every entry of this matrix is an element of $\Z_p^{(m)}$.
The uniform convergence on $\frak D^{(m)}$ of the sequence \eqref{sec of m}
is a corollary of part (ii) of Theorem \ref{thm 1.6}.
Equation \eqref{det 1} follows from part (i) of Theorem \ref{thm 1.6}.
The theorem is proved.
\end{proof}

\vsk.2>

\begin{thm}
\label{thm conv2}
Let $\La$ be an infinite nondegenerate
$(\Dl,{\bf e})$-admissible tuple, and
$D=D_v$, $v=1,\dots,n$.
  Given $\ell\geq 0$ consider the sequence of $g\times g$ matrices 
\bea
\Big(\,D\Big(\si^\ell\Big(A\Big({\sum}_{a=1}^{s+1}e_a, \Dl_0,\Dl_{s+1}, W_s\Big)\Big)\Big)
\cdot \si^\ell\Big(A\Big({\sum}_{a=1}^{s+1}e_a, \Dl_0,\Dl_{s+1}, W_s\Big)\Big)^{-1}\,
\Big)_{s\geq 0}
\eea
whose entries are rational functions in $z$ regular on the domain $\frak D$.
This sequence uniformly converges on $\frak D$ as $s\to\infty$ to an analytic
$g\times g$ matrix with values in $\Z_p^{(m)}$. 
Denote this matrix by $\mc A_{\La,D\si^\ell}(z)$.

\end{thm}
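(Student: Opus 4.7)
The plan is to mirror the proof of Theorem \ref{thm conv} almost verbatim, substituting Theorem \ref{thm der} for Theorem \ref{thm 1.6}(ii) as the source of the essential congruence. Denote by $X_s(z)$ the $s$-th term of the sequence in question. First, I would verify that each $X_s(z)$ is a matrix of rational functions in $z$ regular on $\frak D^{(m)}$ whose values at points of $\frak D^{(m)}$ lie in $\Z_p^{(m)}$. The entries of $A\big(\sum_{a=1}^{s+1}e_a,\Dl_0,\Dl_{s+1},W_s(t,z)\big)$ are Laurent polynomials in $z$ with coefficients in $\Z_p$, and the Frobenius substitution $z\mapsto z^{p^\ell}$ keeps them in $\Z_p[z^{\pm1}]$; the derivation $D_v$ preserves these properties. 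By Lemma \ref{lem |det|} applied with $j=0$, together with the Frobenius invariance $\{a:|B(a^{p^\ell})|_p=1\}=\frak D_B$ recorded just after the definition of $\frak D_B$, we have $\big|\det \si^\ell\big(A\big(\sum_{a=1}^{s+1}e_a,\Dl_0,\Dl_{s+1},W_s(t,a)\big)\big)\big|_p=1$ for every $a\in\frak D^{(m)}$, so the inverse matrix is regular on $\frak D^{(m)}$ with $\Z_p^{(m)}$-valued entries there.

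Second, the uniform convergence is a direct consequence of Theorem \ref{thm der}. The congruence there, interpreted through the common-denominator formulation described just before Theorem \ref{thm 1.6}, states that if we write $X_s=F_1F_2^{-1}$ and $X_{s-1}=G_1G_2^{-1}$ respectively, then $F_1\cdot\det G_2-G_1\cdot\det F_2\equiv 0\pmod{p^{s+\ell}}$ as a congruence among Laurent polynomials with $\Z_p$ coefficients. Evaluating at $a\in\frak D^{(m)}$ and using $|\det F_2(a)|_p=|\det G_2(a)|_p=1$ yields the entrywise $p$-adic bound
\be
|X_s(a)-X_{s-1}(a)|_p\le p^{-(s+\ell)},
\ee
uniformly in $a\in\frak D^{(m)}$. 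Hence the sequence $(X_s)_{s\ge 0}$ is uniformly Cauchy on $\frak D^{(m)}$, and by completeness of $\Z_p^{(m)}$ it converges uniformly there to an analytic matrix $\mc A_{\La,D\si^\ell}(z)$ whose entries take values in $\Z_p^{(m)}$.

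The main obstacle is not conceptual; it is the bookkeeping of the common-denominator interpretation. One must carefully translate Theorem \ref{thm der}'s congruence of matrix rational functions modulo $p^{s+\ell}$ into a uniform $p$-adic bound on the pointwise values on $\frak D^{(m)}$, and must also check that the Frobenius twist $\si^\ell$ is compatible with regularity and integrality of the inverse. Both points are routine given the observations in the subsection introducing $\frak D_B$ and Lemma \ref{lem |det|}, so the proof of Theorem \ref{thm conv2} reduces to a direct application of Theorem \ref{thm der}, structurally parallel to the derivation of Theorem \ref{thm conv} from Theorem \ref{thm 1.6}(ii).
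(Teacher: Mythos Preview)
Your proposal is correct and takes essentially the same approach as the paper: the paper's entire proof is the single sentence ``The theorem is a corollary of Theorem \ref{thm der},'' and you have simply unpacked what that means, mirroring the structure of Theorem \ref{thm conv}'s proof with Theorem \ref{thm der} in place of Theorem \ref{thm 1.6}(ii).
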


\begin{proof} 
The theorem is a corollary of Theorem \ref{thm der}.
\end{proof}

\begin{thm}
\label{thm conv3}
Let $\La=(\La_0(x), \La_1(x), \La_2(x),\dots )$ be an infinite nondegenerate
$(\Dl,{\bf e})$-admis\-sible tuple.
  Given $\ell\geq 0$ and $1\leq u,v\leq n$, consider the sequence of $g\times g$ matrices 
\bea
\Big(\,D_u\Big(D_v\Big(A\Big({\sum}_{a=1}^{s+1}e_a, \Dl_0,\Dl_{s+1}, W_s\Big)\Big)\Big)
\cdot A\Big({\sum}_{a=1}^{s+1}e_a, \Dl_0,\Dl_{s+1}, W_s\Big)^{-1}\,\Big)_{s\geq 0}
\eea
whose entries are rational functions in $z$ regular on the domain $\frak D$.
This sequence uniformly converges on $\frak D$ as $s\to\infty$ to an analytic
$g\times g$ matrix with values in $\Z_p^{(m)}$. 
Denote this matrix by $\mc A_{\La,D_uD_v}(z)$.

\end{thm}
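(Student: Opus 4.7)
The plan is to derive this statement as a direct corollary of Theorem \ref{thm der2}, following exactly the same template that produces Theorems \ref{thm conv} and \ref{thm conv2} from Theorems \ref{thm 1.6} and \ref{thm der}, respectively. No new ingredients are needed; the strategy is to combine the pointwise regularity and integrality provided by Lemma \ref{lem |det|} with the two-step matrix congruence of Theorem \ref{thm der2}.

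First I would verify that each $g\times g$ matrix in the sequence is a matrix of rational functions in $z$ regular on $\frak D^{(m)}$ taking values in $\Z_p^{(m)}$ at every point of that domain. The numerator $D_u(D_v(A(\sum_{a=1}^{s+1}e_a,\Dl_0,\Dl_{s+1},W_s)))$ is a polynomial matrix in $z$ with $\Z_p$ coefficients, obtained by differentiating a polynomial matrix entry-wise. By Lemma \ref{lem |det|}, the determinant $\det A(\sum_{a=1}^{s+1}e_a,\Dl_0,\Dl_{s+1},W_s(t,a))$ is a $p$-adic unit for every $a\in\frak D^{(m)}$, so Cramer's rule presents the inverse matrix as a rational matrix regular on $\frak D^{(m)}$ whose entries lie in $\Z_p^{(m)}$ at every $a\in\frak D^{(m)}$. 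The product then has the same property.

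Next I would feed Theorem \ref{thm der2} directly into the argument. Writing the congruence of that theorem in the cleared form $F_1(z)\cdot\det G_2(z)\equiv G_1(z)\cdot\det F_2(z)\pmod{p^s}$ as explained just before Theorem \ref{thm 1.6}, the difference of successive terms of the sequence becomes a matrix of rational functions $P(z)/Q(z)$ where $Q(z)$ is a product of determinants that are units on $\frak D^{(m)}$ and $P(z)$ is divisible by $p^s$. Evaluating at any $a\in\frak D^{(m)}$ therefore yields the uniform $p$-adic bound of at most $p^{-s}$ on the distance between the $s$-th and $(s-1)$-st terms. This Cauchy property, uniform in $a$, gives uniform convergence on $\frak D^{(m)}$ to an analytic limit taking values in $\Z_p^{(m)}$, which will be named $\mc A_{\La,D_uD_v}(z)$.

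The main obstacle is conceptually nothing new: the substantive work is already done inside Theorem \ref{thm der2}. The only point that requires some care is the bookkeeping that converts the matrix congruence of Theorem \ref{thm der2} (a statement about polynomial numerators modulo $p^s$) into a genuine entry-wise uniform $p$-adic bound after passing through the rational normalization by $A^{-1}$; this is handled in exactly the same way as in the proof of Theorem \ref{thm conv2}, using Lemma \ref{lem |det|} to control the denominators on $\frak D^{(m)}$.
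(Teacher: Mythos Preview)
Your proposal is correct and matches the paper's approach exactly: the paper's entire proof is the single sentence ``The theorem is a corollary of Theorem \ref{thm der2}.'' Your elaboration of how Lemma \ref{lem |det|} controls the denominators and how the congruence of Theorem \ref{thm der2} yields the uniform Cauchy bound is precisely the routine unpacking of that one-line reference.
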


\begin{proof} 
The theorem is a corollary of Theorem \ref{thm der2}.
\end{proof}

\vsk.2>

Let $\La=(\La_0(x), \La_1(x), \La_2(x),\dots )$ be an infinite nondegenerate
$(\Dl,{\bf e})$-admissible tuple.
 Consider the $g\times g$ matrix valued functions
$\mc A_{\La,\frac{\der}{\der z_u}\si^0}(z)$, $\mc A_{\La,\frac{\der}{\der z_v}\si^0}(z)$
in Theorem \ref{thm conv2} and denote them by $\mc A_u(z)$, $\mc A_v(z)$, respectively.
Consider the $g\times g$ matrix valued function
$\mc A_{\La,\frac{\der}{\der z_u}\frac{\der}{\der z_v}}(z)$ in Theorem \ref{thm conv3}
and denote it by $\mc A_{u,v}(z)$.
All the three functions are analytic on $\frak D^{(m)}$.

\begin{lem}
[{\cite[Lemma 3.7]{VZ2}}]
\label{lem conv3}
We have
\bea
\frac{\der}{\der z_u}\mc A_v = \mc A_{u,v} - \mc A_v\mc A_u\,.
\eea
\qed
\end{lem}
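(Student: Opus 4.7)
The plan is to obtain the identity by differentiating the finite approximation $D_v(A_s)\,A_s^{-1}$ and then passing to the $p$-adic uniform limit. Set
\be
A_s(z) \ :=\ A\bigl({\textstyle\sum_{a=1}^{s+1}}e_a,\Dl_0,\Dl_{s+1},W_s\bigr),
\ee
so that Theorems \ref{thm conv}, \ref{thm conv2} (with $\ell=0$), and \ref{thm conv3} give, on $\frak D^{(m)}$, uniform convergence
\be
D_v(A_s)\,A_s^{-1}\to \mc A_v,\qquad D_u(A_s)\,A_s^{-1}\to \mc A_u,\qquad D_uD_v(A_s)\,A_s^{-1}\to \mc A_{u,v},
\ee
with all matrices and their inverses having entries in $\Z_p^{(m)}$ of norm at most $1$ throughout $\frak D^{(m)}$.

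First, using $D_u(A_s^{-1}) = -A_s^{-1}\,D_u(A_s)\,A_s^{-1}$, I would expand $D_u\bigl[D_v(A_s)\,A_s^{-1}\bigr]$ by the product rule, obtaining the pointwise matrix identity
\be
D_u\bigl[D_v(A_s)\,A_s^{-1}\bigr]\ =\ D_uD_v(A_s)\,A_s^{-1}\ -\ \bigl[D_v(A_s)\,A_s^{-1}\bigr]\bigl[D_u(A_s)\,A_s^{-1}\bigr]
\ee
on $\frak D^{(m)}$. Next I would take $s\to\infty$ in this identity. Because the entries of all three convergent sequences lie in the unit ball of $\Z_p^{(m)}$, uniform convergence is preserved under matrix multiplication, so the right-hand side converges uniformly on $\frak D^{(m)}$ to $\mc A_{u,v} - \mc A_v\mc A_u$ by the three convergence theorems cited above.

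The remaining point, and the main obstacle, is to interchange $D_u$ with the uniform limit, so that the left-hand side tends to $\tfrac{\der}{\der z_u}\mc A_v$. For this I would exploit the fact that $\frak D^{(m)}$ is a disjoint union of open unit polydiscs centered at Teichm\"uller lifts: on each such polydisc any rational function regular on $\frak D^{(m)}$ expands uniquely as a power series $\sum_{\al}c_\al(z-\tilde u)^{\al}$ with $c_\al\in\Z_p^{(m)}$, the sup norm on the polydisc equals $\sup_\al |c_\al|_p$, and $D_u$ acts by multiplying $c_\al$ by the integer $\al_u$, hence is a contraction in this norm. Uniform convergence on the polydisc is therefore equivalent to coefficient-wise convergence (uniformly bounded), and such convergence is preserved by $D_u$. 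Consequently the derivation commutes with the limit, the left-hand side tends to $\tfrac{\der}{\der z_u}\mc A_v$, and equating both limits yields the claimed identity.
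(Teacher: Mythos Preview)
Your argument is correct in outline and this is the natural approach; the paper itself gives no proof here, merely citing \cite[Lemma~3.7]{VZ2}. One small imprecision: on the relevant polydisc $D_{u_1,1}\times\dots\times D_{u_n,1}=\{\,|z_i-\tilde u_i|_p\le 1/p\,\}$ the sup norm of a power series $\sum_\al c_\al(z-\tilde u)^\al$ is $\sup_\al |c_\al|_p\,p^{-|\al|}$, not $\sup_\al|c_\al|_p$, and accordingly $D_u$ has operator norm $\le p$ rather than $\le1$. This does not affect your conclusion, since a bounded operator still commutes with uniform limits. Alternatively, you can avoid the analytic interchange entirely: the congruences in Theorem~\ref{thm der} are congruences of rational functions $P/Q$ with $Q$ a unit mod~$p$, and applying $D_u$ to $P/Q$ preserves a congruence $P\equiv0\pmod{p^s}$, so the sequence $D_u\bigl[D_v(A_s)A_s^{-1}\bigr]$ is itself Cauchy at the same rate; then the finite-level identity and Theorems~\ref{thm conv2},~\ref{thm conv3} identify its limit.
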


\section{\KZ/ equations and complex solutions}
\label{sec 4}

\subsection{\KZ/ equations}
Let $\g$ be a simple Lie algebra with an invariant scalar product.
The { Casimir element}  is  $\Om = {\sum}_i \,h_i\ox h_i  \in  \g \ox \g$,
where $(h_i)\subset\g$ is an orthonormal basis.
Let  $V=\otimes_{i=1}^n V_i$ be 
a tensor product of $\g$-modules, $\ka\in\C^\times$ a nonzero number.
The {\it  \KZ/ equations} is the system of differential 
equations on a $V$-valued function $I(z_1,\dots,z_n)$,
\bea
\frac{\der I}{\der z_i}\ =\ \frac 1\ka\,{\sum}_{j\ne i}\, \frac{\Om_{i,j}}{z_i-z_j} I, \qquad i=1,\dots,n,
\eea
where $\Om_{i,j}:V\to V$ is the Casimir operator acting in the $i$th and $j$th tensor factors,
see \cite{KZ, EFK}.

\vsk.2>

This system is a system of Fuchsian first order
 linear differential equations. 
  The equations are defined on the complement in $\C^n$ to the union of all diagonal hyperplanes.
 
\vsk.2>

The object of our discussion is the following particular case.  Let $n, q$ be positive integers.
 We consider the following system of differential and algebraic  equations
for a column $n$-vector $I=(I_1,\dots,I_n)$ depending on variables $z=(z_1,\dots,z_n)$\,:
\bean
\label{KZ}
\phantom{aaa}
 \frac{\partial I}{\partial z_i} \ = \
   {\frac 1 q} \sum_{j \ne i}
   \frac{\Omega_{ij}}{z_i - z_j}  I ,
\quad i = 1, \dots , n,
\qquad
I_1+\dots+I_{n}=0,
\eean
where $z=(z_1,\dots,z_n)$;
the $n\times n$-matrices $\Om_{ij}$ have the form
\bea
 \Omega_{ij} \ = \ \begin{pmatrix}
             & \vdots^{\kern-1.2mm i} &  & \vdots^{\kern-1.2mm j} &  \\
        {\scriptstyle i} \cdots & {-1} & \cdots &
            1   & \cdots \\
                   & \vdots &  & \vdots &   \\
        {\scriptstyle j} \cdots & 1 & \cdots & -1&
                 \cdots \\
                   & \vdots &  & \vdots &
                   \end{pmatrix} ,
\eea
and all other entries are zero.
 This  joint system of {\it differential and 
algebraic equations} will be called the {\it system of \KZ/  equations} in this paper.

     \vsk.2>
     
For $i=1,\dots,n$ denote
\bean
\label{GH}
&
H_i(z) =   {\frac 1q} \sum_{j\ne i}    \frac{\Omega_{ij}}{z_i - z_j}\,,
\qquad
\nabla_i^{\on{KZ}} = \frac{\der}{\der z_i} - H_i(z), \qquad i=1,\dots,n.
\eean
The linear operators $H_i(z)$ are called the Gaudin Hamiltonians.
The \KZ/ equations can be written as the system of equations,
\bea
\nabla_i^{\on{KZ}}I=0, \quad i=1,\dots,n,\qquad I_1+\dots + I_n =0.
\eea
\vsk.2>

System  \eqref{KZ} is the system of the  differential \KZ/ equations with 
parameter $\ka=q$ associated with the Lie algebra $\sll_2$ and the subspace of singular vectors of weight 
$n-2$ of the tensor power 
$(\C^2)^{\ox {n}}$ of two-dimensional irreducible $\sll_2$-modules, up to a gauge transformation, see 
this example in  \cite[Section 1.1]{V2}, see also \cite{V3}.

\subsection{Solutions over $\C$}
\label{sec 11.4}

Define the {\it master function}
\bea
\Phi(t,z) = (t-z_1)^{-1/q}\dots (t-z_n)^{-1/q}
\eea
and the column $n$-vector
\bean
\label{KZ sol} 
I^{(C)}(z) = (I_1,\dots,I_n):=
\int_{C}
\Big(\frac {\Phi(t,z)}{t-z_1}, \dots , \frac {\Phi(t,z)}{t-z_n}\Big)dt
\,,
\eean
where  $C\subset \C-\{z_1,\dots,z_n\}$  
is a contour on which the integrand  takes its initial value when $t$ encircles $C$.

\begin{thm}
The function $I^{(C)}(z)$ is a solution of system \eqref{KZ}.

\end{thm}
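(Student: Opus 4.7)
The plan is to differentiate under the integral sign and reduce each equation in \eqref{KZ} to an identity among integrals of rational multiples of the master function $\Phi(t,z)$. The two main tools are the logarithmic derivatives
$$\frac{\partial \Phi}{\partial z_i}\,=\,\frac{1}{q}\cdot\frac{\Phi}{t-z_i},\qquad \frac{\partial \Phi}{\partial t}\,=\,-\frac{1}{q}\sum_{j=1}^{n}\frac{\Phi}{t-z_j},$$
the partial-fraction identity $\tfrac{1}{(t-z_i)(t-z_k)}=\tfrac{1}{z_i-z_k}\bigl(\tfrac{1}{t-z_i}-\tfrac{1}{t-z_k}\bigr)$, and the vanishing $\int_C d(\,\cdot\,)=0$ along the cycle $C$, which follows from the defining property that the integrand returns to its initial value when $t$ traverses $C$.

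First I would verify the algebraic constraint $I_1+\dots+I_n=0$. Summing the components of the integrand yields $\sum_{i=1}^{n}\Phi/(t-z_i)$, which by the $t$-derivative formula above equals $-q\,\partial_t\Phi$; integrating a total $t$-derivative along $C$ produces zero.

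For the off-diagonal components $k\ne i$, I would differentiate under the integral sign and use $\partial_{z_i}\Phi=(1/q)\Phi/(t-z_i)$ to obtain
$$\frac{\partial I_k}{\partial z_i}\,=\,\frac{1}{q}\int_C \frac{\Phi}{(t-z_k)(t-z_i)}\,dt,$$
then apply partial fractions to rewrite the right-hand side as $\tfrac{1}{q(z_i-z_k)}(I_i-I_k)$. This is precisely the only surviving entry $(\Omega_{ik}I)_k/(q(z_i-z_k))$ in row $k$ of $H_i(z)I$, since $(\Omega_{ij}I)_k=0$ for $j\ne k$.

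The main technical point is the diagonal entry $k=i$, where differentiation under the integral sign produces $\tfrac{q+1}{q}\int_C \Phi/(t-z_i)^2\,dt$, whose pole of order two does not directly match the form of $(H_i(z)I)_i$. To absorb it I would use both logarithmic derivatives to compute
$$\partial_t\Bigl(\frac{\Phi}{t-z_i}\Bigr)\,=\,-\frac{q+1}{q}\cdot\frac{\Phi}{(t-z_i)^2}\,-\,\frac{1}{q}\sum_{j\ne i}\frac{\Phi}{(t-z_i)(t-z_j)},$$
integrate over $C$ so that the left-hand side vanishes, solve for $\int_C \Phi/(t-z_i)^2\,dt$, and expand each cross integral $\int_C \Phi/((t-z_i)(t-z_j))\,dt$ by partial fractions into $(I_i-I_j)/(z_i-z_j)$. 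Collecting terms gives $\partial I_i/\partial z_i=\tfrac{1}{q}\sum_{j\ne i}(I_j-I_i)/(z_i-z_j)=(H_i(z)I)_i$, completing the verification. This integration-by-parts step is the only nontrivial maneuver; the rest is bookkeeping with partial fractions.
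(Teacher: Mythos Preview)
Your argument is correct and is essentially the paper's proof spelled out componentwise. The paper condenses your off-diagonal and diagonal computations into the single vector identity $\nabla_i^{\on{KZ}}\bigl(\frac{\Phi}{t-z_1},\dots,\frac{\Phi}{t-z_n}\bigr)=\frac{\der}{\der t}\Psi^i$ with $\Psi^i=(0,\dots,-\frac{\Phi}{t-z_i},\dots,0)$; your integration-by-parts step for $k=i$ is precisely the $i$-th component of this identity, and the off-diagonal rows vanish identically before integration.
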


This theorem is a very particular case of the results in \cite{SV1}.

\begin{proof}  
The theorem follows from  Stokes' theorem and the two identities:
\bean
\label{i1}
-\frac 1q\,
\Big(\frac {\Phi(t,z)}{t-z_1}
 + \dots  + \frac {\Phi(t,z)}{t-z_n}\Big)\,  
=\, \frac{\der\Phi}{\der t}(t,z)\,,
\eean
\bean
\label{i2}
\Big(\frac{\der }{\der z_i}-\frac1q
\sum_{j\ne i} \frac {\Omega_{i,j}}{z_i-z_j} \Big)
\Big(\frac {\Phi(t,z)}{t-z_1}, \dots, \frac {\Phi(t,z)}{t-z_n}\Big)\,  
= \frac{\der \Psi^i}{\der t} (t,z),
\eean
where  $\Psi^i(t,z)$ is the column $n$-vector   $(0,\dots,0,-\frac{\Phi(t,z)}{t-z_i},0,\dots,0)$ with 
the nonzero element at the $i$-th place. 
\end{proof}

\begin{thm} [{cf.~\cite[Formula (1.3)]{V1}}]
\label{thm dim}

All solutions of system \eqref{KZ} have this form. 
Namely, the complex vector space of solutions of the form \eqref{KZ sol} is $(n-1)$-dimensional.

\end{thm}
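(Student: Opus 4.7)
The plan is to verify the dimension count in two parts: first, bound the local solution space of the joint system \eqref{KZ} from above by $n-1$, and then produce $n-1$ linearly independent integral solutions of the form \eqref{KZ sol}.

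For the upper bound, I would first observe that each matrix $\Omega_{ij}$ has every column summing to zero: column $i$ has entries $-1$ and $+1$ in rows $i$ and $j$, column $j$ has entries $+1$ and $-1$ in rows $i$ and $j$, and every other column is zero. Consequently, if $I(z)$ satisfies the differential part of \eqref{KZ}, then $\partial_i(I_1+\dots+I_n)=0$ for every $i$, so the algebraic constraint is preserved by the flow. Combined with the standard flatness of the KZ connection (coming from $\partial_i H_j=\partial_j H_i$ and $[H_i,H_j]=0$, the latter via the infinitesimal braid relation $[\Omega_{ij},\Omega_{ik}+\Omega_{jk}]=0$), this shows that near any regular point $z^0$ the map $I\mapsto I(z^0)$ identifies the joint solution space with a linear subspace of the hyperplane $\sum_k I_k=0$. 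Since the constant vector $I\equiv(1,\dots,1)$ solves the differential system (because $\Omega_{ij}\mathbf{1}=0$) while violating the constraint, the differential system alone has an $n$-dimensional local solution space, and therefore the joint system has solution space of dimension exactly $n-1$.

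For the lower bound, I would use the classical twisted homology picture. For fixed generic $z$ let $\mathcal{L}_z$ be the rank-one local system on $\C\setminus\{z_1,\dots,z_n\}$ whose flat sections are the branches of $\Phi(t,z)$; its monodromy around each $z_j$ equals $e^{-2\pi i/q}\ne 1$. The identities \eqref{i1}--\eqref{i2} encode exactly that the vector-valued $1$-form with components $\Phi(t,z)/(t-z_j)$ is a twisted cocycle, so the assignment $[C]\mapsto I^{(C)}(z)$ factors through $H_1(\C\setminus\{z_1,\dots,z_n\},\mathcal{L}_z)$. An Euler characteristic computation on the punctured sphere, with nontrivial monodromy at every puncture, gives $\dim H_1=n-1$. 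Taking the Pochhammer double loops $C_j$ around consecutive pairs $(z_j,z_{j+1})$ for $j=1,\dots,n-1$ as a basis of this twisted homology, it remains only to verify that the resulting integral solutions $I^{(C_1)}(z),\dots,I^{(C_{n-1})}(z)$ are linearly independent.

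The main obstacle is precisely this final nondegeneracy step: exhibiting a nonvanishing $(n-1)\times(n-1)$ minor of the period matrix. The cleanest route is to degenerate to the limit where $z_1,\dots,z_n$ are real and widely spaced, in which each entry $I^{(C_j)}_k$ reduces asymptotically to a Beta-function value and the minor reduces to a Selberg-type product that is manifestly nonzero; alternatively, one can invoke the general nondegeneracy of the hypergeometric pairing from \cite{SV1}, which is exactly what the cited formula \cite[Formula~(1.3)]{V1} records in this case.
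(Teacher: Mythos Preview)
The paper gives no proof of this theorem beyond the citation to \cite[Formula~(1.3)]{V1}; the statement is simply asserted with that reference. Your outline is correct and is essentially the standard argument one would supply: the flatness of the \KZ/ connection together with the invariance of the hyperplane $\sum_k I_k=0$ shows the joint solution space has dimension exactly $n-1$, and the integral solutions \eqref{KZ sol} span once one knows the period matrix is nondegenerate. That last nondegeneracy is exactly the content of the cited determinant formula in \cite{V1}, so your proposal and the paper's (implicit) argument converge at the same point.

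One small remark on your exposition: the sentence ``since the constant vector $(1,\dots,1)$ solves the differential system \dots\ the differential system alone has an $n$-dimensional local solution space'' is logically inverted. The $n$-dimensionality comes directly from flatness of a rank-$n$ connection; the constant solution is only used to confirm the constraint is genuinely codimension one. Also, for the Euler-characteristic count you should note that in the paper's setting $n=gq+1\equiv 1\pmod q$, so the monodromy of $\mathcal L_z$ at $\infty$ is $e^{2\pi i/q}\ne 1$ as well, which is needed for $\dim H_1=n-1$.
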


\subsection{Solutions as vectors of first derivatives}
\label{sec 11.5}

Consider the integral
\bea
T(z) = T^{(C)}(z) =
\int_C
\Phi(t,z) \,dt.
\eea
Then
\bea
I^{(C)}(z) 
=
\,
q\,
\Big(\frac {\der T^{(C)}}{\der z_1}, \dots ,
\frac {\der T^{(C)}}{\der z_n}\Big).
\eea
Denote
$\nabla T =
\Big(\frac {\der T}{\der z_1},\dots,  \frac {\der T}{\der z_n}\Big)$.
Then the column gradient vector $\nabla T$ of the function $T(z)$ satisfies the following system of  \KZ/ equations
\bea
\nabla_i^{\on{KZ}} \nabla T =0, \quad i=1,\dots,n,\qquad  
\frac {\der T}{\der z_1} +\dots + \frac {\der T}{\der z_n}=0.
\eea
This is a system of second order linear differential equations on the function $T(z)$.

\section{Solutions modulo powers of $p$}
\label{sec 5}

\subsection{Assumptions}
\label{sec ass}
${}$

Let $p, q$, $p>q$, be prime numbers.  Let $e$ be the order of $p$ modulo $q$, 
that is, the least positive integer such that
$p^e\equiv 1\pmod{q}$. Hence $(p^e-1)/q$ is a positive integer.
Let $n=gq+1$ for some positive integer $g$. 
Assume that $p^e> n$ and $p\geq n+q-2$.

In this paper we consider the system of \KZ/ equations
\eqref{KZ} with $n=gq+1$ and $\ka = q$ and study polynomial solutions of the \KZ/ equations
modulo powers of $p$.

\subsection{Polynomial solutions}
\label{sec:new}

For an integer $s\geq 1$ define the {\it master polynomial}
\bea
\Phi_s(t,z) = \big((t-z_1)\dots(t-z_n)\big)^{(p^{es}-1)/q}.
\eea
 For $\ell=1,\dots, g$ define the column $n$-vector
\bea
I_{s,\ell} (z)=(I_{s,\ell,1}, \dots, I_{s,\ell.n})
\eea
as the coefficient of $t^{\ell p^{es}-1}$ in the column $n$-vector of polynomials 
$\big(\frac{\Phi_s(t,z)}{t-z_1}, \dots, \frac {\Phi_s(t,z)}{t-z_n}\big)$.
Notice that 
\bea
\deg_t \frac{\Phi_s(t,z)}{t-z_i} = (gq+1)\frac{p^{es}-1}q-1
= gp^{es}-1 + \frac{p^{es}-1}q -g.
\eea
If $\ell >g$,  then the polynomial
$\frac{\Phi_s(t,z)}{t-z_i}$ does not have the monomial 
$t^{\ell p^{es}-1}$.

\vsk.2>

\begin{thm} [cf. \cite{V5, VZ2}]
\label{thm 7.3}
The column $n$-vector  $I_{s,\ell}(z)$ of polynomials in $z$ is a solution of the system of \KZ/ equations
\eqref{KZ}  modulo $p^{es}$.

\end{thm}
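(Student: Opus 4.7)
The plan is to adapt the proof of the complex-case theorem in Section~\ref{sec 11.4} (where the master function is $\Phi = \prod(t-z_j)^{-1/q}$) to the polynomial master function $\Phi_s = \prod(t-z_j)^\alpha$ with $\alpha := (p^{es}-1)/q$. Two substitutions occur. First, Stokes' theorem is replaced by the algebraic operation $[t^{\ell p^{es}-1}]$ of extracting a polynomial coefficient, since $[t^{\ell p^{es}-1}]\partial_t F = \ell p^{es}\cdot [t^{\ell p^{es}}]F \equiv 0 \pmod{p^{es}}$ for any $F$. Second, the complex exponent $-1/q$ is replaced by the integer $\alpha$; the relation $q\alpha + 1 = p^{es}$, together with invertibility of $q$ modulo $p^{es}$ (since $p \neq q$), yields $\alpha \equiv -1/q \pmod{p^{es}}$ in $\Z[1/q]/p^{es}\Z[1/q]$. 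One then expects the complex identities \eqref{i1} and \eqref{i2} to persist modulo $p^{es}$ with $\Phi$ replaced by $\Phi_s$.

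Set $P_k := \Phi_s/(t-z_k) = (t-z_k)^{\alpha-1}\prod_{j\neq k}(t-z_j)^\alpha$ and $P := (P_1,\dots,P_n)^T$; by construction $I_{s,\ell}(z) = [t^{\ell p^{es}-1}]P(t,z)$. The main step is to verify the polynomial identity
\begin{equation*}
\nabla_i^{\on{KZ}}P \,-\, \partial_t\Psi^i \,=\, \frac{p^{es}}{q}\cdot R^{(i)}(t,z),
\end{equation*}
where $\Psi^i$ has $-P_i$ at the $i$-th position and zeros elsewhere, and $R^{(i)}$ is a vector of polynomials in $t,z$ with integer coefficients. This is a direct calculation from $\partial P_k/\partial z_i = -\alpha P_k/(t-z_i)$ for $k\neq i$, $\partial P_i/\partial z_i = -(\alpha-1)P_i/(t-z_i)$, and the elementary relation $(P_i - P_k)/(z_i - z_k) = P_k/(t-z_i) = \Phi_s/((t-z_i)(t-z_k))$; the prefactor $p^{es}/q$ arises from the combination $\alpha + 1/q = p^{es}/q$ in the off-diagonal entries, and from the same combination together with $-P_i + \alpha^{-1}\partial_t\Phi_s = \sum_{j\neq i}P_j$ in the diagonal entry. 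The companion identity $\alpha\sum_k P_k = \partial_t\Phi_s$, which plays the role of \eqref{i1}, is exact.

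To conclude, extract the coefficient of $t^{\ell p^{es}-1}$ from both identities. Since $\nabla_i^{\on{KZ}}$ is independent of $t$, the displayed identity gives
\begin{equation*}
\nabla_i^{\on{KZ}}I_{s,\ell} \,=\, [t^{\ell p^{es}-1}]\partial_t\Psi^i \,+\, \tfrac{p^{es}}{q}\cdot[t^{\ell p^{es}-1}]R^{(i)},
\end{equation*}
and both terms on the right are divisible by $p^{es}$: the first because its only nonzero component equals $-\ell p^{es}\cdot[t^{\ell p^{es}}]P_i$, the second by the $p^{es}/q$ prefactor together with invertibility of $q$. From the companion identity, $\alpha\sum_k I_{s,\ell,k} = \ell p^{es}\cdot[t^{\ell p^{es}}]\Phi_s \equiv 0\pmod{p^{es}}$, whence $\sum_k I_{s,\ell,k}\equiv 0\pmod{p^{es}}$ after dividing by the $p$-adic unit $\alpha$. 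The only nonroutine step is the direct verification of the displayed polynomial identity; there is no deep $p$-adic input, only careful tracking of the arithmetic observation $q\alpha + 1 = p^{es}$, which upgrades the complex identity \eqref{i2} into a congruence modulo $p^{es}$.
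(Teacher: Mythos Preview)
Your proof is correct and follows essentially the same approach as the paper: both establish the polynomial analogues of identities \eqref{i1} and \eqref{i2} with exponent $\alpha=(p^{es}-1)/q$ and then extract the coefficient of $t^{\ell p^{es}-1}$, using that this coefficient-extraction kills $\partial_t$-exact terms modulo $p^{es}$ and that $\alpha\equiv -1/q\pmod{p^{es}}$. The only cosmetic difference is that the paper records the exact identity $(\partial_{z_i}+\alpha\sum_{j\ne i}\frac{\Omega_{ij}}{z_i-z_j})P=\partial_t\Psi_s^i$ and leaves the passage from $+\alpha$ to $-1/q$ implicit, whereas you write $\nabla_i^{\on{KZ}}$ directly and isolate the explicit error term $\frac{p^{es}}{q}R^{(i)}$; these are two packagings of the same computation.
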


\vsk.2>

We call the column $n$-vectors  $I_{s,\ell}(z)$, $\ell=1,\dots,g$,
 the {\it $p^{es}$-hypergeometric solutions} of the \KZ/ equations
\eqref{KZ}.

\vsk.2>
\begin{proof}

We have the following modifications of identities \eqref{i1}, \eqref{i2}\,:
\bea
\frac {p^{es}-1}q\,
\Big(\frac {\Phi_s(t,z)}{t-z_1}
 + \dots + \frac {\Phi_s(t,z)}{t-z_n}\Big)\,  
=\, \frac{\der\Phi_s}{\der t}(t,z)\,,
\eea
\bea
\Big(\frac{\der }{\der z_i} + \frac {p^{es}-1}q
\sum_{j\ne i} \frac {\Omega_{i,j}}{z_i-z_j} \Big)
\Big(\frac {\Phi_s(t,z)}{t-z_1}, \dots, \frac {\Phi_s(t,z)}{t-z_n}\Big)\,  
= \frac{\der \Psi_s^i}{\der t} (t,z),
\eea
where  $\Psi_s^i(t,z)$ is the column $n$-vector   $(0,\dots,0,-\frac{\Phi_s(t,z)}{t-z_i},0,\dots,0)$ with 
the nonzero element at the $i$-th place. Theorem \ref{thm 7.3} follows from these identities.
\end{proof}

Consider the $n\times g$ matrix
\bea
I_s(z) = (I_{s,1},\dots,I_{s,g}) = \big(I_{s, \ell, i}\big)_{\ell = 1,\dots,g}^{i=1,\dots,n}\ ,
\eea
where $I_{s, \ell, i}$ stays at the $\ell$-th column and $i$-th row. The matrix $I_s(z)$ satisfies the \KZ/ equations,
\bea
\nabla_i^{\on{KZ}}I_s(z) =0, \quad i=1,\dots,n,\qquad I_{s,\ell,1}+\dots + I_{s,\ell,n}(z) =0, 
\quad \ell=1,\dots,g,
\eea
modulo $p^{es}$.

\subsection{Coefficients of solutions}

Consider the lexicographical ordering of monomials
\linebreak
$z_1^{d_1}\dots z_{n}^{d_{n}}$. We  have $z_1>\dots > z_{n}$ and  so on.
For a nonzero Laurent polynomial
$f(z)=\sum_{d_1,\dots,d_{n}} a_{d_1,\dots,d_{n}} z_1^{d_1}$
\dots $z_{n}^{d_{n}}$\, with coefficients in $\Z$\,,\
the nonzero summand 
$a_{d_1,\dots,d_{n}} z_1^{d_1}\dots z_n^{d_{n}}$ with the largest monomial
$z_1^{d_1}$ \dots $z_{n}^{d_{n}}$ is called
 the { leading term} of $f(z)$.
 
 \vsk.2>
If $f(z)$ and $g(z)$ are two nonzero Laurent polynomials, then
the leading term of $f(z)g(z)$ equals the product of the leading terms of $f(z)$ and $g(z)$.

\begin{lem}
\label{lem leco}
For  $l=1,\dots,g$,  the leading term of the vector-polynomial $I_{1,\ell}$ equals
\linebreak
$C_{\ell}\cdot (z_1\dots z_{q(g-\ell)+1})^{(p^e-1)/q}/z_{q(g-\ell)+1}^\ell$,
\bean
\label{leco}
C_{\ell} \,=\,
\pm\binom{(p^e-1)/q-1}{\ell-1}\Big(0, \dots, 0, 1,\frac{p^e-1}{q\ell},\dots, \frac{p^e-1}{q\ell}\Big),
\eean
where $\frac{p^e-1}{q\ell}$ is repeated $q\ell$ times, and
\bean
\label{bino}
\binom{(p^e-1)/q-1}{\ell-1}\not\equiv 0 \pmod{p}.
\eean

\end{lem}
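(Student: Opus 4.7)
Set $K=(p^e-1)/q$, so that $\Phi_1(t,z)=\prod_{j=1}^n(t-z_j)^K$ and
$$\frac{\Phi_1(t,z)}{t-z_i}=(t-z_i)^{K-1}\prod_{j\ne i}(t-z_j)^K.$$
Expanding each factor by the binomial theorem and equating the total $t$-degree with $\ell p^e-1$ (using $n=qg+1$ and $qK=p^e-1$) gives
$$I_{1,\ell,i}(z)=\sum_{a}(-1)^{|a|}\binom{K-1}{a_i}\prod_{j\ne i}\binom{K}{a_j}\,z^a,$$
the sum being over $a=(a_1,\dots,a_n)$ with $0\le a_i\le K-1$, $0\le a_j\le K$ for $j\ne i$, and
$$|a|:=\sum_{j=1}^n a_j=nK-\ell p^e=q(g-\ell)K+K-\ell.$$

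The plan is to find the lex-maximal multi-index $a$ in each component greedily. If $i>q(g-\ell)$, then nothing obstructs taking $a_1=\cdots=a_{q(g-\ell)}=K$, using up $q(g-\ell)K$ and leaving a remainder of $K-\ell$; the lex-maximal completion is then $a_{q(g-\ell)+1}=K-\ell$ with later entries zero, which is allowed (since $K-\ell\le K-1$ even if $i=q(g-\ell)+1$). This yields the monomial $M:=z_1^K\cdots z_{q(g-\ell)}^K z_{q(g-\ell)+1}^{K-\ell}$. If instead $i\le q(g-\ell)$, then $a_i\le K-1$ forces a deficit of $1$ in the first $q(g-\ell)$ coordinates; the lex-maximal $a$ must have $a_i=K-1$ and redistribute the missing unit to position $q(g-\ell)+1$, giving a monomial strictly smaller than $M$ (they disagree first at position $i\le q(g-\ell)$, where the exponent is $K-1<K$). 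Consequently $M$ is the global leading monomial of $I_{1,\ell}$, and it occurs only in components $i=q(g-\ell)+1,\dots,n$, matching the zero pattern of $C_\ell$.

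Now I would compute the coefficient of $M$ in each surviving component. For $i=q(g-\ell)+1$ the relevant $a$ has $a_j=K$ for $j<i$, $a_i=K-\ell$, $a_j=0$ for $j>i$, so the coefficient is $(-1)^{|a|}\binom{K-1}{K-\ell}=\pm\binom{K-1}{\ell-1}$. For $q(g-\ell)+1<i\le n$ the relevant $a$ has $a_j=K$ for $j\le q(g-\ell)$, $a_{q(g-\ell)+1}=K-\ell$, $a_i=0$, and all other entries zero; the coefficient is $(-1)^{|a|}\binom{K}{K-\ell}=\pm\binom{K}{\ell}$, with the same overall sign since $|a|$ is the same. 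Using the identity $\binom{K}{\ell}\big/\binom{K-1}{\ell-1}=K/\ell=(p^e-1)/(q\ell)$ produces the stated formula \eqref{leco}.

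The final step is the non-vanishing \eqref{bino}. I would show $K\bmod p\ge g$: writing $K\bmod p=a$ with $1\le a\le p-1$, the congruence $qK\equiv -1\pmod p$ gives $qa+1=kp$ for some $k\ge1$, hence $qa\ge p-1\ge n-1=gq$, i.e.\ $a\ge g$. Because $\ell-1\le g-1<g\le a\le p-1$ is a single base-$p$ digit, and the last base-$p$ digit of $K-1$ equals $a-1\ge\ell-1$, Lucas's theorem gives $\binom{K-1}{\ell-1}\not\equiv 0\pmod p$. The only part that requires real care is the greedy comparison in the second paragraph — everything else is bookkeeping and a short $p$-adic argument — so this case analysis distinguishing $i\le q(g-\ell)$ from $i>q(g-\ell)$ is the main obstacle to be executed cleanly.
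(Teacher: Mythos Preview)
Your proof is correct and follows the same approach as the paper. The paper dismisses formula~\eqref{leco} as ``obtained by inspection,'' whereas you actually carry out the greedy lex-maximal analysis; for~\eqref{bino} both you and the paper use Lucas's theorem via the lowest base-$p$ digit of $K-1$ being at least $g-1$, with only cosmetic differences in how that bound is derived (you use $qa\ge p-1\ge n-1=gq$, which is justified by the standing assumption $p\ge n+q-2\ge n$; you might cite that assumption explicitly).
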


\begin{proof}
Formula \eqref{leco} is obtained by inspection. To prove \eqref{bino}
consider the $p$-ary presentation  $(p^e-1)/q-1 = a_0 + a_1p +\dots$ with
$0\leq a_i\leq p-1$.  The inequality  \eqref{bino} follows from the inequality
$a_0\geq g-1$ and Lucas theorem.

We prove that $a_0\geq g-1$ under our assumption $p\geq n+q-2$.  Indeed, 
$p^e = 1 + q(1+a_0) + qa_1p+ \dots$. Hence
$1+ q(1+a_0)\geq p$. Let $p=qk+r$ for some integers $k,r$, $1\leq r\leq q-1$. Then
$1+q(1+a_0)\geq qk+r$ or
$q(1+a_0)\geq qk+r-1\geq kq$ or $a_0\geq k-1$. Hence 
$a_0\geq g-1$ if  $k\geq g$.

The inequality 
$p\geq n+q-2$ can be written as
$kq+r\geq gq+1+q-2$ or
$kq\geq gq+ q-r-1$. Hence $k\geq g$. The lemma is proved.
\end{proof}

\begin{lem}
\label{lem minor}

Consider the $n\times g$ matrix
$I_1(z) = (I_{1,1},\dots,I_{1,g})$ and its $g\times g$ minor $M(z)$ in rows with
indices $q(g-\ell) +1$ where $\ell=1,\dots,g$.
Then  $M(z)$ is a homogeneous polynomial of degree
\bean
\label{deg M}
d_M  = \frac{p^e-1}{q}\cdot\frac{qg^2+2g-qg}2 - \frac{g(g+1)}2\,,
\eean
and the polynomial $M(z)$ is nonzero modulo $p$.

\end{lem}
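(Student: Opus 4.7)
The plan is to deduce both assertions from Lemma \ref{lem leco} combined with the homogeneity of the master polynomial $\Phi_1(t,z)$. Since $\Phi_1(t,z)=\prod_{j=1}^n(t-z_j)^{(p^e-1)/q}$ is homogeneous of total weight $n(p^e-1)/q$ in $(t,z_1,\dots,z_n)$ with all variables of weight one, every scalar entry $I_{1,\ell,i}(z)$, being the coefficient of $t^{\ell p^e-1}$ in $\Phi_1(t,z)/(t-z_i)$, is a homogeneous polynomial in $z$ of degree $D_\ell:=(g-\ell)p^e-g+(p^e-1)/q$. Consequently $M(z)$ is homogeneous of degree $\sum_{\ell=1}^g D_\ell$, and a direct arithmetic check identifies this sum with $d_M$.

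For the nonvanishing modulo $p$, I would exploit the lex-leading terms given by Lemma \ref{lem leco}. Set $m_\ell:=(z_1\cdots z_{q(g-\ell)+1})^{(p^e-1)/q}/z_{q(g-\ell)+1}^\ell$; these monomials satisfy $m_1>m_2>\cdots>m_g$ in the lexicographic order. By Lemma \ref{lem leco}, the lex-leading monomial of the vector-polynomial $I_{1,\ell'}$ is $m_{\ell'}$, and its coefficient vector $C_{\ell'}$ has the first $q(g-\ell')$ entries equal to zero while all remaining entries are nonzero modulo $p$. Therefore the entry $M_{\ell,\ell'}=I_{1,\ell',q(g-\ell)+1}$ of the minor has lex-leading monomial equal to $m_{\ell'}$ with coefficient nonzero modulo $p$ exactly when $q(g-\ell)+1\ge q(g-\ell')+1$, i.e.\ when $\ell\le\ell'$; for $\ell>\ell'$ the coefficient of $m_{\ell'}$ in $M_{\ell,\ell'}$ vanishes, so the leading monomial of $M_{\ell,\ell'}$ is strictly lex-smaller than $m_{\ell'}$ (or $M_{\ell,\ell'}=0$).

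Expanding $\det M=\sum_\sigma\mathrm{sgn}(\sigma)\prod_\ell M_{\ell,\sigma(\ell)}$, the identity permutation contributes a term with lex-leading monomial $\prod_\ell m_\ell$ and coefficient $\pm\prod_{\ell=1}^g\binom{(p^e-1)/q-1}{\ell-1}$. Any other permutation $\sigma$ must satisfy $\sigma(\ell)<\ell$ for some $\ell$ (otherwise $\sum\sigma(\ell)=\sum\ell$ would force $\sigma$ to be the identity), so the corresponding factor $M_{\ell,\sigma(\ell)}$ has leading monomial strictly lex-smaller than $m_{\sigma(\ell)}$, forcing $\prod_\ell M_{\ell,\sigma(\ell)}$ to have leading monomial strictly lex-smaller than $\prod_\ell m_\ell$. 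Thus only the identity permutation contributes to the monomial $\prod_\ell m_\ell$ in $\det M$, and its coefficient is nonzero modulo $p$ by the second assertion of Lemma \ref{lem leco}. This proves $M(z)\not\equiv 0\pmod p$. The only delicate point of the argument is the sharpness of the lex-triangular structure, which is guaranteed by the exact position of the leading zeros in $C_{\ell'}$ supplied by Lemma \ref{lem leco}; no further analysis is needed.
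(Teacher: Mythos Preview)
Your argument is correct and follows essentially the same route as the paper: both identify the lex-leading term of $M(z)$ as $\pm\prod_{\ell=1}^g\binom{(p^e-1)/q-1}{\ell-1}\,m_\ell$ via Lemma~\ref{lem leco} and invoke the nonvanishing of the binomials modulo $p$. Your write-up is more explicit than the paper's in two respects: you justify the triangular structure of the leading coefficients (so that only the identity permutation contributes to $\prod_\ell m_\ell$), and you derive the degree $d_M$ from the homogeneity of $\Phi_1(t,z)$ rather than from the explicit leading monomial; both are straightforward but welcome clarifications.
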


\begin{proof}
Every column of $I_{1,\ell}$ is a homogeneous polynomial. Hence $M(z)$ is a homogeneous polynomial.
By Lemma \ref{lem leco} the leading term of $M(z)$ equals
\bean
\label{lcd}
\pm \prod_{\ell=1}^g \binom{(p^e-1)/q-1}{\ell-1}
(z_1\dots z_{q(g-\ell)+1})^{(p^e-1)/q}/z_{q(g-\ell)+1}^\ell\,.
\eean
This expression is  nonzero modulo $p$ by Lemma \ref{lem leco}. Formula \eqref{lcd} implies \eqref{deg M}.
\end{proof}

\section{Congruences for solutions of \KZ/ equations}
\label{sec 6}
\subsection{Congruences for Hasse--Witt matrices of \KZ/ equations}
Let $r=1$, $n=gq+1$, ${\bf e}=(e,e,\dots)$, where $e$ is defined in Section \ref{sec ass}. Let
\bean
\label{DN}
&
\Ga =\{1,\dots,g\}\subset \Z, \qquad
\Dl=(\Ga,\Ga, \dots ),
\\
\notag
&
N=[0, gp^e + (p^e-1)/q-g]\subset \R.
\eean
The infinite tuple $(N, N,\dots)$ of intervals is $(\Dl,{\bf e})$-admissible\textup, see Definition \textup{\ref{defN}}.

Recall the polynomial
\bea
 \Phi_1(t,z) = \big((t-z_1)\dots(t-z_n)\big)^{(p^e-1)/q}.
\eea
 The Newton polytope of $\Phi_1(t,z)$ with respect to variable $t$ is the interval
 \linebreak
$N=[0, gp + (p-1)/q -g]$. We also have 
\bea
\Phi_s(t,z) = \Phi_1(t,z)\cdot \Phi_1(t,z)^{p^e}\dots \Phi_1(t,z)^{p^{e(s-1)}}\,.
\eea
The infinite tuple $(\Phi_1(t,z), \Phi_1(t,z),\dots)$ is $(\Dl,{\bf e})$-admissible, see Definition \ref{defn}.

\vsk.2>

For $s\geq 1$ consider the Hasse--Witt $g\times g$ matrix 
\bea
A(\Phi_s(t,z)) := A(es,  \Ga, \Ga, \Phi_s(t,z))
=
\big( \Cf_{p^{es}v-u}(\Phi_s(t,z))\big)_{u,v=1,\dots,g}\,,
\eea
see \eqref{Cuv+}. The entries of this matrix are polynomials in $z$.

\vsk.2>

\begin{thm} 
\label{thm F ne}

The determinant $\det A(\Phi_1(t,z))$ is a homogeneous polynomial in $z$ of degree
\bean
\label{deg d}
d_\Phi = \frac{p^e-1}{q}\cdot\frac{qg^2+2g-qg}2\,,
\eean
and the determinant is nonzero modulo $p$.

\end{thm}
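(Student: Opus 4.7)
The plan is to determine the leading monomial of $\det A(\Phi_1(t,z))$ in the lexicographic order with $z_1 > z_2 > \dots > z_n$ and show that its coefficient is $\pm 1$, hence nonzero modulo $p$. Homogeneity is immediate: since $\Phi_1(t,z) = \prod_{i=1}^n(t-z_i)^{e'}$ with $e' = (p^e-1)/q$ is bi-homogeneous of degree $ne'$, the entry $A_{u,v}(z) = \Cf_{p^e v - u}\Phi_1$ is homogeneous in $z$ of degree $ne' - p^e v + u$, and summing these degrees along any permutation $\sigma \in S_g$ yields the common value $d_\Phi$, so $\det A(\Phi_1(t,z))$ is homogeneous of degree $d_\Phi$.

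For the leading monomial of $A_{u,v}$ I would use the identification $A_{u,v} = (-1)^{k}\, e_k$, where $k = ne' - p^e v + u$ and $e_k$ is the elementary symmetric polynomial of degree $k$ evaluated on the multiset in which each $z_i$ appears with multiplicity $e'$. The lex-largest monomial of $e_k$ under the multiplicity cap $e'$ is the greedy filling $z_1^{e'}\cdots z_s^{e'}\,z_{s+1}^r$ with $k = s e' + r$, $0 \le r < e'$, and it appears with coefficient $\binom{e'}{r}$. Using $p^e = q e' + 1$ one finds $s = (g-v)q + 1$, $r = u - v$ when $u \ge v$, and $s = (g-v)q$, $r = e' + u - v$ when $u < v$. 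On the diagonal $u = v$ the leading monomial is $z_1^{e'}\cdots z_{(g-v)q + 1}^{e'}$ with coefficient $\pm 1$.

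The crux is to show that $\sigma = \mathrm{id}$ is the unique lex-maximizer of $\prod_{u=1}^g A_{u,\sigma(u)}$ over $\sigma \in S_g$. I would track the exponent vector of the leading monomial: the exponent of $z_1$ in the leading monomial of any $A_{u,v}$ equals $e'$, with the single exception that for $v = g$ and $u < g$ it drops to $e' + u - g < e'$. Summing gives exponent $\beta_1(\sigma) = g e'$ if and only if $\sigma(g) = g$. Within the set of $\sigma$ fixing $g$, the exponents of $z_2, \dots, z_q$ in the product are independent of $\sigma$, and the first distinguishing coordinate is the exponent of $z_{q+1}$, which attains its maximum $(g-1)e'$ if and only if $\sigma(g-1) = g-1$. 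Iterating with the exponents of $z_{(g-u)q + 1}$ for $u = g, g-1, \dots, 1$ forces $\sigma(u) = u$ at each stage, so the unique lex-maximizer is $\sigma = \mathrm{id}$.

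Consequently, the leading monomial of $\det A(\Phi_1(t,z))$ coincides with that of $\pm\prod_{u=1}^g A_{u,u}$ and has coefficient $\pm\prod_u \binom{e'}{0} = \pm 1$, nonzero modulo $p$. The main obstacle will be the inductive uniqueness argument in the third step: one must verify at each stage that the relevant exponent is constant across all permutations that passed the earlier stages, and that it strictly drops on any deviation from $\mathrm{id}$.
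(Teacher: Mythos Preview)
Your proposal is correct and follows essentially the same approach as the paper: compute the lex-leading term of each entry $A_{u,v}$ (the paper states these in a lemma), and show that the product along the diagonal strictly dominates every other permutation product, so the leading term of $\det A(\Phi_1)$ is $\pm\prod_{v=1}^g(z_1\cdots z_{(g-v)q+1})^{(p^e-1)/q}$ with unit coefficient. The paper simply asserts that the diagonal product gives the leading term (``easy to see''), whereas you supply the inductive argument on the coordinates $z_{(g-u)q+1}$; that argument is correct and is exactly the verification the paper omits.
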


\begin{proof}

Denote $A(\Phi_1(t,z)) = : (A_{u,v}(z))_{u,v=1,\dots,g}$\,.

\begin{lem}
The leading term of $A_{u,v}(z)$ equals 
\bea
&&
\pm\binom{(p^e-1)/q}{v-u} 
(z_1z_2\dots z_{qg+1-qv})^{(p^e-1)/q}/ z_{qg+1-qv}^{v-u}\,, \qquad \text{if}\;\; v\geq u,
\\
\notag
&&
\pm\binom{(p^e-1)/q}{u-v} 
(z_1z_2\dots z_{qg+1-qv})^{(p^e-1)/q} z_{qg+2-qv}^{u-v}\,, \qquad \quad\ \text{if}\;\; v\leq u.
\eea
For example, for $g=2$ the matrix of leading terms is
\bean
\label{ex d}
\begin{pmatrix}
 \pm(z_1\dots z_{g+1})^{(p^e-1)/q} & \pm \binom{(p^e-1)/q}{1}z_1^{(p^e-1)/q}/z_1
 \\
   \pm \binom{(p^e-1)/q}{1}(z_1\dots z_{q+1})^{(p^e-1)/q}z_{q+2} & \pm z_1^{(p^e-1)/q} 
\end{pmatrix} .
\eean
\end{lem}

\begin{proof}
The proof is by inspection.
\end{proof}

The fact that $\det A(\Phi_1(t,z))$ is a homogeneous polynomial easily follows from
the definition of $A(\Phi_1(t,z))$. It is also easy to see that the leading term of the 
determinant of the matrix of leading terms
 of $A_{u,v}(z)$ equals the product of diagonal elements,
\bean
\label{Deg}
\pm \,\prod_{v=1}^{g}( z_1\dots z_{qg+1-qv})^{(p^e-1)/q}.
\eean
This expression is not congruent to zero modulo $p$.
Counting the degree of the monomial in \eqref{Deg} we 
obtain \eqref{deg d}.
This proves Theorem \ref{thm F ne}.
\end{proof}

\begin{cor}
\label{cor F ne}
The infinite nondegenerate $(\Dl, {\bf e})$-admissible 
tuple $(\Phi_1(t,z), \Phi_1(t,z),\dots )$ satisfies the assumptions of Theorem \textup{\ref{thm 1.6}}.
Therefore,
\begin{enumerate}
\item[\textup{(i)}] for $s\geq 1$ we have
\begin{align}
\label{alal+}
A(\Phi_{s}(t,z))
\equiv
A(\Phi_1(t,z))\cdot\si^e(A(\Phi_1(t,z)))\cdots \si^{e(s-1)}(A\big(\Phi_1(t,z)))
\pmod{p}\,;
\end{align}
\item[\textup{(ii)}] for $s\geq 1$ 
the determinant of the matrix $A(\Phi_s(t,z))$
is a polynomial, which is nonzero modulo~$p$, and 
we have modulo $p^s$\,\textup:
\begin{align*}
&
A(\Phi_{s+1}(t,z))\cdot \si^e(A(\Phi_{s}(t,z)))^{-1}
\equiv
A(\Phi_{s}(t,z))\cdot \si^e(A(\Phi_{s-1}(t,z)))^{-1},
\end{align*}
where for $s=1$ we understand the second factor on the right-hand side 
as the  $g\times g$ identity matrix.
\end{enumerate}

\end{cor}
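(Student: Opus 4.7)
The plan is straightforward: verify that the hypotheses of Theorem \ref{thm 1.6} hold for the constant tuple $(\Phi_1(t,z), \Phi_1(t,z),\dots)$, and then read off parts (i) and (ii) as specializations of that theorem, the only real work being to match notation.

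Admissibility has already been noted in the paragraph containing \eqref{DN}: the tuple of intervals $(N, N,\dots)$ satisfies \eqref{def ad} with $\Ga = \{1,\dots,g\}$ and $N = [0, gp^e + (p^e-1)/q - g]$, by a direct one-dimensional arithmetic check using $(p^e-1)/q\in\Z$. Hence $(\Phi_1,\Phi_1,\dots)$ is $(\Dl,{\bf e})$-admissible in the sense of Definition \ref{defn}. For nondegeneracy in the sense of Definition \ref{def F}, since every 4-tuple $(e_{l+1}, \Dl_l, \Dl_{l+1}, \La_l)$ coincides with $(e, \Ga, \Ga, \Phi_1)$, the only non-vanishing required is that of $\det A(e, \Ga, \Ga, \Phi_1(t,z))$ modulo $p$, which is precisely the content of Theorem \ref{thm F ne}. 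Thus the assumptions of both parts (i) and (ii) of Theorem \ref{thm 1.6} are met.

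With the hypotheses of Theorem \ref{thm 1.6} secured, I would next unwind the notation. Specializing $\La_i = \Phi_1$ and $e_i = e$ in the definitions of $W_s$ and $W_s^{(1)}$ gives
\[
W_s(t,z) = \prod_{i=0}^{s} \Phi_1(t,z)^{p^{ei}} = \Phi_{s+1}(t,z), \qquad W_s^{(1)}(t,z) = \prod_{i=1}^{s} \Phi_1(t,z)^{p^{e(i-1)}} = \Phi_s(t,z),
\]
where the index shift in $W_s^{(1)}$ accounts for the loss of one Frobenius power. Substituting into Theorem \ref{thm 1.6}(i), with $s$ replaced by $s-1$, yields the congruence in part (i). Substituting into Theorem \ref{thm 1.6}(ii) yields the mod-$p^s$ congruence in part (ii). The remaining claim, that $\det A(\Phi_s)$ is a polynomial nonzero modulo $p$ for every $s\geq 1$, comes for free from part (i): its reduction modulo $p$ is the product of Frobenius conjugates of $\det A(\Phi_1)$, each nonzero modulo $p$ by Theorem \ref{thm F ne}.

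There is no substantial obstacle here: the heavy lifting has been done by Theorems \ref{thm 1.6} and \ref{thm F ne}. The only point requiring care is the bookkeeping around the Frobenius shift $\sigma^e$: one must verify that $W_s^{(1)}$, the tail of $W_s$ obtained by dropping the leading factor, genuinely equals $\Phi_s$ and not $\Phi_{s+1}$, so that the congruence of Theorem \ref{thm 1.6}(ii) translates exactly as stated in the corollary.
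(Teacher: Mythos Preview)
Your proof is correct and takes the same approach as the paper, which simply states that the corollary follows from Theorems \ref{thm F ne} and \ref{thm 1.6}. You have supplied the notational unwinding (in particular the identifications $W_s = \Phi_{s+1}$ and $W_s^{(1)} = \Phi_s$) that the paper leaves implicit; this is exactly what is needed and there is nothing further to add.
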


\begin{proof}
The corollary follows from Theorems \ref{thm F ne} and \ref{thm 1.6}.
\end{proof}

\vsk.2>

\vsk.2>

\subsection{Congruences for frames of solutions of \KZ/ equations}

\begin{thm}
\label{thm coS}
We have the following congruences of $n\times g$ matrices.

\begin{enumerate}
\item[\textup{(i)}] For $s\geq 1$,
\bea
I_{s+1}(z) \cdot A( \Phi_{s+1}(t,z))^{-1}
\equiv 
I_{s}(z) \cdot A(\Phi_{s}(t,z))^{-1}
\pmod{p^s}\,.
\eea

\item[\textup{(ii)}] For $s\geq 1$ and $j=1,\dots, n$,
\bea
\frac {\der I_{s+1}}{\der z_j} (z) \cdot A(\Phi_{s+1}(t,z))^{-1}
\equiv 
\frac{\der I_{s}}{\der z_j}(z) \cdot A(\Phi_{s}(t,z))^{-1}
\pmod{p^s}\,.
\eea

\end{enumerate}

\end{thm}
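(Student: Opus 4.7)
My plan is to realize each row of the matrix $I_s(z)$ as the first row of a $g\times g$ matrix to which the Dwork--type congruences of Section \ref{sec 2} apply directly. For each $i\in\{1,\dots,n\}$, consider the infinite tuple
\[
\La^{(i)} \;=\; \bigl(\Phi_1(t,z)/(t-z_i),\,\Phi_1(t,z),\,\Phi_1(t,z),\,\dots\bigr)
\]
together with $\Dl=(\Ga,\Ga,\dots)$ and ${\bf e}=(e,e,\dots)$, where $\Ga=\{1,\dots,g\}$. Set $A^{(i)}_s(z):=A(es,\Ga,\Ga,\Phi_s(t,z)/(t-z_i))$. By construction, its $(1,v)$-entry is $\Cf_{p^{es}v-1}(\Phi_s/(t-z_i))=I_{s,v,i}$, so the first row of $A^{(i)}_s$ is exactly the $i$-th row of $I_s(z)$; call it $\mathcal{I}_{s,i}$.

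Since $N(\Phi_1/(t-z_i))\subset N(\Phi_1)$, the admissibility of $(\Phi_1,\Phi_1,\dots)$ established in Section \ref{sec 6} carries over to show $\La^{(i)}$ is $(\Dl,{\bf e})$-admissible. Theorem \ref{thm F ne} already yields the nonvanishing modulo $p$ of $\det A(e,\Ga,\Ga,\Phi_1)$; the remaining non-degeneracy input for Theorem \ref{thm 1.6}(ii), namely $\det A(e,\Ga,\Ga,\Phi_1/(t-z_i))\not\equiv0\pmod p$, can be verified by a lexicographic leading-term computation parallel to the proof of Theorem \ref{thm F ne}. Granting this, Theorem \ref{thm 1.6}(ii) applied to $\La^{(i)}$ (so that $W_s=\Phi_{s+1}/(t-z_i)$ and $W_s^{(1)}=\Phi_s$) yields
\[
A^{(i)}_{s+1}\cdot\si^e\bigl(A(\Phi_s)\bigr)^{-1}\;\equiv\; A^{(i)}_s\cdot\si^e\bigl(A(\Phi_{s-1})\bigr)^{-1}\pmod{p^s}.
\]
Reading off the first row gives $\mathcal{I}_{s+1,i}\,\si^e(A(\Phi_s))^{-1}\equiv\mathcal{I}_{s,i}\,\si^e(A(\Phi_{s-1}))^{-1}\pmod{p^s}$. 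Corollary \ref{cor F ne}(ii), after inverting the invertible-mod-$p$ matrices on both sides, gives $\si^e(A(\Phi_s))\cdot A(\Phi_{s+1})^{-1}\equiv\si^e(A(\Phi_{s-1}))\cdot A(\Phi_s)^{-1}\pmod{p^s}$. Right-multiplying the two congruences causes the $\si^e$ factors to cancel telescopically, leaving $\mathcal{I}_{s+1,i}\cdot A(\Phi_{s+1})^{-1}\equiv\mathcal{I}_{s,i}\cdot A(\Phi_s)^{-1}\pmod{p^s}$ for every $i$; stacking rows yields part (i).

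For part (ii), I would differentiate the congruence of part (i) with respect to $z_j$. Using $D_j(A^{-1})=-A^{-1}(D_jA)A^{-1}$ and abbreviating $X_k:=I_k\,A(\Phi_k)^{-1}$, $Y_k:=D_j A(\Phi_k)\cdot A(\Phi_k)^{-1}$, the derivative of part (i) rearranges to
\[
D_j I_{s+1}\cdot A(\Phi_{s+1})^{-1}-D_j I_s\cdot A(\Phi_s)^{-1}\;\equiv\; X_{s+1}Y_{s+1}-X_sY_s\pmod{p^s}.
\]
Part (i) gives $X_{s+1}\equiv X_s\pmod{p^s}$, and Theorem \ref{thm der} applied to the original tuple $(\Phi_1,\Phi_1,\dots)$ with $\ell=0$ gives $Y_{s+1}\equiv Y_s\pmod{p^s}$. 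Writing $X_{s+1}Y_{s+1}-X_sY_s=X_{s+1}(Y_{s+1}-Y_s)+(X_{s+1}-X_s)Y_s$ shows each summand lies in $p^s\cdot(\cdot)$, completing part (ii). The main anticipated obstacle is the non-degeneracy verification for $A(e,\Ga,\Ga,\Phi_1/(t-z_i))$; the remainder is a mechanical combination of the Section \ref{sec 2} machinery with the observation that $\mathcal{I}_{s,i}$ sits as the first row of the extended Hasse--Witt matrix $A^{(i)}_s$.
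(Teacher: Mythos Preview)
Your argument is correct in outline but takes a different route from the paper, and the one piece you flag as an ``anticipated obstacle'' deserves a closer look.

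The paper's proof is much shorter because it exploits an identity you did not notice: the $i$-th row of $I_s(z)$ is, up to a $p$-adic unit, the $z_i$-derivative of the \emph{first row} of the Hasse--Witt matrix $A(\Phi_s)$ itself. Indeed, since $\partial\Phi_s/\partial z_i = -\tfrac{p^{es}-1}{q}\,\Phi_s/(t-z_i)$, one has $\partial_{z_i} A_{1,\ell}(\Phi_s)=\tfrac{1-p^{es}}{q}\,I_{s,\ell,i}$, and $\tfrac{1-p^{es}}{q}\equiv\tfrac1q\pmod{p^s}$ is a unit. Thus part~(i) is literally the first row of Theorem~\ref{thm der} (with $\ell=0$) applied to the single tuple $(\Phi_1,\Phi_1,\dots)$, and part~(ii) is the first row of Theorem~\ref{thm der2}. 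No auxiliary tuples, no extra non-degeneracy check, no telescoping with Corollary~\ref{cor F ne}.

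Your approach, by contrast, builds $n$ separate tuples $\La^{(i)}$ and invokes Theorem~\ref{thm 1.6}(ii) for each. This works, but the non-degeneracy $\det A(e,\Ga,\Ga,\Phi_1/(t-z_i))\not\equiv0\pmod p$ that you ``grant'' is not as routine as the computation in Theorem~\ref{thm F ne}: the factor $(t-z_i)^{(p^e-1)/q-1}$ breaks the symmetry that drives the leading-term analysis there. The good news is that you do not actually need it. If you trace through the proof of Theorem~\ref{thm 1.6}(ii), every matrix that gets inverted is built from $\La_1,\La_2,\dots$ only; the hypothesis on $\La_0$ is stated for uniformity but is never used. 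So you may simply drop that verification and your argument goes through. Still, the paper's one-line gradient observation gives a cleaner and more conceptual proof.
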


\vsk.2>

\begin{proof}
Consider the first row of the Hasse--Witt matrix $A(\Phi_s(t,z))$,
\bea
\big(A_{1,1}(\Phi_s(t,z)),\dots, A_{1,g}(\Phi_s(t,z))\big),
\quad
A_{1,\ell}(\Phi_s(t,z)) = \on{Coeff}_{\ell p^s-1}(\Phi_{s}(t,z)).
\eea
For each  $A_{1,\ell}(\Phi_s(t,z))$ we view the gradient
\bea
\nabla A_{1,\ell}(\Phi_s(t,z))=\Big(\frac{\der A_{1,\ell}(s)}{\der z_1}, \dots,
\frac{\der A_{1,\ell}(s)}{\der z_n} \Big)
\eea
as a column $n$-vector. The resulting $n\times g$ matrix of gradients
\bea
\nabla A(s,z):=(\nabla A_{1,1}(\Phi_s(t,z)),\dots,\nabla A_{1,g}(\Phi_s(t,z)))
\eea
is proportion to the matrix $I_s(z)$, $\nabla A(s,z) = \frac{1-p^{es}}q I_s(z)$.
By Theorems \ref{thm der} and \ref{thm der2} we have
modulo $p^s$,
\bea
&
\nabla A(s+1,z)
\cdot A(\Phi_{s+1}(t,z))^{-1}
\equiv 
\nabla A(s,z) \cdot A(\Phi_{s}(t,z))^{-1},
\\
&
\frac{\der}{\der z_j}\big(\nabla A(s+1,z)\big)
\cdot A(\Phi_{s+1}(t,z))^{-1}
\equiv 
\frac{\der}{\der z_j}\big(\nabla A(s,z)\big) \cdot A(\Phi_{s}(t,z))^{-1}.
\eea
These congruences imply the theorem.
\end{proof}

\vsk.2>

\begin{cor}
\label{thm KZ mod p}
For $s\geq 1$ we have
\bea
I_{s}(z) \cdot A(\Phi_{s}(t,z))^{-1} \equiv 
I_{1}(z) \cdot A(\Phi_{1}(t,z))^{-1} \pmod{p}.
\eea

\end{cor}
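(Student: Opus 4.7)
The plan is to derive the corollary as a direct consequence of part (i) of Theorem \ref{thm coS} by reducing the congruence modulo $p$ and then telescoping.

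First, I would observe that Theorem \ref{thm coS}(i) asserts the strong congruence
\[
I_{s+1}(z) \cdot A(\Phi_{s+1}(t,z))^{-1} \equiv I_{s}(z) \cdot A(\Phi_{s}(t,z))^{-1} \pmod{p^s}
\]
for every $s \geq 1$. Since $p^s \mid p$ for $s \geq 1$, this congruence certainly holds modulo $p$:
\[
I_{s+1}(z) \cdot A(\Phi_{s+1}(t,z))^{-1} \equiv I_{s}(z) \cdot A(\Phi_{s}(t,z))^{-1} \pmod{p}.
\]
Here I am using implicitly that the inverse matrices are well defined modulo~$p$, which is justified by Theorem \ref{thm F ne} together with Corollary \ref{cor F ne}, asserting that $\det A(\Phi_s(t,z))$ is nonzero modulo~$p$ so that Cramer's rule produces a well-defined inverse with entries in $\mathbb{Z}_p$-valued rational functions of $z$ with denominator coprime to $p$.

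Next I would telescope: applying the displayed mod-$p$ congruence successively for indices $s-1, s-2, \dots, 1$ yields the chain
\[
I_{s}(z) A(\Phi_{s})^{-1} \equiv I_{s-1}(z) A(\Phi_{s-1})^{-1} \equiv \dots \equiv I_{1}(z) A(\Phi_{1})^{-1} \pmod{p},
\]
which is exactly the claimed congruence.

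There is really no obstacle here beyond ensuring the invertibility of $A(\Phi_s(t,z))$ modulo $p$, which is precisely what Theorem \ref{thm F ne} and Corollary \ref{cor F ne} guarantee. Thus the corollary is immediate from Theorem \ref{thm coS}(i) by iterating the weaker mod-$p$ version of that congruence.
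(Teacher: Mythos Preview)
Your argument is correct and matches the paper's intended derivation: the corollary is stated immediately after Theorem~\ref{thm coS} with no separate proof, and the telescoping of part~(i) modulo~$p$ is exactly the implied reasoning. One small slip: you write ``$p^s \mid p$ for $s \geq 1$'' where you mean $p \mid p^s$; the divisibility goes the other way.
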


\vsk.2>

\subsection{Domain of convergence}

By Theorem \ref{thm F ne} the polynomial $\det A(\Phi_1(t,z)) \in \Z[z]$ is  of degree $d_\Phi$ and
this polynomial is nonzero modulo $p$. For a positive integer $m$ define 
\bea
\frak D^{(m)}_{\on{KZ}}
= \{a \in (\Z_p^{(m)})^{n} \mid\  |\det A(\Phi_1(t,a))|_p=1\}\,.
\eea
By Lemma \ref{lem nonempty} the domain
$\frak D^{(m)}_{\on{KZ}}$ is nonempty if $p^m> d_\Phi$.
In what follows we assume that $p^m>d_\Phi$.

We have
$\vert\det A(\Phi_{s}(t,a))\big\vert_p =1$ for $ a\in \frak D^{(m)}_{\on{KZ}}$.
All entries of $A(\Phi_{s}(t,z))^{-1}$ are
rational functions in $z$ regular on $\frak D^{(m)}_{\on{KZ}}$.  
For every $a\in\frak D^{(m)}_{\on{KZ}}$  all entries of $A(\Phi_{s}(t,a))$ and $A(\Phi_{s}(t,a))^{-1}$ 
are elements of $\Z_p^{(m)}$.

\begin{thm}
\label{thm coKZ}
The sequence of $g\times g$ matrices 
\bea
\big(A\big(\Phi_{s}(t,z)\big)\cdot \si^e\big(A\big(\Phi_{s-1}(t,z)\big)\big)^{-1}\big)_{s\geq 1}\,,
\eea
whose entries are rational functions in $z$ regular on  $\frak D^{(m)}_{\on{KZ}}$,
 uniformly converges on $\frak D^{(m)}_{\on{KZ}}$ as $s\to\infty$ to an analytic
$g\times g$ matrix which will be denoted by $\mc A(z)$.
For $a\in\frak D^{(m)}_{\on{KZ}}$ we have
\bea
\big\vert \det \mc A(a) \big\vert_p=1\,
\eea
and the matrix $\mc A(a)$ is invertible. 
\end{thm}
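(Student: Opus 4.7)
My plan is to deduce Theorem \ref{thm coKZ} as a direct specialization of the general convergence Theorem \ref{thm conv} to the constant tuple $\La=(\Phi_1(t,z),\Phi_1(t,z),\dots)$ with exponents ${\bf e}=(e,e,\dots)$ and labels $\Dl=(\Ga,\Ga,\dots)$ from \eqref{DN}. The main tasks are therefore (a) to verify that this infinite tuple satisfies the hypotheses of Theorem \ref{thm conv}, and (b) to match the objects appearing in that general statement with the matrices $A(\Phi_s(t,z))$ and the domain $\frak D^{(m)}_{\on{KZ}}$.

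First I would record that the tuple $(\Phi_1,\Phi_1,\dots)$ is $(\Dl,{\bf e})$-admissible (this is the content of the admissibility observation made right after \eqref{DN}, since each Newton polytope equals the interval $N=[0,gp^e+(p^e-1)/q-g]$), and that it is nondegenerate in the sense of Definition \ref{def F}: the finite collection $\mc T$ of 4-tuples in \eqref{tuples} here reduces to the single 4-tuple $(e,\Ga,\Ga,\Phi_1)$, and Theorem \ref{thm F ne} says precisely that $\det A(e,\Ga,\Ga,\Phi_1(t,z))=\det A(\Phi_1(t,z))$ is a nonzero polynomial modulo $p$. Consequently the general convergence domain $\frak D^{(m)}$ associated to this tuple coincides with $\frak D^{(m)}_{\on{KZ}}=\{a\in(\Z_p^{(m)})^n:|\det A(\Phi_1(t,a))|_p=1\}$.

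Next I would identify the $W_s$ and $W_s^{(1)}$ appearing in \eqref{sec of m} with the KZ master polynomials. Since all $\La_i=\Phi_1$ and all $e_i=e$, the definitions give
\begin{align*}
W_s(x) &= \Phi_1(x)^{1+p^e+p^{2e}+\dots+p^{es}} \;=\; \Phi_{s+1}(x),\\
W_s^{(1)}(x) &= \Phi_1(x)^{1+p^e+\dots+p^{e(s-1)}} \;=\; \Phi_s(x),
\end{align*}
using the factorization $\Phi_s(t,z)=\Phi_1(t,z)\cdot\Phi_1(t,z)^{p^e}\cdots\Phi_1(t,z)^{p^{e(s-1)}}$ recorded earlier. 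With $\sum_{a=1}^{s+1}e_a=e(s+1)$ and $\sum_{a=2}^{s+1}e_a=es$, the $s$-th term of the sequence \eqref{sec of m} of Theorem \ref{thm conv} is then precisely
\[
A(\Phi_{s+1}(t,z))\cdot\si^{e}\!\left(A(\Phi_s(t,z))\right)^{-1},
\]
which is exactly the $(s+1)$-th term of the sequence in Theorem \ref{thm coKZ}.

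With these identifications made, every conclusion in Theorem \ref{thm coKZ} follows verbatim from Theorem \ref{thm conv}: the entries of each matrix in the sequence are rational functions in $z$ regular on $\frak D^{(m)}_{\on{KZ}}$ by Lemma \ref{lem |det|} (and the fact that $|\det\si^e(A(\Phi_s(t,a)))|_p=1$ on the domain, coming from part (i) of Theorem \ref{thm 1.6}); uniform convergence on $\frak D^{(m)}_{\on{KZ}}$ to an analytic $g\times g$ matrix $\mc A(z)$ with values in $\Z_p^{(m)}$ is the content of Theorem \ref{thm conv}; and the equality $|\det\mc A(a)|_p=1$ for $a\in\frak D^{(m)}_{\on{KZ}}$, which yields invertibility, is \eqref{det 1}. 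There is no real obstacle here since all the analytic work—the Dwork-type congruences of Theorem \ref{thm 1.6}, the nondegeneracy via Theorem \ref{thm F ne}, and the general convergence statement of Theorem \ref{thm conv}—has already been established; the only step requiring care is checking that this single-element $\mc T$ satisfies Definition \ref{def F}, which is where Theorem \ref{thm F ne} is genuinely used.
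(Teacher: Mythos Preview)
Your proposal is correct and follows exactly the same approach as the paper, whose entire proof is the single sentence ``The theorem follows from Theorem \ref{thm conv}.'' You have simply unpacked what that sentence means: verifying admissibility, nondegeneracy via Theorem \ref{thm F ne}, identifying $W_s=\Phi_{s+1}$ and $W_s^{(1)}=\Phi_s$, and matching the domain $\frak D^{(m)}$ with $\frak D^{(m)}_{\on{KZ}}$.
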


\begin{proof}
The theorem follows from Theorem \ref{thm conv}.
\end{proof}

\begin{thm}
\label{thm coKZ}
For $i=1,\dots,n$ the sequence of $g\times g$ matrices 
\bea
\Big(\Big(\frac{\der}{\der z_i}A\big(\Phi_{s}(t,z)\big)\Big)
\cdot A\big(\Phi_{s}(t,z)\big)^{-1}\Big)_{s\geq 1}\,,
\eea
whose entries are rational functions in $z$ regular on  $\frak D^{(m)}_{\on{KZ}}$,
uniformly converges on $\frak D^{(m)}_{\on{KZ}}$ as $s\to\infty$ to an analytic
$g\times g$ matrix, which will be denoted by $\mc A^{(i)}(z)$.

The sequence of $n\times g$ matrices 
\bea
\big(I_s(z)\cdot A\big(\Phi_{s}(t,z)\big)^{-1}\big)_{s\geq 1}\,,
\eea
whose entries are rational functions in $z$ regular on  $\frak D^{(m)}_{\on{KZ}}$,
uniformly converges on $\frak D^{(m)}_{\on{KZ}}$ as $s\to\infty$ to an analytic
$n\times g$ matrix which will be denoted by $\mc I(z)$.

For $i=1,\dots, n$ the sequence of $n\times g$ matrices 
\bea
\Big(\frac{\der I_s}{\der z_i}(z)\cdot A\big(s,  \Phi_{s}(t,z)\big)^{-1}\Big)_{s\geq 1}\,,
\eea
whose entries are rational functions in $z$ regular on  $\frak D^{(m)}_{\on{KZ}}$,
uniformly converges on $\frak D^{(m)}_{\on{KZ}}$ as $s\to\infty$ to an analytic
$n\times g$ matrix which will be denoted by $\mc I^{(i)}(z)$.

We have
\bea
\frac{\der \mc I}{\der z_i} = \mc I^{(i)} - \mc I \cdot \mc A^{(i)}\,.
\eea

\end{thm}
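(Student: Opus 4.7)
All four assertions rest on results already established in the paper. For $\mc A^{(i)}(z)$, the claim is an immediate specialization of Theorem \ref{thm conv2}: with the admissible tuple $(\Phi_1,\Phi_1,\dots)$ (nondegenerate by Theorem \ref{thm F ne} via Corollary \ref{cor F ne}) and $\bs e=(e,e,\dots)$, a direct computation gives $W_{s-1}=\Phi_s$, so applying Theorem \ref{thm conv2} with $D=\partial/\partial z_i$ and $\ell=0$ yields, after an index shift, uniform convergence of $\bigl(\partial_iA(\Phi_s)\bigr)A(\Phi_s)^{-1}$ on $\frak D^{(m)}_{\on{KZ}}$. For $\mc I(z)$, the convergence follows from Theorem \ref{thm coS}(i): interpreting the congruence $I_{s+1}A(\Phi_{s+1})^{-1}\equiv I_sA(\Phi_s)^{-1}\pmod{p^s}$ via the denominator-clearing convention of equation \eqref{ff=gg}, and using $|\det A(\Phi_s)(a)|_p=1$ for $a\in\frak D^{(m)}_{\on{KZ}}$, one sees that the matrix values $I_s(a)A(\Phi_s(t,a))^{-1}$ form a $p$-adically Cauchy sequence uniformly in $a$; the same argument applied to Theorem \ref{thm coS}(ii) yields $\mc I^{(i)}(z)$.

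For the derivative identity, I would differentiate the finite-$s$ product $\mc I_s:=I_sA(\Phi_s)^{-1}$ using $\partial_iA(\Phi_s)^{-1}=-A(\Phi_s)^{-1}\bigl(\partial_iA(\Phi_s)\bigr)A(\Phi_s)^{-1}$ to obtain
\begin{equation*}
\partial_i\mc I_s=\bigl(\partial_iI_s\bigr)A(\Phi_s)^{-1}-\mc I_s\cdot\bigl(\partial_iA(\Phi_s)\bigr)A(\Phi_s)^{-1}.
\end{equation*}
As $s\to\infty$, the first term tends uniformly to $\mc I^{(i)}$ and the second to $\mc I\,\mc A^{(i)}$ on $\frak D^{(m)}_{\on{KZ}}$, so the right-hand side converges uniformly to $\mc I^{(i)}-\mc I\,\mc A^{(i)}$.

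The main obstacle is justifying the interchange $\lim_s\partial_i\mc I_s=\partial_i\mc I$. The natural approach is to work polydisc by polydisc: on each open unit polydisc contained in $\frak D^{(m)}_{\on{KZ}}$, each $\mc I_s$ admits a Taylor expansion about the center, uniform convergence on the polydisc forces termwise convergence of the Taylor coefficients, and since $|n|_p\leq 1$ the derivative acts without increasing $p$-adic norms on coefficients; hence the derivatives also converge uniformly. This is the same mechanism used in Lemma \ref{lem conv3}, so its proof adapts verbatim here.
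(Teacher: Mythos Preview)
Your proposal is correct and tracks the paper's own proof closely.  The paper's argument is the one-line citation of Theorems~\ref{thm conv2}, \ref{thm conv3}, and Lemma~\ref{lem conv3}; unpacked, it uses the gradient relation $I_s=\frac{q}{1-p^{es}}\nabla A(s,z)$ (from the proof of Theorem~\ref{thm coS}) to identify the $i$-th row of $I_sA(\Phi_s)^{-1}$ with $\tfrac{q}{1-p^{es}}$ times the first row of $(\partial_iA(\Phi_s))A(\Phi_s)^{-1}$, so that convergence of $\mc I$, $\mc I^{(i)}$ and the derivative identity are read off row by row from the $g\times g$ statements of Section~\ref{sec 3}.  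Your packaging is slightly different: for the convergence of $\mc I$ and $\mc I^{(i)}$ you quote Theorem~\ref{thm coS} directly (which was itself proved via that gradient relation), and for the identity $\partial_i\mc I=\mc I^{(i)}-\mc I\,\mc A^{(i)}$ you differentiate $I_sA(\Phi_s)^{-1}$ and re-run the interchange-of-limit-and-derivative argument at the $n\times g$ level, rather than reducing to the $g\times g$ Lemma~\ref{lem conv3}.  Both routes are valid; yours is a bit more self-contained for the last step, while the paper's makes explicit that nothing is needed beyond the abstract convergence framework of Section~\ref{sec 3}.
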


\begin{proof}
The theorem follows from Theorems \ref{thm conv2},  \ref{thm conv3}, and Lemma \ref{lem conv3}.
\end{proof}

\begin{thm}
\label{thm KZ mc}

We have the following system of equations on $\frak D^{(m)}_{\on{KZ}}$\,:
\bea
\mc I^{(i)} = H_i \cdot \mc I, \qquad i=1,\dots, n,
\eea
where $H_i$ are the Gaudin Hamiltonians defined in \eqref{GH}.

\end{thm}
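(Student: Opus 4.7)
The plan is to take the $p$-adic limit of the congruence form of the KZ equations satisfied by the polynomial approximants $I_s(z)$. By Theorem \ref{thm 7.3}, for each $i=1,\dots,n$ and each $s\geq 1$,
\[
\frac{\der I_s}{\der z_i}(z) \ \equiv\ H_i(z)\cdot I_s(z) \pmod{p^{es}},
\]
as an identity of $n\times g$ matrices of polynomials in $z$, where $H_i(z)$ is the Gaudin operator defined in \eqref{GH}. This is just the spelled-out form of $\nabla_i^{\on{KZ}} I_s \equiv 0\pmod{p^{es}}$ from Section \ref{sec 5}.

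First, I would right-multiply this congruence by the inverse Hasse--Witt matrix $A(\Phi_s(t,z))^{-1}$. On the domain $\frak D^{(m)}_{\on{KZ}}$ the determinant $\det A(\Phi_s(t,z))$ is a $p$-adic unit (by Corollary \ref{cor F ne}(ii) together with the definition of $\frak D^{(m)}_{\on{KZ}}$ and the fact that $|G(z^{p^k})|_p=|G(z)|_p$), hence all entries of $A(\Phi_s(t,z))^{-1}$ evaluated on the domain lie in $\Z_p^{(m)}$. Multiplication by a matrix with entries in $\Z_p^{(m)}$ preserves divisibility by $p^{es}$, so on $\frak D^{(m)}_{\on{KZ}}$ one obtains
\[
\frac{\der I_s}{\der z_i}(z)\cdot A(\Phi_s(t,z))^{-1} \ \equiv\ H_i(z)\cdot \bigl(I_s(z)\cdot A(\Phi_s(t,z))^{-1}\bigr) \pmod{p^{es}}.
\]

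Second, I would pass to the uniform $p$-adic limit as $s\to\infty$. By Theorem \ref{thm coKZ} the left-hand side converges uniformly on $\frak D^{(m)}_{\on{KZ}}$ to $\mc I^{(i)}(z)$, and $I_s(z)\cdot A(\Phi_s(t,z))^{-1}$ converges uniformly to $\mc I(z)$. Since $H_i(z)$ is independent of $s$ and $p^{es}\to 0$ in $\Z_p^{(m)}$, the identity $\mc I^{(i)}(z) = H_i(z)\cdot \mc I(z)$ follows on $\frak D^{(m)}_{\on{KZ}}$.

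The main point to verify carefully is that $H_i(z)$ is actually regular on $\frak D^{(m)}_{\on{KZ}}$, i.e.\ that $|z_i-z_j|_p=1$ whenever $i\neq j$ at each point of the domain. One expects this because the Hasse--Witt determinant $\det A(\Phi_1(t,z))$ carries factors from the diagonals $\{z_i=z_j\}$ (the hyperelliptic-type curve $y^q=(t-z_1)\cdots(t-z_n)$ degenerates there, forcing the associated matrix to drop rank modulo $p$), so the unit condition defining the domain excludes such collisions. Once this regularity is granted, the argument above is a direct limit of the preceding uniform-convergence statements and presents no further difficulty.
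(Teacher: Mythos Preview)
Your argument is exactly the one the paper has in mind: the paper's entire proof is the single line ``The theorem is a corollary of Theorem~\ref{thm 7.3},'' and your first two paragraphs spell out precisely how that corollary is obtained by right-multiplying the mod $p^{es}$ \KZ/ relation by $A(\Phi_s)^{-1}$ and passing to the uniform limit via Theorem~\ref{thm coKZ}.

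Your last paragraph, however, overreaches. The paper does \emph{not} claim that the condition $|\det A(\Phi_1(t,a))|_p=1$ forces $|a_i-a_j|_p=1$; on the contrary, immediately after this theorem it introduces the strictly smaller domain $\frak D^{(m),o}_{\on{KZ}}$ by \emph{adding} the condition $a_i\ne a_j$, which would be redundant if your heuristic were known. So you should not try to prove that $H_i$ is regular on all of $\frak D^{(m)}_{\on{KZ}}$. The correct reading is that the identity $\mc I^{(i)}=H_i\cdot\mc I$ holds wherever both sides make sense (equivalently, after clearing the denominators $\prod_{j\ne i}(z_i-z_j)$, which are polynomial and cause no trouble in the limit); the subsequent applications in the paper all take place on $\frak D^{(m),o}_{\on{KZ}}$. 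Drop the geometric heuristic about the Hasse--Witt matrix dropping rank on collisions and your write-up is complete.
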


\begin{proof}

The theorem is a corollary of Theorem \ref{thm 7.3}.
\end{proof}

\begin{cor}
\label{cor mc I 1}

For $a\in \frak D^{(m)}_{\on{KZ}}$ we have
\bea
\mc I(a) \equiv I_1(a)\cdot A\big(\Phi_{1}(t,a)\big)^{-1}
\pmod{p}.
\eea

\end{cor}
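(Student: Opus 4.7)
The plan is to combine two ingredients already available: the explicit approximating sequence whose limit defines $\mc I(z)$, together with the fact that every term of that sequence is congruent to the first term modulo $p$.

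First I would invoke Corollary \ref{thm KZ mod p}, which asserts that
\[
I_{s}(z) \cdot A(\Phi_{s}(t,z))^{-1} \equiv I_{1}(z) \cdot A(\Phi_{1}(t,z))^{-1} \pmod{p}
\]
for all $s \geq 1$. (If one wishes, this follows by a telescoping argument from part (i) of Theorem \ref{thm coS}, since the chain of congruences modulo $p^{s-1},p^{s-2},\dots,p$ all specialize to congruences modulo $p$.) Evaluating at any point $a \in \frak D^{(m)}_{\on{KZ}}$ is legitimate because, as noted after the definition of $\frak D^{(m)}_{\on{KZ}}$, all entries of $A(\Phi_s(t,a))^{-1}$ lie in $\Z_p^{(m)}$ for such $a$, so both sides represent elements of $\Z_p^{(m)}$.

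Next I would appeal to Theorem \ref{thm coKZ} (the second one, giving convergence of the $n\times g$ matrix sequence), which states that
\[
I_s(z) \cdot A(\Phi_{s}(t,z))^{-1} \;\longrightarrow\; \mc I(z)
\]
uniformly on $\frak D^{(m)}_{\on{KZ}}$ as $s\to\infty$, with values in $\Z_p^{(m)}$. Thus for each fixed $a \in \frak D^{(m)}_{\on{KZ}}$ the sequence of matrices $I_s(a)\cdot A(\Phi_s(t,a))^{-1}$ converges to $\mc I(a)$ in the $p$-adic topology on $\Z_p^{(m)}$.

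Finally, passing to the limit preserves the relation modulo $p$: since each term of the convergent sequence is congruent to $I_1(a)\cdot A(\Phi_1(t,a))^{-1}$ modulo $p$, and since the $p$-adic norm is continuous with $\{x : |x|_p \leq 1/p\}$ closed in $\Z_p^{(m)}$, the limit $\mc I(a)$ also satisfies this congruence. There is no real obstacle here; the only point requiring any care is ensuring that the evaluations at $a$ and the inversions of $A(\Phi_s(t,a))$ are well defined, which is guaranteed by the choice $a \in \frak D^{(m)}_{\on{KZ}}$ and Theorem \ref{thm F ne}.
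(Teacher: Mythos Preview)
Your proof is correct and follows essentially the same approach as the paper: the paper's proof simply cites Corollary \ref{thm KZ mod p} and Theorem \ref{thm coKZ}, which are exactly the two ingredients you combine, and your additional remarks about evaluating at $a\in\frak D^{(m)}_{\on{KZ}}$ and passing to the $p$-adic limit make explicit what the paper leaves implicit.
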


\begin{proof} 

The corollary follows from Corollary \ref{thm KZ mod p}
 and Theorem \ref{thm coKZ}.
\end{proof}

\subsection{Vector bundle $\mc L  \,\to\,  \frak D^{(m),o}_{\on{KZ}}$} 

Denote 
\bea
W=\{(I_1,\dots,I_n)\in (\Q_p^{(m)})^n\ |\ I_1+\dots+I_n=0\}.
\eea
  We consider vectors
$(I_1,\dots,I_n)$ as column vectors. 
The differential operators $\nabla^{\on{KZ}}_i$, $i=1,\dots,n$, define a connection on
the trivial bundle $W\times \frak D^{(m)}_{\on{KZ}} \to \frak D^{(m)}_{\on{KZ}}$,
called the \KZ/ connection. The connection has singularities at the diagonal hyperplanes in $(\Z_p^{(m)})^n$
 and is well-defined over 

\bea
\frak D^{(m),o}_{\on{KZ}} = \{ a=(a_1,\dots,a_n)\in(\Z_p^{(m)})^n 
\mid |\det A(\Phi_1(t,a))|_p=1, \, a_i\ne a_j\,\ \forall i,j\}. 
\eea

\vsk.2>
\noindent
The \KZ/ connection is flat, 
\bea
\big[\nabla^{\on{KZ}}_i, \nabla^{\on{KZ}}_j\big]=0 \qquad \forall\,i,j,
\eea
see \cite{EFK}.
The flat sections of the \KZ/ connection are solutions of  system  \eqref{KZ} of \KZ/ equations.
 
\vsk.2>
For any $a\in \frak D^{(m)}_{\on{KZ}}$ let $\mc L_a \subset W$ be the vector subspace generated by columns of the 
$n\times g$ matrix $\mc I(a)$. Then
\bea
\mc L := \bigcup\nolimits_{a\in  \frak D^{(m)}_{\on{KZ}}}\,\mc L_a \,\to\,  \frak D^{(m)}_{\on{KZ}}
\eea
is an analytic distribution of vector subspaces  in the fibers of the trivial bundle
$W\times \frak D^{(m)}_{\on{KZ}} \to \frak D^{(m)}_{\on{KZ}}$.

\begin{thm}
[{\cite[Theorem 6.7]{VZ2}}]

\label{thm inv} 

The distribution $\mc L  \,\to\,  \frak D^{(m)}_{\on{KZ}}$   is invariant with respect to the \KZ/ connection.
In other words, if $s(z)$ is a local section of $\mc L  \,\to\,  \frak D^{(m)}_{\on{KZ}}$, then the
sections $\nabla_i^{\on{KZ}} s(z)$, $i=1,\dots,n$, also are sections of $\mc L  \,\to\,  \frak D^{(m)}_{\on{KZ}}$.

\end{thm}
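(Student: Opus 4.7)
The plan is to show directly that $\nabla_i^{\mathrm{KZ}}\mathcal I$, viewed as an $n\times g$ matrix, has every column lying in $\mathcal L$; once this is established, invariance for arbitrary local sections follows by Leibniz.

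First I would combine the two main outputs of the preceding section. By Theorem \ref{thm KZ mc},
\[
\mathcal I^{(i)}(z)=H_i(z)\,\mathcal I(z)\qquad\text{on }\mathfrak D^{(m),o}_{\mathrm{KZ}},
\]
while the last identity of Theorem \ref{thm coKZ} gives
\[
\frac{\partial\mathcal I}{\partial z_i}=\mathcal I^{(i)}-\mathcal I\cdot\mathcal A^{(i)}.
\]
Substituting the first into the second yields the key identity
\[
\nabla_i^{\mathrm{KZ}}\mathcal I
=\frac{\partial\mathcal I}{\partial z_i}-H_i\,\mathcal I
=-\,\mathcal I\cdot\mathcal A^{(i)}.
\]
The right-hand side is an $n\times g$ matrix whose columns are explicit $\mathbb Z_p^{(m)}$-linear combinations (with coefficient matrix $-\mathcal A^{(i)}$) of the columns of $\mathcal I$; therefore each column of $\nabla_i^{\mathrm{KZ}}\mathcal I(a)$ lies in $\mathcal L_a$ for every $a\in\mathfrak D^{(m),o}_{\mathrm{KZ}}$.

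Next I would extend this from the columns of $\mathcal I$ to arbitrary local sections of $\mathcal L$. Any such section $s(z)$ has, locally, a representation $s(z)=\mathcal I(z)\,c(z)$ for some analytic $g$-column vector $c(z)$ with values in $\mathbb Q_p^{(m)}$. Then
\[
\nabla_i^{\mathrm{KZ}} s
=\bigl(\nabla_i^{\mathrm{KZ}}\mathcal I\bigr)c+\mathcal I\,\frac{\partial c}{\partial z_i}
=-\,\mathcal I\,\mathcal A^{(i)}\,c+\mathcal I\,\frac{\partial c}{\partial z_i}
=\mathcal I\Bigl(-\mathcal A^{(i)}c+\tfrac{\partial c}{\partial z_i}\Bigr),
\]
which is manifestly a section of $\mathcal L$, completing the proof.

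The conceptual work in this argument is entirely upstream: the nontrivial input is the identity $\mathcal I^{(i)}=H_i\mathcal I$, which is the $p$-adic shadow of the fact that the polynomial vectors $I_s(z)$ solve the KZ equations modulo $p^{es}$ (Theorem \ref{thm 7.3}), combined with the Dwork-type congruence of Theorem \ref{thm coS} ensuring uniform $p$-adic convergence of $\partial_{z_i}I_s\cdot A(\Phi_s)^{-1}$. The only potential obstacle is a bookkeeping one: verifying that the local parametrization $s=\mathcal I c$ is legitimate, which reduces to checking that $\mathcal I(z)$ has rank $g$ on a dense (in the $p$-adic sense) open subset of $\mathfrak D^{(m),o}_{\mathrm{KZ}}$. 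By Corollary \ref{cor mc I 1} and Lemma \ref{lem minor}, the matrix $\mathcal I(a)$ reduces modulo $p$ to $I_1(a)\cdot A(\Phi_1(t,a))^{-1}$, whose $g\times g$ minor $M(a)$ is a nonzero polynomial mod $p$ of controlled degree, so on a nonempty $p$-adic open subset $c(z)$ is uniquely determined; this suffices to conclude, and in fact the stronger statement about rank appears separately as Theorem \ref{thm rk g}.
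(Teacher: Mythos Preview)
Your proof is correct and follows essentially the same approach as the paper's: both combine the identity $\mathcal I^{(i)}=H_i\,\mathcal I$ from Theorem~\ref{thm KZ mc} with the relation $\partial_{z_i}\mathcal I=\mathcal I^{(i)}-\mathcal I\,\mathcal A^{(i)}$ from Theorem~\ref{thm coKZ} to obtain $\nabla_i^{\mathrm{KZ}}(\mathcal I\,c)=\mathcal I\bigl(-\mathcal A^{(i)}c+\partial_{z_i}c\bigr)$. The paper performs this computation directly on a general section $s=\mathcal I\,c$, whereas you first isolate the identity $\nabla_i^{\mathrm{KZ}}\mathcal I=-\mathcal I\,\mathcal A^{(i)}$ and then apply Leibniz, but this is only a difference in presentation; your closing remarks on the legitimacy of the parametrization $s=\mathcal I\,c$ are a useful addition that the paper leaves implicit.
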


\begin{proof} 
Let $\mc I(z)= (\mc I_1(z), \dots, \mc I_g(z))$ be columns of the $n\times g$ matrix 
$\mc I(z)$.
Let $a\in \frak D^{(m)}_{\on{KZ}}$. Let $c(z) = (c_1(z),\dots,c_g(z))$ be a column vector of analytic functions at $a$.
Consider a local section of the distribution  $\mc L  \,\to\,  \frak D^{(m)}_{\on{KZ}}$,
$
s(z)\, =\,  \sum_{j=1}^g \,c_j(z) \mc I_j(z)\,  = : \, \mc I \cdot c$.
Then
\bea
\nabla^{\on{KZ}}_i s(z) 
&=&
 - H_i\cdot \mc I\cdot c +\frac{\der \mc I}{\der z_i} \cdot c 
+ \mc I \cdot \frac{\der c}{\der z_i}
\\
&=&
 - H_i\cdot \mc I\cdot c +( \mc I^{(i)} - \mc I \cdot \mc A^{(i)})\cdot c 
+ \mc I \cdot \frac{\der c}{\der z_i}
\\
&=&
 - H_i\cdot \mc I\cdot c +( H_i \cdot \mc I  - \mc I \cdot \mc A^{(i)})\cdot c 
+ \mc I \cdot \frac{\der c}{\der z_i}
\\
&=&
  - \mc I \cdot \mc A^{(i)} \cdot c 
+ \mc I \cdot \frac{\der c}{\der z_i} \,.
\eea
Clearly, the last expression is a local section of $\mc L  \,\to\,  \frak D^{(m)}_{\on{KZ}}$.
\end{proof}

\begin{thm}

\label{thm rk}

The function $a \mapsto \dim_{\Q_p^{(m)}} \mc L_a$ is  constant on
$\frak D^{(m), o}_{\on{KZ}}$, in other words,  
$\mc L  \,\to\,  \frak D^{(m)}_{\on{KZ}}$  is a vector bundle over $\frak D^{(m),o}_{\on{KZ}}\subset \frak D^{(m)}_{\on{KZ}}$.

\end{thm}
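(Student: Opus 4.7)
The plan is to prove that $a \mapsto \dim_{\Q_p^{(m)}} \mc L_a$ is locally constant on $\frak D_{\on{KZ}}^{(m),o}$ by gauging the matrix $\mc I(z)$ by a local fundamental solution of the \KZ/ connection, thereby reducing the question to a linear matrix ODE of known structure. Fix $a_0 \in \frak D_{\on{KZ}}^{(m),o}$. Since $a_0$ lies off the diagonals, the Gaudin Hamiltonians $H_i(z)$ are analytic in some $p$-adic polydisk neighborhood of $a_0$; after possibly shrinking to a subpolydisk $D$, Picard iteration for linear systems with analytic coefficients produces an analytic invertible matrix solution $F(z) \in \mathrm{End}(W)$ of the \KZ/ system $\partial F/\partial z_i = H_i(z) F(z)$ on $D$.

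Write $\mc I(z) = F(z) U(z)$, where $U(z) := F(z)^{-1}\mc I(z)$ is analytic on $D$. Combining the derivative formula $\partial \mc I/\partial z_i = \mc I^{(i)} - \mc I \cdot \mc A^{(i)}$ of Theorem~\ref{thm coKZ} with the identity $\mc I^{(i)} = H_i \mc I$ of Theorem~\ref{thm KZ mc} gives
\[
\frac{\partial \mc I}{\partial z_i} \,=\, H_i \mc I \,-\, \mc I \cdot \mc A^{(i)}.
\]
Substituting $\mc I = F U$ and using $\partial F/\partial z_i = H_i F$ cancels the Gaudin term and leaves
\[
\frac{\partial U}{\partial z_i} \,=\, -\,U \cdot \mc A^{(i)}(z),
\]
a linear matrix ODE for $U$. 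On a (possibly smaller) polydisk $D' \subset D$ around $a_0$ I construct, again by Picard iteration, the unique analytic $g \times g$ matrix $G(z)$ with $G(a_0) = \mathrm{Id}_g$ and $\partial G/\partial z_i = -G\,\mc A^{(i)}(z)$. By Jacobi's formula, $\det G$ satisfies a first-order scalar linear ODE with initial value $1$, so $\det G(z)$ is a nowhere-vanishing $p$-adic exponential on $D'$ and $G(z)$ is invertible throughout~$D'$.

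Uniqueness of solutions for the linear system satisfied by $U$ now forces $U(z) = U(a_0)\, G(z)$ on $D'$, whence
\[
\on{rank}\mc I(z) \,=\, \on{rank}\,U(z) \,=\, \on{rank}\bigl(U(a_0)\, G(z)\bigr) \,=\, \on{rank}\,U(a_0) \,=\, \on{rank}\mc I(a_0)
\]
for every $z \in D'$, since $F(z)$ and $G(z)$ are invertible. This shows that $\dim \mc L_a$ is locally constant on $\frak D_{\on{KZ}}^{(m),o}$, which is exactly the vector bundle statement. The delicate point is the control of $p$-adic radii of convergence: linear ODEs over $\Q_p^{(m)}$ do not in general admit matrix solutions on the full unit polydisk, so $D$ and $D'$ must be chosen small enough for the Picard iterations yielding $F$ and $G$ to converge. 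Since only local constancy of the rank is required for the vector bundle conclusion, this local analysis suffices and no global information on the topology of $\frak D_{\on{KZ}}^{(m),o}$ is needed.
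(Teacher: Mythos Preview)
Your argument is essentially correct and is likely close in spirit to the proof in the cited reference \cite{VZ2} (the paper itself does not give a self-contained proof here, so a direct comparison is not possible). There is, however, one gap worth making explicit.

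You construct the $g\times g$ matrix $G(z)$ as a simultaneous solution of the $n$ equations $\partial G/\partial z_i = -G\,\mc A^{(i)}$, $i=1,\dots,n$. This is an \emph{overdetermined} system of PDEs, and Picard iteration alone does not produce a solution unless the integrability (zero-curvature) condition
\[
\frac{\partial \mc A^{(j)}}{\partial z_i} - \frac{\partial \mc A^{(i)}}{\partial z_j} \;=\; [\mc A^{(i)}, \mc A^{(j)}]
\]
holds. You do not verify this. It \emph{does} hold, and follows from Lemma~\ref{lem conv3}: in the notation there, $\partial_u \mc A_v = \mc A_{u,v} - \mc A_v \mc A_u$, and since $\mc A_{u,v}=\mc A_{v,u}$ (both are limits of $(\partial_u\partial_v A(\Phi_s))\,A(\Phi_s)^{-1}$), subtracting the two instances yields exactly the identity above. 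With this in hand your construction of $G$ is legitimate and the rest of the argument goes through. Alternatively, one can bypass $G$ entirely: from $\partial_i U = -U\,\mc A^{(i)}$ one sees by induction that every partial derivative of $U$ at $a_0$ has its columns in $\mathrm{Im}\,U(a_0)$, so $\mathrm{rank}\,U(z)\le \mathrm{rank}\,U(a_0)$ throughout the polydisc of convergence; combined with lower semicontinuity of rank this gives local constancy without ever invoking flatness of the right connection.

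A smaller remark: what you actually prove is \emph{local} constancy, which is precisely the vector-bundle assertion. The word ``constant'' in the statement should be read this way; the global value $g$ of the rank is established separately in Theorem~\ref{thm rk g} under an additional hypothesis on $p^m$.
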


The proof coincides with the proof of Theorem 6.8 in \cite{VZ2}.

\vsk.2>

Recall that $d_\Phi$ is the degree of the polynomial $\det A(\Phi_1(t,z))$ and
$d_M$ is the degree of the minor defined in Lemma \ref{lem minor}.

\begin{thm}
\label{thm rk g}

If $p^m> d_\Phi + d_M$, 
then the  analytic vector bundle $\mc L\to \frak D^{(m),o}_{\on{KZ}}$ is of rank $g$.

\end{thm}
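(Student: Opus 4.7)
The plan is as follows. First, note that $\mc I(a)$ is an $n \times g$ matrix, so $\dim_{\Q_p^{(m)}} \mc L_a \le g$ for every $a \in \frak D^{(m),o}_{\on{KZ}}$; by Theorem \ref{thm rk} this dimension is constant on $\frak D^{(m),o}_{\on{KZ}}$, so the task reduces to exhibiting a single point $a^\ast \in \frak D^{(m),o}_{\on{KZ}}$ at which $\mc I(a^\ast)$ has full rank $g$.

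To locate such a point, I would invoke Corollary \ref{cor mc I 1}, which yields $\mc I(a) \equiv I_1(a) \cdot A(\Phi_1(t,a))^{-1} \pmod{p}$ on $\frak D^{(m)}_{\on{KZ}}$. Since the determinant of $A(\Phi_1(t,a))$ has $p$-adic absolute value $1$ on $\frak D^{(m)}_{\on{KZ}}$, the factor $A(\Phi_1(t,a))^{-1}$ is invertible modulo $p$, and hence the rank of $\mc I(a)$ modulo $p$ coincides with the rank of $I_1(a)$ modulo $p$. By Lemma \ref{lem minor}, the explicit $g \times g$ minor $M(z)$ of $I_1(z)$ taken in the rows of indices $q(g-\ell)+1$, $\ell=1,\dots,g$, is a nonzero polynomial modulo $p$ of degree $d_M$. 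It therefore suffices to produce $a^\ast \in \frak D^{(m),o}_{\on{KZ}}$ with $M(a^\ast) \not\equiv 0 \pmod p$: at such a point the rank of $I_1(a^\ast)$ modulo $p$ is $g$, hence so is the rank of $\mc I(a^\ast)$, and Theorem \ref{thm rk} then propagates the conclusion to all of $\frak D^{(m),o}_{\on{KZ}}$.

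The existence of $a^\ast$ is secured by a Schwartz--Zippel-style argument of the type encoded in Lemma \ref{lem nonempty}. Consider the product
\[
F(z) \ :=\ \det A(\Phi_1(t,z)) \cdot M(z) \ \in\ \Z[z],
\]
of total degree $d_\Phi + d_M$; its reduction modulo $p$ is nonzero since each factor is, by Theorem \ref{thm F ne} and Lemma \ref{lem minor}. The hypothesis $p^m > d_\Phi + d_M$ is precisely what is needed to run Lemma \ref{lem nonempty} (with $k=1$, $d=d_\Phi+d_M$) and obtain a tuple $\vec u^\ast \in (\F_{p^m})^n$ with $\bar F(\vec u^\ast) \ne 0$. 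Setting $a^\ast = (\tilde u_1^\ast, \dots, \tilde u_n^\ast)$, the Teichm\"uller lifts, places $a^\ast$ in $\frak D^{(m)}_{\on{KZ}}$ with $M(a^\ast) \not\equiv 0 \pmod p$. The open condition $a^\ast_i \ne a^\ast_j$ is arranged by requiring the coordinates $u_i^\ast$ to be pairwise distinct in $\F_{p^m}$, which is compatible with the search thanks to the standing assumption $p \ge n+q-2 \ge n$; equivalently, one enlarges $F$ by the factor $\prod_{i<j}(z_i-z_j)$, an increment of degree $\binom{n}{2}$ that is absorbed under the standing inequalities on $p$ and $p^e$.

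The main (and essentially only) delicate point is this final Schwartz--Zippel application: one must arrange the simultaneous nonvanishing of $\det A(\Phi_1(t,z))$ and $M(z)$ together with the open (distinctness) condition inside the quantitative bound $p^m > d_\Phi + d_M$. Once this single point is produced, every remaining step is a direct assembly of results already established: Theorem \ref{thm rk} provides the rank-constancy, Corollary \ref{cor mc I 1} reduces the rank computation to the mod-$p$ reduction of $I_1$, and Lemma \ref{lem minor} supplies the nonvanishing minor.
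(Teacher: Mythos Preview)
Your approach is essentially the same as the paper's: locate a single point at which a specific $g\times g$ minor of $I_1$ is a $p$-adic unit using Lemma~\ref{lem nonempty} applied to the product $\det A(\Phi_1(t,z))\cdot M(z)$ of degree $d_\Phi+d_M$, transfer this to $\mc I$ via Corollary~\ref{cor mc I 1}, and then invoke rank-constancy (Theorem~\ref{thm rk}). The paper's proof is the same argument, written tersely.

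One small wobble: your treatment of the open condition $a_i\ne a_j$ is not quite justified. Adjoining the factor $\prod_{i<j}(z_i-z_j)$ raises the degree by $\binom{n}{2}$, and there is nothing in the standing hypotheses that guarantees $p^m>d_\Phi+d_M+\binom{n}{2}$; the phrase ``absorbed under the standing inequalities'' is not supported. The clean fix is simpler than either of your suggestions: the conditions $|\det A(\Phi_1(t,a))|_p=1$ and $|M(a)|_p=1$ depend only on the residues of the coordinates, hence are constant on each unit polydisc $D_{u_1,1}\times\cdots\times D_{u_n,1}$. Lemma~\ref{lem nonempty} produces such a polydisc inside $\frak D^{(m)}_{\on{KZ}}$ on which $M$ is a unit, and within that polydisc the diagonal hyperplanes are proper closed subsets, so one may choose $a^\ast$ with pairwise distinct coordinates. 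The paper glosses over this point as well, so your version is, if anything, more explicit.
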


\begin{proof}
If $p^m> d_\Phi + d_M$, then the minor $M(z)$ defines a function on 
$\frak D^{(m),o}_{\on{KZ}}$ nonzero modulo $p$ by  Lemma \ref{lem nonempty}.
Then by  Corollary \ref{cor mc I 1},
 the $n\times g$ matrix valued function
$\mc I(z)$ has a $g\times g$ minor nonzero on $\frak D^{(m)}_{\on{KZ}}$. 
This proves the theorem.
\end{proof}

\subsection{Remarks}

\subsubsection{} One may expect that the subbundle $\mc L\to \frak D^{(m),o}_{\on{KZ}}$
can be extended to a rank $g$ subbundle over 
$\frak D^{(m)}_{\on{KZ}} - \frak D^{(m),o}_{\on{KZ}}$, the union
 of the diagonal hyperplanes in 
$ \frak D^{(m)}_{\on{KZ}}$.

 \subsubsection{}
 Following Dwork we may expect that locally at any point $a\in\frak \frak D^{(m),o}_{\on{KZ}}$, the
 solutions of the KZ equations with values in $\mc L\to \frak D^{(m),o}_{\on{KZ}}$
 are given at $a$ by power series in 
$z_i-a_i$, $i=1,\dots,n$,  bounded in their polydiscs of convergence, 
while any other local solution at $a$ is given by a power series unbounded in its polydisc of convergence,
cf. \cite{Dw} and \cite[Theorem A.4]{V5}.

\subsubsection{}

The KZ connection $\nabla^{\on{KZ}}_i$, $i=1,\dots,n$, over $\C$ has no nontrivial proper invariant subbundles
due to the irreducibility of its monodromy representation, see \cite[Lemma 6]{Fo}.
 Thus the existence of the invariant subbundle $\mc L\to \frak D^{(m),o}_{\on{KZ}}$
 is a $p$-adic feature.

\subsubsection{}
The invariant subbundles of the KZ connection over $\C$ usually are related to some additional conformal block
constructions, for example see \cite{FSV, SV2, V3,  V4}. Apparently our subbundle  $\mc L\to \frak D^{(m),o}_{\on{KZ}}$
 is of a
different $p$-adic nature.

\bigskip

\end{document}